\newcommand{\sm}{\setminus}
\newcommand{\J}{{\mathcal J}}
\newcommand{\K}{{\mathcal K}}
\newcommand{\F}{{\mathcal F}}
\newcommand{\CC}{{\mathcal C}}
\newcommand{\R}{{\bf R}}
\newcommand{\Z}{{\bf Z}}
\newcommand{\thm}{\begin{theorem}}
\newcommand{\etheorem}{\end{theorem}}
\newcommand{\prop}{\begin{proposition}}
\newcommand{\eprop}{\end{proposition}}
\newtheorem{theorem}{Theorem}
\newtheorem{proposition}{Proposition}
\newtheorem{lemma}{Lemma}
\theoremstyle{definition}
\newtheorem{definition}{Definition}
\newtheorem{example}{Example}
\theoremstyle{remark}
\newtheorem{remark}{Remark}
\numberwithin{equation}{section}
\begin{document}

\title{On Finite Order Invariants of Triple Points Free
Plane Curves}

\author{V.A.~Vassiliev}
\address{Steklov Mathematical Inst.,
8 Gubkina str., Moscow 117966, GSP-1/Russia and Independent Moscow University}

\email{vva@mi.ras.ru}

\thanks{Supported in part by INTAS (project 96-0713) and
RFBR (project 98-01-00555)}

\subjclass{Primary 57Mxx, 06A09; Secondary 57R45}

\date{Revised version published in 1999}

\dedicatory{To D.~B.~Fuchs with love}

\begin{abstract}
We describe some regular techniques of calculating finite-order invariants of
triple points free smooth plane curves $S^1 \to \R^2$. They are a direct analog
of similar techniques for knot invariants and are based on the calculus of {\em
triangular diagrams} and {\em connected hypergraphs} in the same way as the
calculation of knot invariants is based on the study of chord diagrams and
connected graphs.

E.g., the simplest such invariant is of order 4 and corresponds to the
triangular diagram \unitlength=1.00mm \special{em:linewidth 0.4pt}
\linethickness{0.4pt}
\begin{picture}(3.33,4.34)
\put(0.67,1.67){\circle{5.33}} \put(0.67,-1.00){\line(-3,5){2.33}}
\put(-1.66,3.00){\line(1,0){4.33}} \put(2.67,3.00){\line(-1,-2){2.00}}
\put(0.67,4.34){\line(-1,-2){2.00}} \put(-1.33,0.34){\line(1,0){4.00}}
\put(2.67,0.34){\line(-1,2){2.00}}
\end{picture}
in the same way as the simplest knot invariant (of order 2) corresponds to the
2-chord diagram $\bigoplus$. Also, following V.~I.~Arnold and other authors we
consider invariants of {\em immersed} triple points free curves and describe
similar techniques also for this problem, and, more generally, for the
calculation of homology groups of the space of immersed plane curves without
points of multiplicity $\ge k$ for any $k \ge 3.$
\end{abstract}

\maketitle

\section*{Introduction}

The intensive study of invariants of generic immersions $S^1 \to \R^2$ was
started by V.~I.~Arnold in \cite{Arnold-20} and continued in
\cite{Arnold-MIAN}, \cite{Arnold-21}, \cite{Viro}, \cite{Ai}, \cite{Tabach},
\cite{Shum-1}, \cite{P}, \cite{Merx-4}, etc.

The most interesting invariant of such objects, the {\em strangeness}, is in
fact an invariant of triple points free immersions $S^1 \to \R^2$ (with allowed
self-tangencies).

Almost simultaneously, \cite{V-11}, \cite{V-94}, I considered the {\em
ornaments}, i.e. collections of plane curves (maybe with singularities) without
intersections of three different components, and developed regular techniques
for calculating their invariants. The present work is the (promised in
\cite{V-11}) substitution of these methods into the theory of triple points
free plane curves.

Below we describe a natural filtration of invariants by their {\em orders}, and
a regular method of calculating all invariants of all finite orders for triple
points free plane curves. Following the idea of \cite{A2}, we reduce the study
of invariants (and other cohomology classes of the space of generic objects) to
that of the homology groups of the complementary {\em discriminant set} of
objects with forbidden singularities (i.e., in our case, of curves with triple
points). The more technical tools of this method are the simplicial resolutions
of discriminants (see \cite{V-2}) and the (arising from them) calculus of {\em
triangular diagrams} and {\em connected hypergraphs}, which are analogs of
chord diagrams and connected graphs arising in the theory of finite-order knot
invariants.

The simplest such invariants are described in following two theorems.

First, as was proposed in \cite{V-11} (see Problem 2 of \S~9 there), we
consider the space of all plane curves $\phi: S^1 \to \R^2$ having no triple
points and no singularities obtained as degenerations of triple points (i.e.,
either the double points at which one of two local branches has a singular
point with $\phi' = 0$, or the points at which $\phi' = \phi'' = 0.)$ The
problem of classifying such objects (called the {\em doodles}) is in the same
relation with the classification of ornaments, in which the isotopy
classification of links is with the homotopy classification.

In this setting, the curves {\large ``$0$''} and {\large ``$8$''} become
equivalent, and the strangeness fails to be an invariant of such objects; this
is an analog of the fact that the trivial chord diagram $\ominus$ does not
define a knot invariant.

\thm \label{doodinv} There are no invariants of doodles of orders 1, 2 or 3,
and there is exactly one invariant of order 4. \etheorem

\begin{figure}
\begin{center}
\unitlength 1.00mm \linethickness{0.4pt}
\begin{picture}(21.00,21.67)
\multiput(1.00,11.00)(0.08,1.17){2}{\line(0,1){1.17}}
\multiput(1.16,13.34)(0.12,0.51){4}{\line(0,1){0.51}}
\multiput(1.63,15.38)(0.11,0.25){7}{\line(0,1){0.25}}
\multiput(2.41,17.09)(0.11,0.14){10}{\line(0,1){0.14}}
\multiput(3.50,18.50)(0.14,0.11){10}{\line(1,0){0.14}}
\multiput(4.91,19.59)(0.25,0.11){7}{\line(1,0){0.25}}
\multiput(6.63,20.38)(0.51,0.12){4}{\line(1,0){0.51}}
\multiput(8.66,20.84)(1.17,0.08){2}{\line(1,0){1.17}}
\multiput(11.00,21.00)(1.17,-0.08){2}{\line(1,0){1.17}}
\multiput(13.34,20.84)(0.51,-0.12){4}{\line(1,0){0.51}}
\multiput(15.38,20.38)(0.25,-0.11){7}{\line(1,0){0.25}}
\multiput(17.09,19.59)(0.14,-0.11){10}{\line(1,0){0.14}}
\multiput(18.50,18.50)(0.11,-0.14){10}{\line(0,-1){0.14}}
\multiput(19.59,17.09)(0.11,-0.25){7}{\line(0,-1){0.25}}
\multiput(20.38,15.38)(0.12,-0.51){4}{\line(0,-1){0.51}}
\multiput(20.84,13.34)(0.08,-1.17){2}{\line(0,-1){1.17}}
\multiput(21.00,11.00)(-0.08,-1.17){2}{\line(0,-1){1.17}}
\multiput(20.84,8.66)(-0.12,-0.51){4}{\line(0,-1){0.51}}
\multiput(20.38,6.63)(-0.11,-0.25){7}{\line(0,-1){0.25}}
\multiput(19.59,4.91)(-0.11,-0.14){10}{\line(0,-1){0.14}}
\multiput(18.50,3.50)(-0.14,-0.11){10}{\line(-1,0){0.14}}
\multiput(17.09,2.41)(-0.25,-0.11){7}{\line(-1,0){0.25}}
\multiput(15.38,1.63)(-0.51,-0.12){4}{\line(-1,0){0.51}}
\multiput(13.34,1.16)(-1.17,-0.08){2}{\line(-1,0){1.17}}
\multiput(11.00,1.00)(-1.17,0.08){2}{\line(-1,0){1.17}}
\multiput(8.66,1.16)(-0.51,0.12){4}{\line(-1,0){0.51}}
\multiput(6.63,1.63)(-0.25,0.11){7}{\line(-1,0){0.25}}
\multiput(4.91,2.41)(-0.14,0.11){10}{\line(-1,0){0.14}}
\multiput(3.50,3.50)(-0.11,0.14){10}{\line(0,1){0.14}}
\multiput(2.41,4.91)(-0.11,0.25){7}{\line(0,1){0.25}}
\multiput(1.63,6.63)(-0.12,0.51){4}{\line(0,1){0.51}}
\multiput(1.16,8.66)(-0.08,1.17){2}{\line(0,1){1.17}}
\put(2.33,6.00){\line(1,0){4.67}} \put(9.00,6.00){\line(1,0){10.33}}
\multiput(19.33,6.00)(-0.12,0.24){17}{\line(0,1){0.24}}
\multiput(16.00,12.00)(-0.12,0.21){42}{\line(0,1){0.21}}
\multiput(11.00,21.00)(-0.12,-0.22){20}{\line(0,-1){0.22}}
\multiput(7.67,15.33)(-0.12,-0.21){45}{\line(0,-1){0.21}}
\multiput(11.00,1.00)(0.12,0.20){20}{\line(0,1){0.20}}
\multiput(14.33,6.67)(0.12,0.21){45}{\line(0,1){0.21}}
\put(19.67,16.00){\line(-1,0){5.00}} \put(13.00,16.00){\line(-1,0){10.67}}
\multiput(2.33,16.00)(0.12,-0.22){20}{\line(0,-1){0.22}}
\multiput(5.67,10.00)(0.12,-0.20){45}{\line(0,-1){0.20}}
\put(2.00,6.00){\circle*{1.33}} \put(2.00,16.00){\circle*{1.33}}
\put(11.00,21.33){\circle*{1.33}} \put(20.00,16.00){\circle*{1.33}}
\put(20.00,6.00){\circle*{1.33}} \put(11.00,1.00){\circle*{1.33}}
\end{picture}
\end{center}
\caption{Unique invariant of order 4 for doodles} \label{md}
\end{figure}
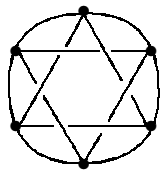

This invariant is depicted by the triangular diagram shown in Fig.~\ref{md}.

(This diagram is an adequate analog of the chord diagram $\bigoplus$ defining
the first nontrivial knot invariant: they both are simplest diagrams of
corresponding kinds, not containing elements with neighboring vertices.)

\begin{figure}
\begin{center}
\unitlength 1.00mm \linethickness{0.4pt}
\begin{picture}(40.00,40.00)
\multiput(20.00,40.00)(1.17,-0.08){2}{\line(1,0){1.17}}
\multiput(22.34,39.84)(0.51,-0.12){4}{\line(1,0){0.51}}
\multiput(24.38,39.38)(0.25,-0.11){7}{\line(1,0){0.25}}
\multiput(26.09,38.59)(0.14,-0.11){10}{\line(1,0){0.14}}
\multiput(27.50,37.50)(0.11,-0.14){10}{\line(0,-1){0.14}}
\multiput(28.59,36.09)(0.11,-0.25){7}{\line(0,-1){0.25}}
\multiput(29.38,34.38)(0.12,-0.51){4}{\line(0,-1){0.51}}
\multiput(29.84,32.34)(0.08,-1.17){2}{\line(0,-1){1.17}}
\multiput(20.00,40.00)(-1.17,-0.08){2}{\line(-1,0){1.17}}
\multiput(17.66,39.84)(-0.51,-0.12){4}{\line(-1,0){0.51}}
\multiput(15.63,39.38)(-0.25,-0.11){7}{\line(-1,0){0.25}}
\multiput(13.91,38.59)(-0.14,-0.11){10}{\line(-1,0){0.14}}
\multiput(12.50,37.50)(-0.11,-0.14){10}{\line(0,-1){0.14}}
\multiput(11.41,36.09)(-0.11,-0.25){7}{\line(0,-1){0.25}}
\multiput(10.63,34.38)(-0.12,-0.51){4}{\line(0,-1){0.51}}
\multiput(10.16,32.34)(-0.08,-1.17){2}{\line(0,-1){1.17}}
\multiput(10.00,30.00)(-1.17,-0.08){2}{\line(-1,0){1.17}}
\multiput(7.66,29.84)(-0.51,-0.12){4}{\line(-1,0){0.51}}
\multiput(5.63,29.38)(-0.25,-0.11){7}{\line(-1,0){0.25}}
\multiput(3.91,28.59)(-0.14,-0.11){10}{\line(-1,0){0.14}}
\multiput(2.50,27.50)(-0.11,-0.14){10}{\line(0,-1){0.14}}
\multiput(1.41,26.09)(-0.11,-0.25){7}{\line(0,-1){0.25}}
\multiput(0.63,24.38)(-0.12,-0.51){4}{\line(0,-1){0.51}}
\multiput(0.16,22.34)(-0.08,-1.17){2}{\line(0,-1){1.17}}
\multiput(0.00,20.00)(0.08,-1.17){2}{\line(0,-1){1.17}}
\multiput(0.16,17.66)(0.12,-0.51){4}{\line(0,-1){0.51}}
\multiput(0.63,15.63)(0.11,-0.25){7}{\line(0,-1){0.25}}
\multiput(1.41,13.91)(0.11,-0.14){10}{\line(0,-1){0.14}}
\multiput(2.50,12.50)(0.14,-0.11){10}{\line(1,0){0.14}}
\multiput(3.91,11.41)(0.25,-0.11){7}{\line(1,0){0.25}}
\multiput(5.63,10.63)(0.51,-0.12){4}{\line(1,0){0.51}}
\multiput(7.66,10.16)(1.17,-0.08){2}{\line(1,0){1.17}}
\multiput(10.00,10.00)(0.08,-1.17){2}{\line(0,-1){1.17}}
\multiput(10.16,7.66)(0.12,-0.51){4}{\line(0,-1){0.51}}
\multiput(10.63,5.63)(0.11,-0.25){7}{\line(0,-1){0.25}}
\multiput(11.41,3.91)(0.11,-0.14){10}{\line(0,-1){0.14}}
\multiput(12.50,2.50)(0.14,-0.11){10}{\line(1,0){0.14}}
\multiput(13.91,1.41)(0.25,-0.11){7}{\line(1,0){0.25}}
\multiput(15.63,0.63)(0.51,-0.12){4}{\line(1,0){0.51}}
\multiput(17.66,0.16)(1.17,-0.08){2}{\line(1,0){1.17}}
\multiput(20.00,0.00)(1.17,0.08){2}{\line(1,0){1.17}}
\multiput(22.34,0.16)(0.51,0.12){4}{\line(1,0){0.51}}
\multiput(24.38,0.63)(0.25,0.11){7}{\line(1,0){0.25}}
\multiput(26.09,1.41)(0.14,0.11){10}{\line(1,0){0.14}}
\multiput(27.50,2.50)(0.11,0.14){10}{\line(0,1){0.14}}
\multiput(28.59,3.91)(0.11,0.25){7}{\line(0,1){0.25}}
\multiput(29.38,5.63)(0.12,0.51){4}{\line(0,1){0.51}}
\multiput(29.84,7.66)(0.08,1.17){2}{\line(0,1){1.17}}
\multiput(30.00,10.00)(1.17,0.08){2}{\line(1,0){1.17}}
\multiput(32.34,10.16)(0.51,0.12){4}{\line(1,0){0.51}}
\multiput(34.38,10.63)(0.25,0.11){7}{\line(1,0){0.25}}
\multiput(36.09,11.41)(0.14,0.11){10}{\line(1,0){0.14}}
\multiput(37.50,12.50)(0.11,0.14){10}{\line(0,1){0.14}}
\multiput(38.59,13.91)(0.11,0.25){7}{\line(0,1){0.25}}
\multiput(39.38,15.63)(0.12,0.51){4}{\line(0,1){0.51}}
\multiput(39.84,17.66)(0.08,1.17){2}{\line(0,1){1.17}}
\multiput(40.00,20.00)(-0.08,1.17){2}{\line(0,1){1.17}}
\multiput(39.84,22.34)(-0.12,0.51){4}{\line(0,1){0.51}}
\multiput(39.38,24.38)(-0.11,0.25){7}{\line(0,1){0.25}}
\multiput(38.59,26.09)(-0.11,0.14){10}{\line(0,1){0.14}}
\multiput(37.50,27.50)(-0.14,0.11){10}{\line(-1,0){0.14}}
\multiput(36.09,28.59)(-0.25,0.11){7}{\line(-1,0){0.25}}
\multiput(34.38,29.38)(-0.51,0.12){4}{\line(-1,0){0.51}}
\multiput(32.34,29.84)(-1.17,0.08){2}{\line(-1,0){1.17}}
\put(10.00,10.00){\line(1,0){2.42}}
\multiput(12.42,10.08)(1.13,0.12){2}{\line(1,0){1.13}}
\multiput(14.69,10.31)(0.53,0.10){4}{\line(1,0){0.53}}
\multiput(16.80,10.70)(0.39,0.11){5}{\line(1,0){0.39}}
\multiput(18.75,11.25)(0.30,0.12){6}{\line(1,0){0.30}}
\multiput(20.55,11.95)(0.21,0.11){8}{\line(1,0){0.21}}
\multiput(22.19,12.81)(0.16,0.11){9}{\line(1,0){0.16}}
\multiput(23.67,13.83)(0.13,0.12){10}{\line(1,0){0.13}}
\multiput(25.00,15.00)(0.12,0.13){10}{\line(0,1){0.13}}
\multiput(26.17,16.33)(0.11,0.16){9}{\line(0,1){0.16}}
\multiput(27.19,17.81)(0.11,0.21){8}{\line(0,1){0.21}}
\multiput(28.05,19.45)(0.12,0.30){6}{\line(0,1){0.30}}
\multiput(28.75,21.25)(0.11,0.39){5}{\line(0,1){0.39}}
\multiput(29.30,23.20)(0.10,0.53){4}{\line(0,1){0.53}}
\multiput(29.69,25.31)(0.12,1.13){2}{\line(0,1){1.13}}
\put(29.92,27.58){\line(0,1){2.42}} \put(30.00,10.00){\line(0,1){2.42}}
\multiput(29.92,12.42)(-0.12,1.13){2}{\line(0,1){1.13}}
\multiput(29.69,14.69)(-0.10,0.53){4}{\line(0,1){0.53}}
\multiput(29.30,16.80)(-0.11,0.39){5}{\line(0,1){0.39}}
\multiput(28.75,18.75)(-0.12,0.30){6}{\line(0,1){0.30}}
\multiput(28.05,20.55)(-0.11,0.21){8}{\line(0,1){0.21}}
\multiput(27.19,22.19)(-0.11,0.16){9}{\line(0,1){0.16}}
\multiput(26.17,23.67)(-0.12,0.13){10}{\line(0,1){0.13}}
\multiput(25.00,25.00)(-0.13,0.12){10}{\line(-1,0){0.13}}
\multiput(23.67,26.17)(-0.16,0.11){9}{\line(-1,0){0.16}}
\multiput(22.19,27.19)(-0.21,0.11){8}{\line(-1,0){0.21}}
\multiput(20.55,28.05)(-0.30,0.12){6}{\line(-1,0){0.30}}
\multiput(18.75,28.75)(-0.39,0.11){5}{\line(-1,0){0.39}}
\multiput(16.80,29.30)(-0.53,0.10){4}{\line(-1,0){0.53}}
\multiput(14.69,29.69)(-1.13,0.12){2}{\line(-1,0){1.13}}
\put(12.42,29.92){\line(-1,0){2.42}} \put(30.00,30.00){\line(-1,0){2.42}}
\multiput(27.58,29.92)(-1.13,-0.12){2}{\line(-1,0){1.13}}
\multiput(25.31,29.69)(-0.53,-0.10){4}{\line(-1,0){0.53}}
\multiput(23.20,29.30)(-0.39,-0.11){5}{\line(-1,0){0.39}}
\multiput(21.25,28.75)(-0.30,-0.12){6}{\line(-1,0){0.30}}
\multiput(19.45,28.05)(-0.21,-0.11){8}{\line(-1,0){0.21}}
\multiput(17.81,27.19)(-0.16,-0.11){9}{\line(-1,0){0.16}}
\multiput(16.33,26.17)(-0.13,-0.12){10}{\line(-1,0){0.13}}
\multiput(15.00,25.00)(-0.12,-0.13){10}{\line(0,-1){0.13}}
\multiput(13.83,23.67)(-0.11,-0.16){9}{\line(0,-1){0.16}}
\multiput(12.81,22.19)(-0.11,-0.21){8}{\line(0,-1){0.21}}
\multiput(11.95,20.55)(-0.12,-0.30){6}{\line(0,-1){0.30}}
\multiput(11.25,18.75)(-0.11,-0.39){5}{\line(0,-1){0.39}}
\multiput(10.70,16.80)(-0.10,-0.53){4}{\line(0,-1){0.53}}
\multiput(10.31,14.69)(-0.12,-1.13){2}{\line(0,-1){1.13}}
\put(10.08,12.42){\line(0,-1){2.42}} \put(10.00,30.00){\line(0,-1){2.42}}
\multiput(10.08,27.58)(0.12,-1.13){2}{\line(0,-1){1.13}}
\multiput(10.31,25.31)(0.10,-0.53){4}{\line(0,-1){0.53}}
\multiput(10.70,23.20)(0.11,-0.39){5}{\line(0,-1){0.39}}
\multiput(11.25,21.25)(0.12,-0.30){6}{\line(0,-1){0.30}}
\multiput(11.95,19.45)(0.11,-0.21){8}{\line(0,-1){0.21}}
\multiput(12.81,17.81)(0.11,-0.16){9}{\line(0,-1){0.16}}
\multiput(13.83,16.33)(0.12,-0.13){10}{\line(0,-1){0.13}}
\multiput(15.00,15.00)(0.13,-0.12){10}{\line(1,0){0.13}}
\multiput(16.33,13.83)(0.16,-0.11){9}{\line(1,0){0.16}}
\multiput(17.81,12.81)(0.21,-0.11){8}{\line(1,0){0.21}}
\multiput(19.45,11.95)(0.30,-0.12){6}{\line(1,0){0.30}}
\multiput(21.25,11.25)(0.39,-0.11){5}{\line(1,0){0.39}}
\multiput(23.20,10.70)(0.53,-0.10){4}{\line(1,0){0.53}}
\multiput(25.31,10.31)(1.13,-0.12){2}{\line(1,0){1.13}}
\put(27.58,10.08){\line(1,0){2.42}}
\end{picture}
\end{center}
\caption{A plane curve not equivalent to the circle in the space of singular
triple points free curves (after A.~B.~Merkov)} \label{merx}
\end{figure}
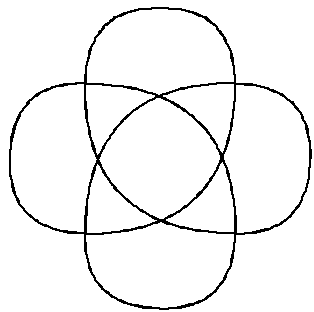

This invariant proves, in particular, that the curve of Fig.~\ref{merx}
(discovered previously by A.~B.~Merkov) is not equivalent to a circle.
\medskip

Further, following \cite{Arnold-20}, let us consider the immersed curves in
$\R^2.$

\thm \label{idoodinv} There are only the following invariants of orders $\le 4$
of triple points free plane {\bf immersed} curves $S^1 \to \R^2$:

1) no invariants of order 1;

2) one invariant of order 2 (the Arnold's {\em strangeness}; by some reasons we
denote this invariant by the simplest "triangular diagram" of Fig.~\ref{one}a;

3) one more invariant of order 3 (its natural notation see in Fig.~\ref{one}b;

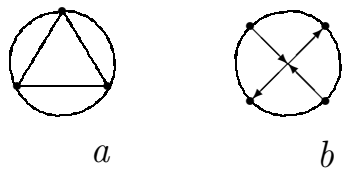
\begin{figure}
\begin{center}
\special{em:linewidth 0.4pt} \unitlength 1.00mm \linethickness{0.4pt}
\begin{picture}(46.00,21.67)
\multiput(10.00,21.00)(0.79,-0.09){2}{\line(1,0){0.79}}
\multiput(11.59,20.82)(0.30,-0.11){5}{\line(1,0){0.30}}
\multiput(13.09,20.28)(0.17,-0.11){8}{\line(1,0){0.17}}
\multiput(14.43,19.42)(0.11,-0.11){10}{\line(0,-1){0.11}}
\multiput(15.54,18.27)(0.12,-0.20){7}{\line(0,-1){0.20}}
\multiput(16.37,16.91)(0.10,-0.30){5}{\line(0,-1){0.30}}
\multiput(16.86,15.39)(0.07,-0.80){2}{\line(0,-1){0.80}}
\multiput(17.00,13.80)(-0.11,-0.79){2}{\line(0,-1){0.79}}
\multiput(16.77,12.22)(-0.12,-0.30){5}{\line(0,-1){0.30}}
\multiput(16.19,10.73)(-0.11,-0.16){8}{\line(0,-1){0.16}}
\multiput(15.29,9.42)(-0.13,-0.12){9}{\line(-1,0){0.13}}
\multiput(14.11,8.34)(-0.20,-0.11){7}{\line(-1,0){0.20}}
\multiput(12.72,7.55)(-0.38,-0.11){4}{\line(-1,0){0.38}}
\put(11.19,7.10){\line(-1,0){1.59}}
\multiput(9.60,7.01)(-0.52,0.09){3}{\line(-1,0){0.52}}
\multiput(8.03,7.28)(-0.25,0.10){6}{\line(-1,0){0.25}}
\multiput(6.56,7.90)(-0.16,0.12){8}{\line(-1,0){0.16}}
\multiput(5.27,8.84)(-0.12,0.13){9}{\line(0,1){0.13}}
\multiput(4.22,10.05)(-0.11,0.20){7}{\line(0,1){0.20}}
\multiput(3.48,11.46)(-0.10,0.39){4}{\line(0,1){0.39}}
\put(3.07,13.00){\line(0,1){1.60}}
\multiput(3.03,14.60)(0.11,0.52){3}{\line(0,1){0.52}}
\multiput(3.34,16.16)(0.11,0.24){6}{\line(0,1){0.24}}
\multiput(4.01,17.61)(0.11,0.14){9}{\line(0,1){0.14}}
\multiput(4.98,18.88)(0.14,0.11){9}{\line(1,0){0.14}}
\multiput(6.22,19.89)(0.24,0.12){6}{\line(1,0){0.24}}
\multiput(7.65,20.59)(0.59,0.10){4}{\line(1,0){0.59}}
\put(4.00,11.00){\line(1,0){12.00}}
\multiput(16.00,11.00)(-0.12,0.20){50}{\line(0,1){0.20}}
\multiput(10.00,21.00)(-0.12,-0.20){50}{\line(0,-1){0.20}}
\multiput(40.00,21.00)(0.79,-0.09){2}{\line(1,0){0.79}}
\multiput(41.59,20.82)(0.30,-0.11){5}{\line(1,0){0.30}}
\multiput(43.09,20.28)(0.17,-0.11){8}{\line(1,0){0.17}}
\multiput(44.43,19.42)(0.11,-0.11){10}{\line(0,-1){0.11}}
\multiput(45.54,18.27)(0.12,-0.20){7}{\line(0,-1){0.20}}
\multiput(46.37,16.91)(0.10,-0.30){5}{\line(0,-1){0.30}}
\multiput(46.86,15.39)(0.07,-0.80){2}{\line(0,-1){0.80}}
\multiput(47.00,13.80)(-0.11,-0.79){2}{\line(0,-1){0.79}}
\multiput(46.77,12.22)(-0.12,-0.30){5}{\line(0,-1){0.30}}
\multiput(46.19,10.73)(-0.11,-0.16){8}{\line(0,-1){0.16}}
\multiput(45.29,9.42)(-0.13,-0.12){9}{\line(-1,0){0.13}}
\multiput(44.11,8.34)(-0.20,-0.11){7}{\line(-1,0){0.20}}
\multiput(42.72,7.55)(-0.38,-0.11){4}{\line(-1,0){0.38}}
\put(41.19,7.10){\line(-1,0){1.59}}
\multiput(39.60,7.01)(-0.52,0.09){3}{\line(-1,0){0.52}}
\multiput(38.03,7.28)(-0.25,0.10){6}{\line(-1,0){0.25}}
\multiput(36.56,7.90)(-0.16,0.12){8}{\line(-1,0){0.16}}
\multiput(35.27,8.84)(-0.12,0.13){9}{\line(0,1){0.13}}
\multiput(34.22,10.05)(-0.11,0.20){7}{\line(0,1){0.20}}
\multiput(33.48,11.46)(-0.10,0.39){4}{\line(0,1){0.39}}
\put(33.07,13.00){\line(0,1){1.60}}
\multiput(33.03,14.60)(0.11,0.52){3}{\line(0,1){0.52}}
\multiput(33.34,16.16)(0.11,0.24){6}{\line(0,1){0.24}}
\multiput(34.01,17.61)(0.11,0.14){9}{\line(0,1){0.14}}
\multiput(34.98,18.88)(0.14,0.11){9}{\line(1,0){0.14}}
\multiput(36.22,19.89)(0.24,0.12){6}{\line(1,0){0.24}}
\multiput(37.65,20.59)(0.59,0.10){4}{\line(1,0){0.59}}
\put(45.00,2.00){\makebox(0,0)[cc]{{\large b}}}
\put(15.00,2.00){\makebox(0,0)[cc]{{\large a}}}
\put(40.00,14.00){\vector(1,1){4.67}} \put(44.67,9.33){\vector(-1,1){4.67}}
\put(40.00,14.00){\vector(-1,-1){4.67}} \put(35.33,18.67){\vector(1,-1){4.67}}
\put(4.00,11.00){\circle*{1.00}} \put(16.00,11.00){\circle*{1.00}}
\put(10.00,21.00){\circle*{1.00}} \put(35.00,19.00){\circle*{1.00}}
\put(35.00,9.00){\circle*{1.00}} \put(45.00,9.00){\circle*{1.00}}
\put(45.00,19.00){\circle*{1.00}}
\end{picture}
\end{center}
\caption{Notation for invariants of order 2 and 3 of immersed curves}
\label{one}
\end{figure}

4) five more invariants of order 4 (they are described in Fig.~\ref{two}).
\etheorem

\begin{figure}
\begin{center}
\unitlength 1.00mm \linethickness{0.4pt}
\begin{picture}(125.00,25.67)
\multiput(8.00,22.00)(0.79,-0.09){2}{\line(1,0){0.79}}
\multiput(9.59,21.82)(0.30,-0.11){5}{\line(1,0){0.30}}
\multiput(11.09,21.28)(0.17,-0.11){8}{\line(1,0){0.17}}
\multiput(12.43,20.42)(0.11,-0.11){10}{\line(0,-1){0.11}}
\multiput(13.54,19.27)(0.12,-0.20){7}{\line(0,-1){0.20}}
\multiput(14.37,17.91)(0.10,-0.30){5}{\line(0,-1){0.30}}
\multiput(14.86,16.39)(0.07,-0.80){2}{\line(0,-1){0.80}}
\multiput(15.00,14.80)(-0.11,-0.79){2}{\line(0,-1){0.79}}
\multiput(14.77,13.22)(-0.12,-0.30){5}{\line(0,-1){0.30}}
\multiput(14.19,11.73)(-0.11,-0.16){8}{\line(0,-1){0.16}}
\multiput(13.29,10.42)(-0.13,-0.12){9}{\line(-1,0){0.13}}
\multiput(12.11,9.34)(-0.20,-0.11){7}{\line(-1,0){0.20}}
\multiput(10.72,8.55)(-0.38,-0.11){4}{\line(-1,0){0.38}}
\put(9.19,8.10){\line(-1,0){1.59}}
\multiput(7.60,8.01)(-0.52,0.09){3}{\line(-1,0){0.52}}
\multiput(6.03,8.28)(-0.25,0.10){6}{\line(-1,0){0.25}}
\multiput(4.56,8.90)(-0.16,0.12){8}{\line(-1,0){0.16}}
\multiput(3.27,9.84)(-0.12,0.13){9}{\line(0,1){0.13}}
\multiput(2.22,11.05)(-0.11,0.20){7}{\line(0,1){0.20}}
\multiput(1.48,12.46)(-0.10,0.39){4}{\line(0,1){0.39}}
\put(1.07,14.00){\line(0,1){1.60}}
\multiput(1.03,15.60)(0.11,0.52){3}{\line(0,1){0.52}}
\multiput(1.34,17.16)(0.11,0.24){6}{\line(0,1){0.24}}
\multiput(2.01,18.61)(0.11,0.14){9}{\line(0,1){0.14}}
\multiput(2.98,19.88)(0.14,0.11){9}{\line(1,0){0.14}}
\multiput(4.22,20.89)(0.24,0.12){6}{\line(1,0){0.24}}
\multiput(5.65,21.59)(0.59,0.10){4}{\line(1,0){0.59}}
\put(2.00,18.00){\line(1,0){12.00}}
\multiput(14.00,18.00)(-0.18,0.12){34}{\line(-1,0){0.18}}
\multiput(8.00,22.00)(-0.18,-0.12){34}{\line(-1,0){0.18}}
\put(2.00,12.00){\line(1,0){12.00}}
\multiput(14.00,12.00)(-0.18,-0.12){34}{\line(-1,0){0.18}}
\multiput(8.00,8.00)(-0.18,0.12){34}{\line(-1,0){0.18}}
\multiput(28.00,22.00)(0.79,-0.09){2}{\line(1,0){0.79}}
\multiput(29.59,21.82)(0.30,-0.11){5}{\line(1,0){0.30}}
\multiput(31.09,21.28)(0.17,-0.11){8}{\line(1,0){0.17}}
\multiput(32.43,20.42)(0.11,-0.11){10}{\line(0,-1){0.11}}
\multiput(33.54,19.27)(0.12,-0.20){7}{\line(0,-1){0.20}}
\multiput(34.37,17.91)(0.10,-0.30){5}{\line(0,-1){0.30}}
\multiput(34.86,16.39)(0.07,-0.80){2}{\line(0,-1){0.80}}
\multiput(35.00,14.80)(-0.11,-0.79){2}{\line(0,-1){0.79}}
\multiput(34.77,13.22)(-0.12,-0.30){5}{\line(0,-1){0.30}}
\multiput(34.19,11.73)(-0.11,-0.16){8}{\line(0,-1){0.16}}
\multiput(33.29,10.42)(-0.13,-0.12){9}{\line(-1,0){0.13}}
\multiput(32.11,9.34)(-0.20,-0.11){7}{\line(-1,0){0.20}}
\multiput(30.72,8.55)(-0.38,-0.11){4}{\line(-1,0){0.38}}
\put(29.19,8.10){\line(-1,0){1.59}}
\multiput(27.60,8.01)(-0.52,0.09){3}{\line(-1,0){0.52}}
\multiput(26.03,8.28)(-0.25,0.10){6}{\line(-1,0){0.25}}
\multiput(24.56,8.90)(-0.16,0.12){8}{\line(-1,0){0.16}}
\multiput(23.27,9.84)(-0.12,0.13){9}{\line(0,1){0.13}}
\multiput(22.22,11.05)(-0.11,0.20){7}{\line(0,1){0.20}}
\multiput(21.48,12.46)(-0.10,0.39){4}{\line(0,1){0.39}}
\put(21.07,14.00){\line(0,1){1.60}}
\multiput(21.03,15.60)(0.11,0.52){3}{\line(0,1){0.52}}
\multiput(21.34,17.16)(0.11,0.24){6}{\line(0,1){0.24}}
\multiput(22.01,18.61)(0.11,0.14){9}{\line(0,1){0.14}}
\multiput(22.98,19.88)(0.14,0.11){9}{\line(1,0){0.14}}
\multiput(24.22,20.89)(0.24,0.12){6}{\line(1,0){0.24}}
\multiput(25.65,21.59)(0.59,0.10){4}{\line(1,0){0.59}}
\multiput(28.00,22.00)(-0.18,-0.12){34}{\line(-1,0){0.18}}
\multiput(22.00,18.00)(0.24,-0.12){50}{\line(1,0){0.24}}
\multiput(34.00,12.00)(-0.12,0.20){50}{\line(0,1){0.20}}
\multiput(34.00,18.00)(-0.12,-0.20){50}{\line(0,-1){0.20}}
\multiput(28.00,8.00)(-0.18,0.12){34}{\line(-1,0){0.18}}
\multiput(22.00,12.00)(0.24,0.12){50}{\line(1,0){0.24}}
\multiput(48.00,22.00)(0.79,-0.09){2}{\line(1,0){0.79}}
\multiput(49.59,21.82)(0.30,-0.11){5}{\line(1,0){0.30}}
\multiput(51.09,21.28)(0.17,-0.11){8}{\line(1,0){0.17}}
\multiput(52.43,20.42)(0.11,-0.11){10}{\line(0,-1){0.11}}
\multiput(53.54,19.27)(0.12,-0.20){7}{\line(0,-1){0.20}}
\multiput(54.37,17.91)(0.10,-0.30){5}{\line(0,-1){0.30}}
\multiput(54.86,16.39)(0.07,-0.80){2}{\line(0,-1){0.80}}
\multiput(55.00,14.80)(-0.11,-0.79){2}{\line(0,-1){0.79}}
\multiput(54.77,13.22)(-0.12,-0.30){5}{\line(0,-1){0.30}}
\multiput(54.19,11.73)(-0.11,-0.16){8}{\line(0,-1){0.16}}
\multiput(53.29,10.42)(-0.13,-0.12){9}{\line(-1,0){0.13}}
\multiput(52.11,9.34)(-0.20,-0.11){7}{\line(-1,0){0.20}}
\multiput(50.72,8.55)(-0.38,-0.11){4}{\line(-1,0){0.38}}
\put(49.19,8.10){\line(-1,0){1.59}}
\multiput(47.60,8.01)(-0.52,0.09){3}{\line(-1,0){0.52}}
\multiput(46.03,8.28)(-0.25,0.10){6}{\line(-1,0){0.25}}
\multiput(44.56,8.90)(-0.16,0.12){8}{\line(-1,0){0.16}}
\multiput(43.27,9.84)(-0.12,0.13){9}{\line(0,1){0.13}}
\multiput(42.22,11.05)(-0.11,0.20){7}{\line(0,1){0.20}}
\multiput(41.48,12.46)(-0.10,0.39){4}{\line(0,1){0.39}}
\put(41.07,14.00){\line(0,1){1.60}}
\multiput(41.03,15.60)(0.11,0.52){3}{\line(0,1){0.52}}
\multiput(41.34,17.16)(0.11,0.24){6}{\line(0,1){0.24}}
\multiput(42.01,18.61)(0.11,0.14){9}{\line(0,1){0.14}}
\multiput(42.98,19.88)(0.14,0.11){9}{\line(1,0){0.14}}
\multiput(44.22,20.89)(0.24,0.12){6}{\line(1,0){0.24}}
\multiput(45.65,21.59)(0.59,0.10){4}{\line(1,0){0.59}}
\multiput(48.00,22.00)(-0.12,-0.20){50}{\line(0,-1){0.20}}
\put(42.00,12.00){\line(1,0){12.00}}
\multiput(54.00,12.00)(-0.12,0.20){50}{\line(0,1){0.20}}
\put(42.00,18.00){\line(1,0){12.00}}
\multiput(54.00,18.00)(-0.12,-0.20){50}{\line(0,-1){0.20}}
\multiput(48.00,8.00)(-0.12,0.20){50}{\line(0,1){0.20}}
\put(28.00,2.00){\makebox(0,0)[cc]{{\large a}}}
\multiput(77.00,15.00)(0.08,1.17){2}{\line(0,1){1.17}}
\multiput(77.16,17.34)(0.12,0.51){4}{\line(0,1){0.51}}
\multiput(77.63,19.38)(0.11,0.25){7}{\line(0,1){0.25}}
\multiput(78.41,21.09)(0.11,0.14){10}{\line(0,1){0.14}}
\multiput(79.50,22.50)(0.14,0.11){10}{\line(1,0){0.14}}
\multiput(80.91,23.59)(0.25,0.11){7}{\line(1,0){0.25}}
\multiput(82.63,24.38)(0.51,0.12){4}{\line(1,0){0.51}}
\multiput(84.66,24.84)(1.17,0.08){2}{\line(1,0){1.17}}
\multiput(87.00,25.00)(1.17,-0.08){2}{\line(1,0){1.17}}
\multiput(89.34,24.84)(0.51,-0.12){4}{\line(1,0){0.51}}
\multiput(91.38,24.38)(0.25,-0.11){7}{\line(1,0){0.25}}
\multiput(93.09,23.59)(0.14,-0.11){10}{\line(1,0){0.14}}
\multiput(94.50,22.50)(0.11,-0.14){10}{\line(0,-1){0.14}}
\multiput(95.59,21.09)(0.11,-0.25){7}{\line(0,-1){0.25}}
\multiput(96.38,19.38)(0.12,-0.51){4}{\line(0,-1){0.51}}
\multiput(96.84,17.34)(0.08,-1.17){2}{\line(0,-1){1.17}}
\multiput(97.00,15.00)(-0.08,-1.17){2}{\line(0,-1){1.17}}
\multiput(96.84,12.66)(-0.12,-0.51){4}{\line(0,-1){0.51}}
\multiput(96.38,10.63)(-0.11,-0.25){7}{\line(0,-1){0.25}}
\multiput(95.59,8.91)(-0.11,-0.14){10}{\line(0,-1){0.14}}
\multiput(94.50,7.50)(-0.14,-0.11){10}{\line(-1,0){0.14}}
\multiput(93.09,6.41)(-0.25,-0.11){7}{\line(-1,0){0.25}}
\multiput(91.38,5.63)(-0.51,-0.12){4}{\line(-1,0){0.51}}
\multiput(89.34,5.16)(-1.17,-0.08){2}{\line(-1,0){1.17}}
\multiput(87.00,5.00)(-1.17,0.08){2}{\line(-1,0){1.17}}
\multiput(84.66,5.16)(-0.51,0.12){4}{\line(-1,0){0.51}}
\multiput(82.63,5.63)(-0.25,0.11){7}{\line(-1,0){0.25}}
\multiput(80.91,6.41)(-0.14,0.11){10}{\line(-1,0){0.14}}
\multiput(79.50,7.50)(-0.11,0.14){10}{\line(0,1){0.14}}
\multiput(78.41,8.91)(-0.11,0.25){7}{\line(0,1){0.25}}
\multiput(77.63,10.63)(-0.12,0.51){4}{\line(0,1){0.51}}
\multiput(77.16,12.66)(-0.08,1.17){2}{\line(0,1){1.17}}
\put(77.67,18.35){\circle*{1.33}} \put(96.33,18.35){\circle*{1.33}}
\put(87.00,25.00){\circle*{1.33}} \put(80.80,7.05){\circle*{1.33}}
\put(93.20,7.05){\circle*{1.33}}
\multiput(105.00,15.00)(0.08,1.17){2}{\line(0,1){1.17}}
\multiput(105.16,17.34)(0.12,0.51){4}{\line(0,1){0.51}}
\multiput(105.63,19.38)(0.11,0.25){7}{\line(0,1){0.25}}
\multiput(106.41,21.09)(0.11,0.14){10}{\line(0,1){0.14}}
\multiput(107.50,22.50)(0.14,0.11){10}{\line(1,0){0.14}}
\multiput(108.91,23.59)(0.25,0.11){7}{\line(1,0){0.25}}
\multiput(110.63,24.38)(0.51,0.12){4}{\line(1,0){0.51}}
\multiput(112.66,24.84)(1.17,0.08){2}{\line(1,0){1.17}}
\multiput(115.00,25.00)(1.17,-0.08){2}{\line(1,0){1.17}}
\multiput(117.34,24.84)(0.51,-0.12){4}{\line(1,0){0.51}}
\multiput(119.38,24.38)(0.25,-0.11){7}{\line(1,0){0.25}}
\multiput(121.09,23.59)(0.14,-0.11){10}{\line(1,0){0.14}}
\multiput(122.50,22.50)(0.11,-0.14){10}{\line(0,-1){0.14}}
\multiput(123.59,21.09)(0.11,-0.25){7}{\line(0,-1){0.25}}
\multiput(124.38,19.38)(0.12,-0.51){4}{\line(0,-1){0.51}}
\multiput(124.84,17.34)(0.08,-1.17){2}{\line(0,-1){1.17}}
\multiput(125.00,15.00)(-0.08,-1.17){2}{\line(0,-1){1.17}}
\multiput(124.84,12.66)(-0.12,-0.51){4}{\line(0,-1){0.51}}
\multiput(124.38,10.63)(-0.11,-0.25){7}{\line(0,-1){0.25}}
\multiput(123.59,8.91)(-0.11,-0.14){10}{\line(0,-1){0.14}}
\multiput(122.50,7.50)(-0.14,-0.11){10}{\line(-1,0){0.14}}
\multiput(121.09,6.41)(-0.25,-0.11){7}{\line(-1,0){0.25}}
\multiput(119.38,5.63)(-0.51,-0.12){4}{\line(-1,0){0.51}}
\multiput(117.34,5.16)(-1.17,-0.08){2}{\line(-1,0){1.17}}
\multiput(115.00,5.00)(-1.17,0.08){2}{\line(-1,0){1.17}}
\multiput(112.66,5.16)(-0.51,0.12){4}{\line(-1,0){0.51}}
\multiput(110.63,5.63)(-0.25,0.11){7}{\line(-1,0){0.25}}
\multiput(108.91,6.41)(-0.14,0.11){10}{\line(-1,0){0.14}}
\multiput(107.50,7.50)(-0.11,0.14){10}{\line(0,1){0.14}}
\multiput(106.41,8.91)(-0.11,0.25){7}{\line(0,1){0.25}}
\multiput(105.63,10.63)(-0.12,0.51){4}{\line(0,1){0.51}}
\multiput(105.16,12.66)(-0.08,1.17){2}{\line(0,1){1.17}}
\put(106.00,18.67){\circle*{1.33}} \put(124.00,18.67){\circle*{1.33}}
\put(115.00,25.00){\circle*{1.33}} \put(108.33,7.33){\circle*{1.33}}
\put(122.00,7.33){\circle*{1.33}} \put(87.00,25.00){\vector(1,-3){6.00}}
\put(93.00,6.67){\vector(-4,3){15.33}} \put(77.67,18.33){\vector(1,0){18.33}}
\put(96.00,18.33){\vector(-4,-3){15.33}} \put(80.67,6.67){\vector(1,3){6.00}}
\put(115.00,25.00){\vector(3,-2){9.33}}
\put(124.33,18.67){\vector(-1,-4){3.00}}
\put(121.33,7.33){\vector(-1,0){13.00}} \put(108.33,7.33){\vector(-1,4){3.00}}
\put(105.33,18.67){\vector(3,2){9.67}} \put(87.00,25.00){\vector(1,-3){5.00}}
\put(93.00,6.67){\vector(-4,3){12.33}} \put(77.67,18.33){\vector(1,0){15.33}}
\put(96.00,18.33){\vector(-4,-3){12.33}} \put(80.67,6.67){\vector(1,3){5.00}}
\put(115.00,25.00){\vector(3,-2){7.00}}
\put(124.33,18.67){\vector(-1,-4){2.33}}
\put(121.33,7.33){\vector(-1,0){11.00}} \put(108.33,7.33){\vector(-1,4){2.33}}
\put(105.33,18.67){\vector(3,2){9.67}}
\put(101.00,2.00){\makebox(0,0)[cc]{{\large b}}}
\put(8.00,22.00){\circle*{1.00}} \put(14.00,18.00){\circle*{1.00}}
\put(2.00,18.00){\circle*{1.00}} \put(2.00,12.00){\circle*{1.00}}
\put(14.00,12.00){\circle*{1.00}} \put(22.00,12.00){\circle*{1.00}}
\put(22.00,18.00){\circle*{1.00}} \put(28.00,22.00){\circle*{1.00}}
\put(28.00,8.00){\circle*{1.00}} \put(34.00,12.00){\circle*{1.00}}
\put(34.00,18.00){\circle*{1.00}} \put(42.00,18.00){\circle*{1.00}}
\put(42.00,12.00){\circle*{1.00}} \put(54.00,12.00){\circle*{1.00}}
\put(54.00,18.00){\circle*{1.00}} \put(48.00,22.00){\circle*{1.00}}
\put(48.00,8.00){\circle*{1.00}} \put(8.00,8.00){\circle*{1.00}}
\put(106.00,19.00){\vector(3,2){6.00}}
\end{picture}
\end{center}
\caption{Notation for invariants of order 4} \label{two}
\end{figure}
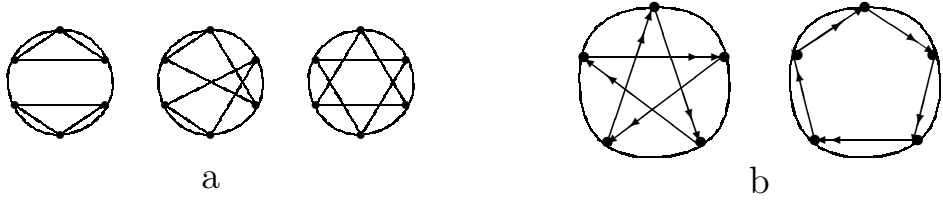

Our methods allow us to calculate also some higher-dimensional cohomology
classes of spaces of $k$-points free plane curves (both immersed or just
$C^\infty$-pa\-ra\-met\-ri\-zed) for any $k \ge 3.$

E.g., let $k=4.$ The set $\Sigma 4$ of all curves with 4-fold selfintersections
has codimension 2 in the space of all plane curves, thus the first interesting
problem is the calculation of the 1-dimensional cohomology group of the
complementary space of immersed plane curves without such points\footnote{The
problem of calculating such homology groups, posed by V.~I.~Arnold (see
\cite{Arnold-probl}, problem 1996-2) forced me to write this paper}.

This cohomology group $H^1(Imm(S^1, \R^2) \sm \Sigma 4)$ also has a natural
filtration, so that the {\em orders} of (some) its elements are well-defined.

\thm \label{4ptfree} For any connected component of the space of immersions
$S^1 \to \R^2$, the first few groups ${\mathcal F}_d$ of order $d$
1-dimensional integer cohomology classes of the space of four-points free
immersions lying in this component are as follows: $\F_1=\F_2 = \F_3 =\F_4=0,$
$\F_5 \simeq \Z^2.$

However, if we calculate the $\Z_2$-cohomology, then we have $\F_3 \simeq \Z_2,
$ $\F_4/\F_3 =0, $ $\F_5/\F_4 \simeq \Z^2_2,$ and if we calculate the
$\Z_5$-cohomology, then $\F_3=0, \F_4 \simeq \Z_5, \F_5/\F_4 \simeq \Z_5^2.$
\etheorem

Two generators of the group $\F_5/\F_4$ (with integer coefficients) are
naturally depicted by two chains shown in Fig.~\ref{hexagon}.

\begin{figure}
\begin{center}
\unitlength 1.00mm \linethickness{0.4pt}
\begin{picture}(61.00,22.67)
\multiput(1.00,12.00)(0.08,1.17){2}{\line(0,1){1.17}}
\multiput(1.16,14.34)(0.12,0.51){4}{\line(0,1){0.51}}
\multiput(1.63,16.38)(0.11,0.25){7}{\line(0,1){0.25}}
\multiput(2.41,18.09)(0.11,0.14){10}{\line(0,1){0.14}}
\multiput(3.50,19.50)(0.14,0.11){10}{\line(1,0){0.14}}
\multiput(4.91,20.59)(0.25,0.11){7}{\line(1,0){0.25}}
\multiput(6.63,21.38)(0.51,0.12){4}{\line(1,0){0.51}}
\multiput(8.66,21.84)(1.17,0.08){2}{\line(1,0){1.17}}
\multiput(11.00,22.00)(1.17,-0.08){2}{\line(1,0){1.17}}
\multiput(13.34,21.84)(0.51,-0.12){4}{\line(1,0){0.51}}
\multiput(15.38,21.38)(0.25,-0.11){7}{\line(1,0){0.25}}
\multiput(17.09,20.59)(0.14,-0.11){10}{\line(1,0){0.14}}
\multiput(18.50,19.50)(0.11,-0.14){10}{\line(0,-1){0.14}}
\multiput(19.59,18.09)(0.11,-0.25){7}{\line(0,-1){0.25}}
\multiput(20.38,16.38)(0.12,-0.51){4}{\line(0,-1){0.51}}
\multiput(20.84,14.34)(0.08,-1.17){2}{\line(0,-1){1.17}}
\multiput(21.00,12.00)(-0.08,-1.17){2}{\line(0,-1){1.17}}
\multiput(20.84,9.66)(-0.12,-0.51){4}{\line(0,-1){0.51}}
\multiput(20.38,7.63)(-0.11,-0.25){7}{\line(0,-1){0.25}}
\multiput(19.59,5.91)(-0.11,-0.14){10}{\line(0,-1){0.14}}
\multiput(18.50,4.50)(-0.14,-0.11){10}{\line(-1,0){0.14}}
\multiput(17.09,3.41)(-0.25,-0.11){7}{\line(-1,0){0.25}}
\multiput(15.38,2.63)(-0.51,-0.12){4}{\line(-1,0){0.51}}
\multiput(13.34,2.16)(-1.17,-0.08){2}{\line(-1,0){1.17}}
\multiput(11.00,2.00)(-1.17,0.08){2}{\line(-1,0){1.17}}
\multiput(8.66,2.16)(-0.51,0.12){4}{\line(-1,0){0.51}}
\multiput(6.63,2.63)(-0.25,0.11){7}{\line(-1,0){0.25}}
\multiput(4.91,3.41)(-0.14,0.11){10}{\line(-1,0){0.14}}
\multiput(3.50,4.50)(-0.11,0.14){10}{\line(0,1){0.14}}
\multiput(2.41,5.91)(-0.11,0.25){7}{\line(0,1){0.25}}
\multiput(1.63,7.63)(-0.12,0.51){4}{\line(0,1){0.51}}
\multiput(1.16,9.66)(-0.08,1.17){2}{\line(0,1){1.17}}
\put(2.33,7.00){\vector(1,0){17.00}} \put(19.33,7.00){\vector(-1,2){7.67}}
\put(11.67,22.00){\vector(-2,-3){10.00}} \put(2.33,17.00){\vector(1,0){17.33}}
\put(19.67,17.00){\vector(-1,-2){7.67}} \put(12.00,2.00){\vector(-2,3){10.00}}
\put(2.33,7.00){\vector(1,0){15.00}} \put(19.33,7.00){\vector(-1,2){6.67}}
\put(11.67,22.00){\vector(-2,-3){8.67}} \put(2.33,17.00){\vector(1,0){15.33}}
\put(19.67,17.00){\vector(-1,-2){6.67}} \put(12.00,2.00){\vector(-2,3){8.67}}
\multiput(41.00,12.00)(0.08,1.17){2}{\line(0,1){1.17}}
\multiput(41.16,14.34)(0.12,0.51){4}{\line(0,1){0.51}}
\multiput(41.63,16.38)(0.11,0.25){7}{\line(0,1){0.25}}
\multiput(42.41,18.09)(0.11,0.14){10}{\line(0,1){0.14}}
\multiput(43.50,19.50)(0.14,0.11){10}{\line(1,0){0.14}}
\multiput(44.91,20.59)(0.25,0.11){7}{\line(1,0){0.25}}
\multiput(46.63,21.38)(0.51,0.12){4}{\line(1,0){0.51}}
\multiput(48.66,21.84)(1.17,0.08){2}{\line(1,0){1.17}}
\multiput(51.00,22.00)(1.17,-0.08){2}{\line(1,0){1.17}}
\multiput(53.34,21.84)(0.51,-0.12){4}{\line(1,0){0.51}}
\multiput(55.38,21.38)(0.25,-0.11){7}{\line(1,0){0.25}}
\multiput(57.09,20.59)(0.14,-0.11){10}{\line(1,0){0.14}}
\multiput(58.50,19.50)(0.11,-0.14){10}{\line(0,-1){0.14}}
\multiput(59.59,18.09)(0.11,-0.25){7}{\line(0,-1){0.25}}
\multiput(60.38,16.38)(0.12,-0.51){4}{\line(0,-1){0.51}}
\multiput(60.84,14.34)(0.08,-1.17){2}{\line(0,-1){1.17}}
\multiput(61.00,12.00)(-0.08,-1.17){2}{\line(0,-1){1.17}}
\multiput(60.84,9.66)(-0.12,-0.51){4}{\line(0,-1){0.51}}
\multiput(60.38,7.63)(-0.11,-0.25){7}{\line(0,-1){0.25}}
\multiput(59.59,5.91)(-0.11,-0.14){10}{\line(0,-1){0.14}}
\multiput(58.50,4.50)(-0.14,-0.11){10}{\line(-1,0){0.14}}
\multiput(57.09,3.41)(-0.25,-0.11){7}{\line(-1,0){0.25}}
\multiput(55.38,2.63)(-0.51,-0.12){4}{\line(-1,0){0.51}}
\multiput(53.34,2.16)(-1.17,-0.08){2}{\line(-1,0){1.17}}
\multiput(51.00,2.00)(-1.17,0.08){2}{\line(-1,0){1.17}}
\multiput(48.66,2.16)(-0.51,0.12){4}{\line(-1,0){0.51}}
\multiput(46.63,2.63)(-0.25,0.11){7}{\line(-1,0){0.25}}
\multiput(44.91,3.41)(-0.14,0.11){10}{\line(-1,0){0.14}}
\multiput(43.50,4.50)(-0.11,0.14){10}{\line(0,1){0.14}}
\multiput(42.41,5.91)(-0.11,0.25){7}{\line(0,1){0.25}}
\multiput(41.63,7.63)(-0.12,0.51){4}{\line(0,1){0.51}}
\multiput(41.16,9.66)(-0.08,1.17){2}{\line(0,1){1.17}}
\put(51.00,22.00){\vector(-2,-1){9.00}} \put(51.00,22.00){\vector(2,-1){9.00}}
\put(60.00,7.00){\vector(0,1){10.00}} \put(60.00,7.00){\vector(-2,-1){9.00}}
\put(42.00,7.00){\vector(2,-1){9.00}} \put(42.00,7.00){\vector(0,1){9.67}}
\put(51.00,22.00){\vector(-2,-1){8.00}} \put(51.00,22.00){\vector(2,-1){8.00}}
\put(60.00,7.00){\vector(0,1){9.00}} \put(60.00,7.00){\vector(-2,-1){8.00}}
\put(42.00,7.00){\vector(2,-1){8.00}} \put(42.00,7.00){\vector(0,1){8.67}}
\put(51.00,2.33){\vector(0,1){19.33}} \put(51.00,2.33){\vector(0,1){18.33}}
\put(51.00,2.33){\vector(0,1){17.33}} \put(51.00,2.33){\vector(0,1){16.33}}
\put(60.00,16.67){\vector(-2,-1){17.67}}
\put(60.00,16.67){\vector(-2,-1){16.67}}
\put(60.00,16.67){\vector(-2,-1){15.67}}
\put(60.00,16.67){\vector(-2,-1){14.67}}
\put(42.33,16.67){\vector(2,-1){17.67}} \put(42.33,16.67){\vector(2,-1){16.67}}
\put(42.33,16.67){\vector(2,-1){15.67}} \put(42.33,16.67){\vector(2,-1){14.67}}
\put(2.23,7.00){\circle*{1.33}} \put(2.33,17.00){\circle*{1.33}}
\put(12.00,22.33){\circle*{1.33}} \put(19.67,17.00){\circle*{1.33}}
\put(19.67,7.00){\circle*{1.33}} \put(12.10,2.15){\circle*{1.33}}
\put(51.00,2.33){\circle*{1.33}} \put(60.10,7.50){\circle*{1.33}}
\put(60.00,16.50){\circle*{1.33}} \put(51.00,22.00){\circle*{1.33}}
\put(42.00,17.00){\circle*{1.33}} \put(42.00,7.67){\circle*{1.33}}
\end{picture}
\end{center}
\caption{Two generators of order 5 of the 1-cohomology group of the space of
plane immersed curves without 4-fold points} \label{hexagon}
\end{figure}
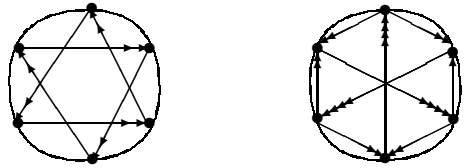

\medskip
Many invariants of immersions from Theorem 1 have elementary description.
Namely, the sum of three generators from Fig.~\ref{two}a is equal to the square
of the strangeness. Moreover, strangeness itself, the unique invariant of order
3 and the sum of two invariants of order 4 shown in Fig.~\ref{two}b, are
"index-type invariants," see \S~\ref{itype} below, thus initiating an infinite
series of finite-order invariants (one in each order) of this sort.
\medskip

{\sc Important Note.} Our notion of the order of invariants differs from the
one used in \cite{Arnold-20}---\cite{Arnold-21}, \cite{Shum-1}, \cite{Tabach}
etc. Any invariant of finite order $k$ in the sense of our work is also of
order $\le [k/2]$ in the sense of these works; the converse is false very much.

There are (among others) three equivalent definitions of finite order
invariants of knots in $\R^3$: 1) the "geometrical", in terms of resolved
discriminants and their filtrations, 2) the "axiomatic", in terms of finite
differences of knot diagrams, and 3) the "combinatorial" (developed in
\cite{PV}) in terms of homomorphisms of chord diagrams. The equivalence of two
first definitions was clear from the very beginning, their equivalence to the
third one is a nontrivial fact, conjectured by M.~Polyak and O.~Viro and proved
by M.~Goussarov.

There is a wide class of objects (including knots, ornaments, and doodles),
whose invariants can be calculated by the methods, developed in \cite{V-7},
\cite{V-11}, i.e. in the terms of resolved discriminants, thus leading to the
"geometrical" definition of finite-order invariants. An "axiomatic" elementary
reformulation of the resulting notion in our present situation also exists, but
it is not a straightforward translation of that from \cite{V-7}, see
\S~\ref{elemdef} below. I believe that it will lead to the most interesting
algebraic structures, reflecting the rich geometric structures staying behind
it.

The "combinatorial" definition and related aspects of the same invariants of
ornaments and doodles are introduced and investigated by A.~B.~Merkov,
\cite{Merx}---\cite{Merx-4} as a far generalization of the index-type
invariants from \cite{V-11}.

In particular, he proved that these invariants distinguish any two
nonequivalent collections of (arbitrarily many) plane curves without triple
intersections or selfintersections. However I believe that the techniques of
the present work allow to calculate all such invariants in the most direct and
regular way.
\medskip

I thank very much A.~B.~Merkov for numerous consultations and other multiform
help.

\section{Elementary theory}

This and the next sections are almost exact analogues of \S\S~1, 2 from
\cite{V-11}.

\subsection{First definitions and Reidemeister moves.}

\begin{definition}
A {\em doodle} is a $C^\infty$-map $\phi: S^1 \to \R^2$ such that for none
three different points $x,y,z \in S^1$ one the following conditions holds
\footnote{In a more general theory, see \cite{Khovanov}, \cite{Merx-4}, this
object is called an 1-doodle. We consider here only such one-component doodles
and call them simply {\em doodles}.}:
\begin{equation}
\label{main} \phi(x)=\phi(y)=\phi(z)
\end{equation}
\begin{equation}
\label{second} \phi'(x)=0, \phi(x) = \phi(y)
\end{equation}
\begin{equation}
\label{tert} \phi'(x)=\phi''(x)=0.
\end{equation}
An {\em I-doodle} (i.e. immersed doodle) is a doodle which is an immersion
(i.e. a map $\phi$ without degenerations of two types (\ref{main}) and
\begin{equation}
\label{imm} \phi'(x)=0 \ ).
\end{equation}

Two doodles (respectively, I-doodles) are {\em equivalent} if there is a
continuous family of doodles (I-doodles) connecting them. An {\em invariant} of
doodles or I-doodles is any function on the space of these objects, taking
equal values at equivalent objects.

A doodle is {\em regular} if it is an immersion having only transverse double
points.
\end{definition}

\prop \label{reidprop} Any equivalence class of doodles or I-doodles contains
regular doodles. Two regular doodles define equivalent doodles (respectively,
I-doodles) if and only if they can be transformed one into the other by a
finite sequence of isotopies of ${\bf R}^2$ (which do not change the
topological picture of the image of the doodle), and of local moves shown in
Fig.~\ref{reidem}a, b (respectively, \ref{reidem}a only). \eprop

\begin{figure}
\begin{center}
\unitlength 1.00mm \linethickness{0.4pt}
\begin{picture}(124.00,47.00)
\put(2.00,30.00){\line(1,0){25.00}}
\multiput(27.00,22.00)(-0.11,0.21){11}{\line(0,1){0.21}}
\multiput(25.77,24.26)(-0.11,0.18){11}{\line(0,1){0.18}}
\multiput(24.54,26.30)(-0.11,0.16){11}{\line(0,1){0.16}}
\multiput(23.32,28.10)(-0.11,0.14){11}{\line(0,1){0.14}}
\multiput(22.10,29.69)(-0.11,0.12){11}{\line(0,1){0.12}}
\multiput(20.89,31.05)(-0.12,0.11){10}{\line(-1,0){0.12}}
\multiput(19.68,32.18)(-0.15,0.11){8}{\line(-1,0){0.15}}
\multiput(18.47,33.09)(-0.20,0.11){6}{\line(-1,0){0.20}}
\multiput(17.27,33.78)(-0.30,0.11){4}{\line(-1,0){0.30}}
\multiput(16.08,34.23)(-0.60,0.12){2}{\line(-1,0){0.60}}
\put(14.89,34.47){\line(-1,0){1.19}}
\multiput(13.70,34.48)(-0.59,-0.11){2}{\line(-1,0){0.59}}
\multiput(12.52,34.26)(-0.29,-0.11){4}{\line(-1,0){0.29}}
\multiput(11.34,33.82)(-0.20,-0.11){6}{\line(-1,0){0.20}}
\multiput(10.17,33.15)(-0.15,-0.11){8}{\line(-1,0){0.15}}
\multiput(9.00,32.26)(-0.12,-0.11){10}{\line(-1,0){0.12}}
\multiput(7.83,31.14)(-0.12,-0.13){10}{\line(0,-1){0.13}}
\multiput(6.68,29.80)(-0.12,-0.16){10}{\line(0,-1){0.16}}
\multiput(5.52,28.23)(-0.12,-0.18){10}{\line(0,-1){0.18}}
\multiput(4.37,26.43)(-0.11,-0.20){10}{\line(0,-1){0.20}}
\multiput(3.22,24.41)(-0.11,-0.22){11}{\line(0,-1){0.22}}
\put(2.00,14.00){\line(1,0){25.00}}
\multiput(27.00,6.00)(-0.13,0.11){14}{\line(-1,0){0.13}}
\multiput(25.15,7.59)(-0.15,0.11){12}{\line(-1,0){0.15}}
\multiput(23.31,8.94)(-0.18,0.11){10}{\line(-1,0){0.18}}
\multiput(21.47,10.04)(-0.23,0.11){8}{\line(-1,0){0.23}}
\multiput(19.65,10.90)(-0.30,0.10){6}{\line(-1,0){0.30}}
\multiput(17.84,11.51)(-0.45,0.09){4}{\line(-1,0){0.45}}
\put(16.04,11.88){\line(-1,0){3.57}}
\multiput(12.47,11.88)(-0.44,-0.09){4}{\line(-1,0){0.44}}
\multiput(10.70,11.51)(-0.29,-0.10){6}{\line(-1,0){0.29}}
\multiput(8.94,10.90)(-0.22,-0.11){8}{\line(-1,0){0.22}}
\multiput(7.19,10.04)(-0.17,-0.11){10}{\line(-1,0){0.17}}
\multiput(5.45,8.94)(-0.14,-0.11){12}{\line(-1,0){0.14}}
\multiput(3.72,7.59)(-0.12,-0.11){14}{\line(-1,0){0.12}}
\put(14.00,2.00){\makebox(0,0)[cc]{{\large a}}}
\put(14.00,21.00){\makebox(0,0)[cc]{$\Updownarrow$}}
\put(37.00,11.00){\line(1,0){28.00}}
\put(50.00,18.00){\makebox(0,0)[cc]{$\Updownarrow$}}
\multiput(82.00,34.00)(0.12,-0.12){117}{\line(0,-1){0.12}}
\multiput(88.00,20.00)(0.12,0.12){117}{\line(0,1){0.12}}
\put(103.00,27.00){\line(-1,0){22.00}}
\multiput(88.00,15.00)(0.12,-0.12){117}{\line(0,-1){0.12}}
\multiput(96.00,15.00)(-0.12,-0.12){117}{\line(0,-1){0.12}}
\put(81.00,8.00){\line(1,0){22.00}}
\put(92.00,2.00){\makebox(0,0)[cc]{{\large c}}}
\put(50.00,2.00){\makebox(0,0)[cc]{{\large b}}}
\put(92.00,18.00){\makebox(0,0)[cc]{$\Updownarrow$}}
\multiput(116.00,34.00)(0.12,-0.12){67}{\line(0,-1){0.12}}
\multiput(116.00,26.00)(0.12,0.12){67}{\line(0,1){0.12}}
\put(120.00,26.00){\line(0,1){8.00}}
\put(116.00,32.00){\line(1,0){8.00}}
\multiput(116.00,24.00)(0.12,-0.12){67}{\line(0,-1){0.12}}
\multiput(116.00,16.00)(0.12,0.12){67}{\line(0,1){0.12}}
\put(120.00,16.00){\line(0,1){8.00}}
\put(116.00,20.00){\line(1,0){8.00}}
\multiput(116.00,14.00)(0.12,-0.12){67}{\line(0,-1){0.12}}
\multiput(116.00,6.00)(0.12,0.12){67}{\line(0,1){0.12}}
\put(120.00,6.00){\line(0,1){8.00}}
\put(116.00,8.00){\line(1,0){8.00}}
\put(120.00,2.00){\makebox(0,0)[cc]{{\large d}}}
\put(37.00,22.00){\line(1,0){3.60}}
\multiput(40.60,22.00)(1.61,0.07){2}{\line(1,0){1.61}}
\multiput(43.81,22.13)(0.94,0.09){3}{\line(1,0){0.94}}
\multiput(46.63,22.40)(0.61,0.10){4}{\line(1,0){0.61}}
\multiput(49.05,22.80)(0.41,0.11){5}{\line(1,0){0.41}}
\multiput(51.08,23.33)(0.27,0.11){6}{\line(1,0){0.27}}
\multiput(52.72,24.00)(0.18,0.11){7}{\line(1,0){0.18}}
\multiput(53.96,24.80)(0.11,0.12){8}{\line(0,1){0.12}}
\multiput(54.81,25.73)(0.11,0.27){4}{\line(0,1){0.27}}
\put(55.27,26.80){\line(0,1){1.20}}
\put(55.33,28.00){\line(0,1){2.43}}
\multiput(55.23,30.43)(-0.10,0.31){6}{\line(0,1){0.31}}
\multiput(54.60,32.28)(-0.11,0.13){10}{\line(0,1){0.13}}
\multiput(53.46,33.55)(-0.35,0.11){7}{\line(-1,0){0.35}}
\put(51.00,34.33){\line(-1,0){1.84}}
\multiput(49.16,34.41)(-0.28,-0.11){5}{\line(-1,0){0.28}}
\multiput(47.74,33.87)(-0.11,-0.13){9}{\line(0,-1){0.13}}
\multiput(46.73,32.71)(-0.12,-0.35){5}{\line(0,-1){0.35}}
\multiput(46.15,30.94)(-0.07,-1.47){2}{\line(0,-1){1.47}}
\multiput(46.00,28.00)(0.09,-0.64){2}{\line(0,-1){0.64}}
\multiput(46.18,26.73)(0.11,-0.22){5}{\line(0,-1){0.22}}
\multiput(46.70,25.60)(0.11,-0.12){8}{\line(0,-1){0.12}}
\multiput(47.58,24.63)(0.18,-0.12){7}{\line(1,0){0.18}}
\multiput(48.81,23.80)(0.26,-0.11){6}{\line(1,0){0.26}}
\multiput(50.39,23.12)(0.39,-0.11){5}{\line(1,0){0.39}}
\multiput(52.32,22.59)(0.57,-0.10){4}{\line(1,0){0.57}}
\multiput(54.61,22.20)(1.32,-0.12){2}{\line(1,0){1.32}}
\put(57.24,21.96){\line(1,0){2.99}}
\multiput(60.23,21.88)(2.39,0.06){2}{\line(1,0){2.39}}
\end{picture}
\end{center}
\caption{Standard moves of quasidoodles} \label{reidem}
\end{figure}
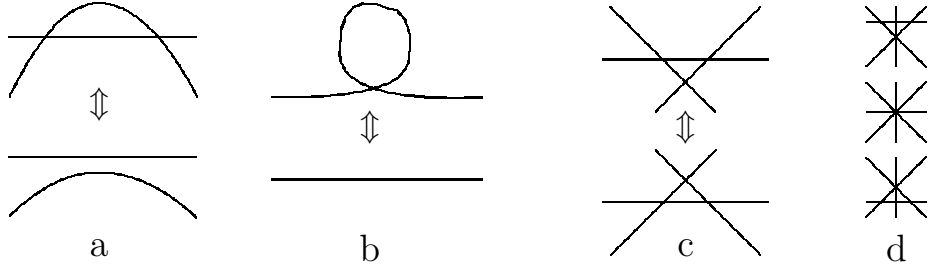

In other words, the move of Fig.~\ref{reidem}c is prohibited in the
classification of doodles, and both \ref{reidem}b, \ref{reidem}c in the case of
I-doodles.

The proof of this proposition is trivial.

\begin{definition} A {\em quasidoodle} is any $C^\infty$-map
$S^1 \to \R^2.$ The space of all such maps is denoted by $\K$. The space
$Imm(S^1,\R^2)$ of all immersions $S^1 \to \R^2$ will be denoted by $I\K$. The
{\em discriminant} (respectively, I-discriminant) $\Sigma \subset \K$
(respectively, $I\Sigma \subset I\K$) is the set of all maps from this space
for which one of prohibited conditions (\ref{main})---(\ref{tert})
(respectively, (\ref{main})) is satisfied.
\end{definition}

\prop The set $\Sigma$ is a closed subvariety of codimension 1 in $\K$. The set
of all maps, for which only (\ref{main}) is satisfied, is dense in $\Sigma,$
and the maps satisfying (\ref{second}) or (\ref{tert}) lie in its closure. The
set of quasidoodles having no degenerations of types (\ref{main}) and
(\ref{second}) but with allowed degenerations of type (\ref{tert}) is
path-connected in $\K.$ \eprop

The proof of the last assertion essentially coincides with that of the fact
that {\em all} embeddings $S^1 \to \R^3$ (maybe not regular) form a
path-connected subset in $C^\infty(S^1, \R^3),$ see e.g. \cite{CF}. All other
statements of the proposition are elementary.

\subsection{On Arnold's invariants of immersed plane curves.}

In \cite{Arnold-20} V.~I.~Arnold introduced three invariants of generic
immersed plane curves. One of them is the {\em strangeness}, defined as the
linking number in $I\K$ with the suitably (co)oriented variety $I\Sigma \subset
I\K.$ The coorientation of this variety, participating in this construction,
will be specified in \S~\ref{coorient}.

\subsection{Index-type invariants of I-doodles.}

\label{itype} Let us fix an orientation of the plane ${\bf R}^2.$

Recall that any closed oriented immersed curve $c$ in ${\bf R}^2$ defines an
integer-valued function $ind_c$ on its complement: for any point $t$ of the
complement, $ind_c(t)$ equals the (counterclockwise) rotation number of the
vector $(t,x)$ when $x$ runs one time along $c$.

Consider a regular I-doodle $\phi: S^1 \to \R^2$. To any self-intersection
point $x$ of the curve $c=\phi(S^1)$ assign its index $i(x)$ equal to the
arithmetical mean of four values of $ind_c$ in four neighboring components of
$\R^2 \sm c,$ see Fig.~\ref{ium}. Let us fix a regular (not intersection) point
$\ast $ in $c$ and define its index $i(\ast)$ as the greatest value of $ind_c$
in two neighboring domains of the complement of $c.$ For any selfintersection
point $x$ consider the frame in it, formed by (oriented) tangent vectors to
$c$. These vectors are ordered in correspondence with the number of visits of
$c$ after leaving the point $\ast$. Define the {\em sign} $\sigma(x)$ of the
point $x$ as the sign of the orientation of this ordered frame.

\begin{figure}
\begin{center}
\unitlength=1.00mm \special{em:linewidth 0.4pt} \linethickness{0.4pt}
\begin{picture}(16.00,16.00)
\put(15.00,9.00){\makebox(0,0)[cc]{$i$}}
\put(3.00,9.00){\makebox(0,0)[cc]{$i$}}
\put(9.00,16.00){\makebox(0,0)[cc]{$i+1$}}
\put(9.00,2.00){\makebox(0,0)[cc]{$i-1$}} \put(2.00,2.00){\vector(1,1){14.00}}
\put(2.00,16.00){\vector(1,-1){14.00}}
\put(5.00,13.00){\makebox(0,0)[cc]{{\large $\ast$}}}
\end{picture}
\end{center}
\caption{} \label{ium}
\end{figure}

For any integer $i$ and natural $\beta,$ denote by $\frac{i}{\beta}$ the number
$i(i-1) \cdots (i-\beta+1)/\beta!$, cf. \cite{Merx-2}, \cite{V-11}. It is easy
to see that this number is always integer.

For any natural $\beta,$ define the {\em $\beta$-th moment} $M(\beta)$ of the
regular doodle by the equality

\begin{equation}
\label{moment} M(\beta) = \sum_{x} {\sigma(x)} \binom{i(x)}{\beta} + 2
\binom{i(\ast)}{\beta+1} .
\end{equation}

\begin{proposition}[cf. \cite{Shum-3}, \cite{V-11}]
\label{mom} All numbers $M(\beta),$ $\beta=1,2, \ldots$, are invariants of
I-doodles, in particular do not depend on the choice of the distinguished point
$\ast.$ The first of them, $M(1),$ is the Arnold's strangeness.
\end{proposition}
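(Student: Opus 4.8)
The plan is to combine Proposition~\ref{reidprop} with two local computations. By that proposition, two regular I-doodles are equivalent if and only if they are related by ambient isotopies of $\R^2$ and by self-tangency moves of type Fig.~\ref{reidem}a. The quantity $M(\beta)$ is manifestly unchanged under ambient isotopies, since $ind_c$, the signs $\sigma(x)$, the indices $i(x)$ and $i(\ast)$ are all read off from the topological picture of $c=\phi(S^1)$ together with the combinatorial visit-order data. So it remains to check (a) invariance under a self-tangency move, and (b) independence of the auxiliary point $\ast$; well-definedness as an I-doodle invariant then follows. When doing (a) I would always first slide $\ast$ away from the region of the move, using (b).

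First I would prove independence of $\ast$. As $\ast$ runs along $c$ its two adjacent domains, and hence $i(\ast)=\max$ of their two $ind_c$-values, stay constant while $\ast$ traverses an arc free of self-intersections; the only events are the passages of $\ast$ through a double point $x_0$. Such a passage swaps the visit-order of the two branches at $x_0$ and at no other crossing, so it flips exactly one sign, $\sigma(x_0)\mapsto-\sigma(x_0)$, leaving every other $\sigma(x)$, every $i(x)$, and every far-away value of $ind_c$ untouched. A direct inspection at the standard crossing (as in Fig.~\ref{ium}) shows that the passage changes $i(\ast)$ by $\pm1$, and that, with the coorientation convention of \S\ref{coorient}, one has $i(x_0)=\min(i(\ast),i(\ast'))$ and $\sigma(x_0)=\mathrm{sign}\bigl(i(\ast')-i(\ast)\bigr)$. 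The total change of $M(\beta)$ is then $-2\sigma(x_0)\binom{i(x_0)}{\beta}+2\bigl(\binom{i(\ast')}{\beta+1}-\binom{i(\ast)}{\beta+1}\bigr)$, and the Pascal identity $\binom{m+1}{\beta+1}-\binom{m}{\beta+1}=\binom{m}{\beta}$ makes this vanish. This is precisely the role of the correction term $2\binom{i(\ast)}{\beta+1}$ in formula (\ref{moment}).

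Next comes invariance under a self-tangency. Such a move creates (or destroys) a pair of transverse double points $x_1,x_2$ bounding a thin lens, and it alters $ind_c$ only inside this lens; hence all crossings other than $x_1,x_2$, and (with $\ast$ moved aside) the term $2\binom{i(\ast)}{\beta+1}$, are unaffected. It therefore suffices to show that the two new points cancel, i.e.\ $\sigma(x_1)=-\sigma(x_2)$ while $i(x_1)=i(x_2)$. That $\sigma(x_1)=-\sigma(x_2)$ follows because the ordered tangent frame $(v_{\mathrm{first}},v_{\mathrm{second}})$ reverses its orientation between $x_1$ and $x_2$ (the two branches exchange sides along the lens), whereas the global visit-order of the two branches is the same at both points. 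For the indices I would use a local model of two tangent parabolas split apart: the lens, the two lateral regions, and the outer north/south regions take $ind_c$-values that are symmetric under $x_1\leftrightarrow x_2$, so averaging the four neighbours gives $i(x_1)=i(x_2)$; the same model handles both the direct and the inverse self-tangency. Hence $\sigma(x_1)\binom{i(x_1)}{\beta}+\sigma(x_2)\binom{i(x_2)}{\beta}=0$ and $M(\beta)$ is unchanged.

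Finally, for $\beta=1$ formula (\ref{moment}) reads $\sum_x\sigma(x)\,i(x)+i(\ast)\bigl(i(\ast)-1\bigr)$, which is the standard index/rotation-number expression for Arnold's strangeness (cf.\ \cite{Shum-3}, \cite{V-11}), identifying $M(1)=St$. The main obstacle I expect is the sign bookkeeping in step (b): arranging the coorientation and visit-order conventions so that the jump of $i(\ast)$ across $x_0$ points the correct way relative to $\sigma(x_0)$, and so that the factor $2$ and the shift $\beta\mapsto\beta+1$ in the correction term match the Pascal identity exactly. The binomial identity itself is immediate; feeding the right integers and signs into it from the geometry is the delicate part.
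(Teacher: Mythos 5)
Your proof is correct, and it is worth noting that the paper itself offers no proof of Proposition~\ref{mom} at all: it is stated with a ``cf.'' to \cite{Shum-3} and \cite{V-11}, where arguments of exactly your type live. What you supply is the standard Reidemeister-style verification, and its two halves are both sound: by Proposition~\ref{reidprop} it suffices to check the self-tangency move of Fig.~\ref{reidem}a and base-point independence; your base-point step correctly isolates the only events (passage of $\ast$ through a double point $x_0$, flipping exactly $\sigma(x_0)$ and shifting $i(\ast)$ by $\pm1$ with $i(x_0)=\min(i(\ast),i(\ast'))$), and the Pascal identity then kills the change --- one can check both cases $i(\ast')=i(\ast)\pm1$ against your relations and the cancellation is exact, which is indeed the whole point of Shumakovich's correction term $2\binom{i(\ast)}{\beta+1}$ in (\ref{moment}). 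The lens argument for the self-tangency move ($i(x_1)=i(x_2)$ by symmetry of the four surrounding index values, $\sigma(x_1)=-\sigma(x_2)$ because the branches exchange sides while the visit order is common to both crossings) is also right and handles direct and inverse tangencies uniformly.

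Two small caveats, neither a gap. First, your identity $\sigma(x_0)=\mathrm{sign}\bigl(i(\ast')-i(\ast)\bigr)$ holds for exactly one of the two possible conventions (orientation of $\R^2$ used for the frame sign, together with the first-visited/second-visited ordering); with the opposite convention all $\sigma(x)$ flip and the change of $M(\beta)$ would be $-4\binom{i(x_0)}{\beta}$ instead of $0$, so the convention is genuinely forced by the formula rather than free --- you flag this as the delicate point, correctly, but be aware that the resolution is ``the paper's conventions are whichever make your identity true,'' not a derivation from the text. Second, attributing that sign relation to ``the coorientation convention of \S~\ref{coorient}'' is slightly off: that section coorients the triple-point discriminant and says nothing directly about the frame sign at a double point; the needed convention is the one fixed in \S~\ref{itype} itself. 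Your identification $M(1)=St$ is at the same citation level as the paper's own assertion, which is appropriate here.
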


\begin{remark} In the very similar case of ornaments such invariants were
introduced in \cite{V-11}, \S~1.4, as first nontrivial examples of finite-order
invariants, see also \cite{Merx-2}. In \cite{Tabach} similar expressions
appeared as invariants of one-component {\em long curves}, i.e. essentially of
curves with a fixed nonsingular point $*$. The formulae for all these
invariants contained only the terms similar to the first term of the right-hand
part of (\ref{moment}). Finally, A.~Shumakovich \cite{Shum-3} introduced a
correcting second term and obtained invariants independent on the choice of
this point: these his invariants coincide with (\ref{moment}) up to a linear
transformation with rational coefficients. (The simplest version of this
correcting term, corresponding to the case $\beta=1$ and providing a
combinatorial expression for the Arnold's strangeness, appeared previously in
\cite{Shum-1}.) Numerous more general combinatorial expressions for invariants
of doodles, ornaments, I-doodles, etc. were given in
\cite{Merx}--\cite{Merx-4}. It seems likely that the method described below
(see also \cite{V-11}, \cite{Merx-2}) is the most universal algorithm for
guessing such expressions: first one calculates several first elements of a
spectral sequence converging to the group of all invariants of finite degree,
and then finds an elementary interpretation for them; cf. also \cite{V-7},
\cite{V-94}.
\end{remark}

\subsection{Coorientation of the discriminant.}
\label{coorient}

The discriminant set $\Sigma$ has a natural (co)orientation in its regular
points: if we go along a generic path in the space $\K$ and traverse the
discriminant, doing the local surgery shown in Fig.~\ref{reidem}c, then there
is an invariant way to say, which one of these two resolved pictures lies on
the positive side of the discriminant, and which on the negative. There are
numerous equivalent definitions of this coorientation, see e.g.
\cite{Arnold-20}, \cite{Shum-1}, \cite{Merx-2}. One of them can be formulated
as follows: we consider the sum like (\ref{moment}) with $\beta=1,$ but with
summation only over 3 points participating in the surgery. The positive
(negative) side of discriminant is that with the greater (smaller) value of
this sum.

This coorientation is well defined even if the curve has forbidden multiple
(or, in the theory of I-doodles, forbidden singular) points far away from the
location of the surgery: in fact, this is a coorientation of the locally
irreducible branch of the discriminant set.

\section{Elementary definition of the order of invariants}
\label{elemdef}

\subsection{Degeneration modes and characteristic numbers.}

The orders of invariants of doodles and I-doodles will be defined in the same
way.

\begin{definition} Suppose that $j$ is a natural number, $j \ge 2.$
A {\it degree $j$ standard singularity\/} of doodles is a pair of the form \{a
quasidoodle $\phi:S^1 \to \R^2$; a point $x \in \R^2$\} such that
$\phi^{-1}(x)$ consists of exactly $j+1$ points $z_1, \ldots, z_{j+1}$, the map
$\phi$ close to all these points is an immersion, and the corresponding $j+1$
local branches of the curve $\phi(S^1)$ are pairwise nontangent at $x$. A
quasidoodle is called a {\it regular quasidoodle of complexity $i$,} if it is
an immersion, all its forbidden points (i.e. the points, at which at least
three different components meet) are standard singular points, and the sum of
degrees of these singularities is equal to $i$.
\end{definition}

Any regular quasidoodle can be obtained from regular doodles by a sequence of
elementary degenerations. Namely, first we move along a generic path in the
space $\K$, up to the first instant when some three points of $\phi(S^1)$ meet
at the same point, forming a regular singularity of degree 2 (we do not watch
the surgeries shown if Figs.~\ref{reidem}a and \ref{reidem}b). Then we consider
the vector subspace in $\K,$ consisting of maps gluing together these three
points of $S^1,$ and go along a generic path in it; at some instant either
another triple point occurs or a fourth branch joins these three. Again, we fix
the smaller subspace, consisting of maps gluing together all the same points,
and move inside it. On the third step a point of multiplicity 5 can occur, or
two points of multiplicities 4 and 3, of 3 points of multiplicity 3, etc. (In
this case we do not watch also the nonessential local moves like the one shown
in Fig.~\ref{noness}.) At the last step we get our quasidoodle.

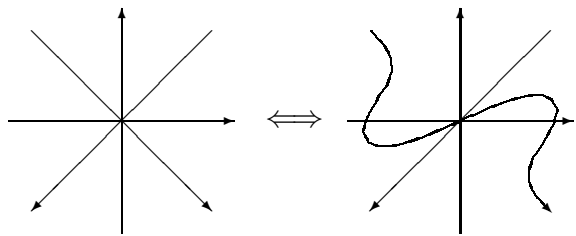
\begin{figure}
\begin{center}
\unitlength 1.00mm \linethickness{0.4pt}
\begin{picture}(75.00,30.00)
\put(0.00,15.00){\vector(1,0){30.00}} \put(15.00,0.00){\vector(0,1){30.00}}
\put(27.00,27.00){\vector(-1,-1){24.00}} \put(3.00,27.00){\vector(1,-1){24.00}}
\put(38.00,15.00){\makebox(0,0)[cc]{$\Longleftrightarrow$}}
\put(45.00,15.00){\vector(1,0){30.00}} \put(60.00,0.00){\vector(0,1){30.00}}
\put(72.00,27.00){\vector(-1,-1){24.00}}
\multiput(49.00,18.00)(-0.12,-0.21){12}{\line(0,-1){0.21}}
\multiput(47.59,15.53)(-0.11,-0.37){5}{\line(0,-1){0.37}}
\multiput(47.05,13.68)(0.12,-0.21){8}{\line(0,-1){0.21}}
\multiput(48.00,12.00)(0.24,-0.10){4}{\line(1,0){0.24}}
\put(48.97,11.61){\line(1,0){1.48}}
\multiput(50.45,11.62)(0.50,0.10){4}{\line(1,0){0.50}}
\multiput(52.44,12.03)(0.36,0.12){7}{\line(1,0){0.36}}
\multiput(54.94,12.85)(0.28,0.12){18}{\line(1,0){0.28}}
\put(72.00,3.00){\vector(1,-1){0.20}}
\multiput(71.00,12.00)(0.12,0.21){12}{\line(0,1){0.21}}
\multiput(72.41,14.47)(0.11,0.37){5}{\line(0,1){0.37}}
\multiput(72.95,16.32)(-0.12,0.21){8}{\line(0,1){0.21}}
\multiput(72.00,18.00)(-0.24,0.10){4}{\line(-1,0){0.24}}
\put(71.03,18.39){\line(-1,0){1.48}}
\multiput(69.55,18.38)(-0.50,-0.10){4}{\line(-1,0){0.50}}
\multiput(67.56,17.97)(-0.36,-0.12){7}{\line(-1,0){0.36}}
\multiput(65.06,17.15)(-0.28,-0.12){18}{\line(-1,0){0.28}}
\multiput(71.00,12.00)(-0.12,-0.16){11}{\line(0,-1){0.16}}
\multiput(69.69,10.27)(-0.12,-0.35){5}{\line(0,-1){0.35}}
\multiput(69.10,8.54)(0.06,-0.87){2}{\line(0,-1){0.87}}
\multiput(69.22,6.81)(0.12,-0.25){7}{\line(0,-1){0.25}}
\multiput(70.05,5.08)(0.11,-0.12){17}{\line(0,-1){0.12}}
\multiput(49.00,18.00)(0.12,0.16){11}{\line(0,1){0.16}}
\multiput(50.31,19.73)(0.12,0.35){5}{\line(0,1){0.35}}
\multiput(50.90,21.46)(-0.06,0.87){2}{\line(0,1){0.87}}
\multiput(50.78,23.19)(-0.12,0.25){7}{\line(0,1){0.25}}
\multiput(49.95,24.92)(-0.11,0.12){17}{\line(0,1){0.12}}
\end{picture}
\end{center}
\caption{Nonessential move of a complicated multiple point} \label{noness}
\end{figure}

Any such sequence of paths is called the {\em degeneration process} of our
regular quasidoodle.
\medskip

\noindent {\bf Examples.} If our quasidoodle has only one triple point, then it
can be obtained by two essentially different processes, corresponding to two
its resolutions shown in Fig.~\ref{reidem}c.

A quasidoodle with one singular point of multiplicity 4 has $16= 4\times 2
\times 2$ essentially different degeneration processes: at the first step any 3
of 4 points can meet at the same point of $\R^2$ (and this can be done in two
different ways, see Fig.~\ref{reidem}c), and on the second the fourth point
joins them in one of two different ways shown in Fig.~\ref{reidem}d.

If our quasidoodle has one more point of multiplicity 3, then there are $96 = 4
\times 3\times 2^3$ different degeneration processes: the points of the second
group can meet before, after, or between of two steps of degeneration of the
first group.

Any degeneration process of a regular quasidoodle and
 any invariant of doodles (or I-doodles)
defines a {\it characteristic number}, cf. \S~2 of \cite{V-11}. Namely, if we
have a quasidoodle of some complexity $j$ and a degeneration process $DP$ of
it, then there are exactly two quasidoodles of smaller complexity, whose
degeneration processes coincide with $DP$ without its last step. If at this
last step some new group of multiplicity 3 occurs, then these are two
resolutions of this group shown in Fig.~\ref{reidem}c; if at this step one
branch of $\phi(S^1)$ joined an existing group of multiplicity $\ge 3,$ then we
can move this branch to exactly two sides from this point, see
Fig.~\ref{reidem}d.

In all cases these two resolutions are ordered, i.e. one of them can be called
positive and the other negative. In the first case this order is described in
\S~\ref{coorient}.

In the second we define the index of a multiple point as the arithmetical mean
of indices of points from all neighboring components of the complement of
$\phi(S^1)$, and call {\em positive} the side for which the close point of
multiplicity $\ge 3$ has greater index with respect to the curve.

The characteristic number, which an invariant defines at the pair \{a regular
quasidoodle, some its degeneration process\} is equal to the difference of
similar numbers at two corresponding one-step resolutions (at the positive
minus at the negative one) supplied with the same degeneration process less its
last step. (For these easier singularities these characteristic numbers are
well defined by the inductive conjecture.)

\begin{definition}
An invariant is {\em of order} $j$ if for any quasidoodle, whose complexity is
greater than $j$, and any its degeneration process the corresponding
characteristic number is equal to 0.
\end{definition}

\begin{proposition}[cf. Theorem 4 in \cite{V-11}]
\label{ordofind} Any index-type invariant $M(\beta)$ from \S~\ref{itype} is of
order $\beta+1$. \quad $\Box$
\end{proposition}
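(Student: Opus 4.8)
The plan is to verify the defining property of order directly: I must show that for any regular quasidoodle of complexity $> \beta+1$, and any degeneration process of it, the characteristic number associated to $M(\beta)$ vanishes. By the inductive structure of the complexity filtration, it suffices to understand how the last step of a degeneration process changes the value of $M(\beta)$ on the two one-step resolutions, and to check that this difference vanishes once the total complexity is large enough. So the first step is to fix a quasidoodle of complexity $j$ together with a degeneration process $DP$, and to write the characteristic number as the finite difference of $M(\beta)$ across the last elementary surgery.

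There are two cases to analyze, matching the two kinds of last step described in \S~\ref{elemdef}. In the first case a new group of multiplicity $3$ appears, and the two resolutions differ by the triple-point surgery of Fig.~\ref{reidem}c; the difference of the two values of $M(\beta)$ is then controlled by how the summands $\sigma(x)\binom{i(x)}{\beta}$ change for the three self-intersection points participating in the surgery (the correcting term $2\binom{i(\ast)}{\beta+1}$ being unaffected if $\ast$ is kept away from the surgery). In the second case one branch joins an existing multiple group, and the two resolutions differ by moving the branch to the two sides, as in Fig.~\ref{reidem}d; here the relevant quantity is the difference of the index function $ind_c$ across the moved branch, which changes $i(x)$ by a controlled amount for the self-intersection points created. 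The key computational input is the elementary identity $\binom{i+1}{\beta} - \binom{i}{\beta} = \binom{i}{\beta-1}$, so that each single surgery lowers the effective degree in the index variable by one.

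The heart of the argument is then a degree-counting estimate. First I would show that a single elementary step of the degeneration process replaces the polynomial expression $\binom{i(x)}{\beta}$ (of degree $\beta$ in the index) by its first finite difference, an expression of degree $\beta-1$. Iterating, after $k$ successive steps the characteristic number is expressed through $k$-th finite differences of the functions $\binom{i}{\beta}$ and $\binom{i}{\beta+1}$ appearing in (\ref{moment}). Since $\binom{i}{\beta}$ has degree $\beta$ and $\binom{i}{\beta+1}$ has degree $\beta+1$ as polynomials in $i$, taking more than $\beta+1$ finite differences annihilates both terms. A quasidoodle of complexity $> \beta+1$ forces the degeneration process to have more than $\beta+1$ steps affecting the relevant index data, so the corresponding iterated finite difference vanishes, giving the characteristic number $0$ and proving that $M(\beta)$ has order at most $\beta+1$. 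To see that the order is exactly $\beta+1$ (not smaller), I would exhibit a quasidoodle of complexity $\beta+1$ on which the characteristic number is nonzero, which follows from the nondegeneracy of the leading $(\beta+1)$-st finite difference of $\binom{i}{\beta+1}$.

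The main obstacle I anticipate is bookkeeping the index function through the two types of surgery in a way that is genuinely invariant of the choices. In particular, in the triple-point case one must check that the sign conventions $\sigma(x)$ and the coorientation of the discriminant fixed in \S~\ref{coorient} are compatible, so that the difference of the two resolutions really assembles into a clean finite difference of the binomial summands rather than into some correction-ridden combination; and one must verify that the second term $2\binom{i(\ast)}{\beta+1}$, whose role is precisely to make $M(\beta)$ independent of $\ast$ (Proposition \ref{mom}), contributes correctly to the finite-difference count when the distinguished point happens to interact with the degeneration. Once these index-change computations are pinned down case by case, the degree argument is routine, and the remaining verification that the order is not smaller than $\beta+1$ reduces to a single explicit local model.
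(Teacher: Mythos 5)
There is a genuine gap, and it sits exactly at the degree count on which your whole argument rests. The number of elementary steps in a degeneration process is not the complexity: a process for an $A$-configuration has $|A|-2\#A$ steps, i.e.\ complexity minus the number of groups. So for a single group of $k$ points (complexity $k-1$), a quasidoodle of complexity $\beta+2$ -- the first complexity where vanishing must be proved -- admits a process with exactly $\beta+1$ steps, not more than $\beta+1$ as you assert. Since you budget the polynomial degree as $\beta+1$ (counting the correcting term $2\binom{i(\ast)}{\beta+1}$), you are left with $\beta+1$ finite differences of a degree-$(\beta+1)$ expression, which is a possibly nonzero constant; your argument therefore proves at best order $\beta+2$. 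The repair requires two facts you do not establish: first, the correcting term contributes nothing to any characteristic number, because by Proposition~\ref{mom} one may compute with $\ast$ fixed far from the degeneration, where $i(\ast)$ is unchanged by every resolution -- so the relevant degree is $\beta$, not $\beta+1$; second, the very first (triple-point) step must be shown to drop the degree from $\beta$ to $\beta-1$ by an explicit cancellation of the top-degree terms among the three signed summands $\sigma(x)\binom{i(x)}{\beta}$ participating in the surgery of Fig.~\ref{reidem}c -- this is a genuine local computation with the coorientation of \S~\ref{coorient}, not a single application of the Pascal identity. With these, the one-group count closes: degree $\beta-1$ after the triple point, then $k-3$ unit differences, vanishing exactly when $k-1>\beta+1$.

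Even so repaired, pure degree counting cannot handle configurations with several groups, and these are where your step count fails worst: $A=(3,3)$ has complexity $4$ but its processes have only $2$ steps (so for $\beta\ge 2$ nothing vanishes by counting), $A=(3,3,3)$ has complexity $6$ and only $3$ steps, and so on. The missing idea is locality, not iterated one-variable differencing: the jump of $M(\beta)$ localized at one group is a universal polynomial in the index of that group alone (the summands of all distant crossing points cancel in the difference), while resolving a different group is a surgery inside a small disc that changes neither the index function outside that disc nor the signs $\sigma(x)$ near the first group. Hence a single differencing step with respect to a second group already annihilates the characteristic number, so every configuration with $\#A\ge 2$ vanishes outright, and only the single groups $(k)$ remain for the degree argument. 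Your proposal, which treats all steps as differences in one ``effective'' index variable, conflates the distinct index variables of distinct groups and misses this. For the record, the paper itself gives no proof (the statement carries $\Box$ with a reference to Theorem~4 of \cite{V-11}); the ornament-case argument there follows precisely this reduce-to-one-group-then-difference scheme, which your sketch approximates in spirit but does not reach because of the two gaps above.
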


A wide class of invariants, generalizing these from \S~\ref{itype}, was
constructed by A.~B.~Merkov, see \cite{Merx-2}. A further generalization of
these invariants, \cite{Merx-4}, classifies all doodles up to equivalence; in
particular it is as strong as entire space of all finite-order invariants.

\subsection{Coding and calculation of finite-order invariants.}
\label{codcal}

Of course, the characteristic numbers of a degeneration process (and hence also
the notion of the order) depend not of its geometrical realization, but only of
some discrete data related with it, such as the combinatorial type of the set
of points in $S^1$ pasted together at different its steps. Let us describe
these data.

Let $A$ be a finite series of integer numbers $A = (a_1 , a_2, \ldots , a_m)$,
all of which are $\ge 3.$ Denote by $|A|$ the number $a_1 + \ldots + a_m$, and
by $\#A$ the number of elements $a_l$ of the series $A$ (denoted in the
previous line by $m$).

\begin{definition}
An $A$-{\it configuration} is a collection of $|A|$ pairwise different points
in $S^1$ divided into groups of cardinalities $a_1, \ldots, a_{\#A}$. Two
$A$-configurations are {\it equivalent} if they can be transformed one into the
other by an orientation-preserving diffeomorphism of $S^1$. A quasidoodle
$\phi:S^1 \to {\bf R}^2$ {\it respects} an $A$-configuration if it sends any of
corresponding $\#A$ groups of points into one point in ${\bf R}^2$. $\phi$ {\it
strictly respects} the $A$-configuration if, moreover, all these $\#A$ points
in ${\bf R}^2$ are distinct, have no extra preimages than these $|A|$ points,
and $\phi$ has no extra points in ${\bf R}^2$ at which images of three of more
different points of $S^1$ meet.
\end{definition}

Obviously, the space of all quasidoodles respecting a given
$A$-con\-fi\-gu\-ra\-tion $J$ is a linear subspace of codimension $2(|A|-\#A)$
in the space $\K$ of all quasidoodles. The number $|A|-\#A$ is thus called the
{\em complexity} of the configuration. We shall denote this subspace by
$\chi(J)$. The set of all quasidoodles, which {\em strictly} respect this
configuration, is an open dense subset in this subspace.

\begin{definition}
A {\em degeneration mode} of an $A$-configuration is some arbitrary order of
marking all the points of the configuration, satisfying the following
conditions: on any step we mark either some three points of some of $\#A$
groups (if none point of the same group is already marked) or one point of a
group, some three or more points of which are already marked.
\end{definition}

Any degeneration process of a regular quasidoodle $\phi$ defines in the obvious
way a degeneration mode of the $A$-configuration strictly respected by $\phi.$

Let $M$ be an invariant of doodles, and $J$ an $A$-configuration.

\begin{proposition}[cf. Theorem 5 in \cite{V-11}]
\label{symbol} If $M$ is an invariant of order $i$, and $|A| - \#A = i,$ then
for any regular quasidoodle $\phi,$ which strictly respects the
$A$-con\-fi\-gu\-ra\-ti\-on $J$, any characteristic number defined by the
triple consisting of $M$, $\phi$ and a degeneration process of $\phi$, depends
only on the pair consisting of the configuration $J$ and its degeneration mode
defined by this degeneration process. \quad $\Box$
\end{proposition}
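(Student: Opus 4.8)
The plan is to recognize the characteristic number as an iterated finite difference of $M$ and then to prove its invariance by a wall-crossing argument inside the linear subspace $\chi(J)$. Concretely, fully resolving every elementary step of a degeneration process $DP$ (each step admitting exactly two resolutions, positive and negative, ordered as in \S\ref{coorient} and by the index rule for a joining branch) produces a finite collection of regular doodles, the \emph{corners}, one for each choice of sides. Unwinding the inductive definition of the characteristic number shows that the number attached to $(M,\phi,DP)$ is the alternating sum $\sum_{\text{corners}}(-1)^{(\#\text{ negatively resolved steps})}\,M(\text{corner})$. It therefore suffices to prove that this sum depends only on the pair $(J,\text{mode})$, which I would split into (a) independence of the geometric representative $\phi$ for a fixed mode, and (b) independence of the geometric process among those inducing a fixed mode.

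For (a), recall that the quasidoodles strictly respecting $J$ form an open dense subset of the connected (indeed linear, hence convex) subspace $\chi(J)$. First I would note local constancy: if $\phi$ is deformed while continuing to strictly respect $J$, then each corner doodle is deformed without ever meeting the discriminant $\Sigma$, since the only multiple points present are the prescribed ones of $J$, which are precisely what is being resolved; as $M$ is an invariant, each $M(\text{corner})$, and hence the whole sum, is unchanged. Then, to compare two arbitrary strictly respecting representatives $\phi_0,\phi_1$, I would join them by a path in $\chi(J)$ in general position, which by transversality meets the non-strictly-respecting locus in finitely many points, each on a single codimension-one stratum and crossed transversally.

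The main step, and the main obstacle, is to show that the alternating sum does not jump across such a wall. Since condition (\ref{main}) cuts out the only codimension-one stratum of $\Sigma$, whereas the branch-joining degeneracies, the collisions of two of the $\#A$ image points, and the degenerate conditions (\ref{second}) and (\ref{tert}) are all of codimension $\ge 2$ in $\chi(J)$, a generic path meets only walls on which $\phi$ acquires a single fresh triple point of three branches not constrained by $J$. Crossing such a wall, every corner doodle simultaneously passes once through the triple-point stratum, so the jump of the alternating sum equals the iterated difference of $M$ taken over all the original resolution steps together with this one new triple point. That difference is exactly the characteristic number of the enlarged configuration $J'$, obtained by adjoining the new triple to $J$, equipped with the induced degeneration process; its complexity is $i+2>i$, so it vanishes because $M$ has order $i$. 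Hence the sum is unchanged across every generic wall and is therefore constant on the whole strictly respecting locus. The delicate points here are the transversality bookkeeping that confines the path to codimension-one triple-point walls, and the verification, via the coorientation of \S\ref{coorient}, that the jump carries precisely the sign making it the genuine characteristic number of $J'$ rather than its negative.

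Finally, for (b), with $\phi$ fixed the positive and negative sides at each elementary step are prescribed combinatorially, by the coorientation of \S\ref{coorient} for a triple point and by the greater-index rule of \S\ref{itype} for a joining branch, so the equivalence classes of all corner doodles, and thus the alternating sum, are determined by $\phi$ together with the mode alone; any remaining freedom in realizing the process geometrically is absorbed by the local-constancy and wall-crossing arguments of (a). Combining (a) and (b), the characteristic number factors through $(J,\text{mode})$, as claimed. This is the exact analogue, in the present triangular-diagram setting, of the well-definedness of the symbol of a finite-order knot invariant on chord diagrams, and runs parallel to Theorem~5 of \cite{V-11}.
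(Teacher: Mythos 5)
Your overall strategy---unwinding the characteristic number into an alternating sum of $M$ over the $2^{\#\mathrm{steps}}$ full resolutions (``corners''), then proving constancy on the strictly-respecting locus by a wall-crossing argument inside the linear space $\chi(J)$, with every jump identified as a characteristic number of complexity $>i$ and hence zero---is exactly the intended one (the paper gives no proof of Proposition~\ref{symbol}, deferring to Theorem~5 of \cite{V-11}, where this is the argument). However, your transversality bookkeeping contains a genuine error. You claim that the only codimension-one walls a generic path in $\chi(J)$ can meet are those where a \emph{fresh} triple point forms among three branches unconstrained by $J$, the ``branch-joining degeneracies'' being of codimension $\ge 2$ in $\chi(J)$. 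That count is wrong: for $\phi\in\chi(J)$, the condition that some new point $w\in S^1$ satisfies $\phi(w)=\phi(x)$ for $x$ in one of the $\#A$ groups imposes two equations on $\phi$ against one free parameter $w$, hence sweeps out a wall of codimension one in $\chi(J)$. It is again an instance of condition (\ref{main}), only with two of the three colliding preimages already glued by $J$, so it is not separate from ``the codimension-one stratum of $\Sigma$'' as you assumed. A generic path between two strictly respecting representatives \emph{will} cross such walls, and your proof as written says nothing about those crossings. Indeed, the paper's degeneration modes explicitly include the step ``mark one more point of a group, three or more points of which are already marked,'' and the two-sided resolutions of Fig.~\ref{reidem}d exist precisely because this wall has codimension one.

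Fortunately the omission is repaired by the same mechanism you use for fresh triple points: crossing a branch-joining wall, the two sides are (by the greater-index rule of \S~\ref{elemdef}) the positive and negative resolutions of the enlarged configuration $J'$ obtained by adjoining $w$ to the corresponding group, so the jump of your alternating sum is, by the inductive definition, exactly the characteristic number of $J'$ with the induced process whose last step is this joining; its complexity is $|A|+1-\#A=i+1>i$, so it vanishes since $M$ has order $i$. (Your fresh-triple walls give complexity $i+2$, as you correctly note; the mixed case of two new points meeting an existing group image is of codimension $2$ and is avoided generically.) With this case added, the wall-crossing argument closes, and your parts (a) and (b) are otherwise sound.
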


\noindent {\bf Corollary.} {\em For any $j$ the group of order $j$ invariants
of doodles or I-doodles is finitely generated.}
\medskip

Indeed, the number of its generators does not exceed the sum (over all
equivalence classes of $A$-configurations of complexity $\le i$) of numbers of
their degeneration modes. \quad $\Box$
\medskip

Any invariant of order $i$ can be encoded by its {\it characteristic table}
which we now describe.

This table has $i+1$ levels numbered by $0, 1, \ldots, i$. The $l$-th level
consists of several cells, which are in one-to-one correspondence with all
possible pairs consisting of

a) an equivalence class of $A$-configurations of complexity $l$ in $S^1$,

b) a degeneration mode of this $A$-configuration.

In each cell we indicate

a) a picture (or a code) representing a ``model'' regular quasidoodle, which
strictly respects some $A$-confi\-gu\-ra\-ti\-on from the corresponding
equivalence class (this picture is the same for all invariants),

b) a degeneration process of this quasidoodle, defining this degeneration mode
(also not depending on the invariant), and

c) the characteristic number, which our invariant and the degeneration process,
corresponding to the cell, assign to this quasidoodle.

By the Proposition~\ref{symbol}, we may not specify the pictures and
degeneration processes in the cells of the highest ($i$-th) level of the table:
indeed, the corresponding characteristic numbers depend only on the data
indexing the cell.

For instance, the $0$-th level consists of the trivial $\bigcirc$-like doodle,
and the corresponding characteristic number equals $0$ (we can normalize all
invariants so that they take zero value on the trivial doodle). The $1$-st
level is empty, because there are no configurations of complexity $1$.
\medskip

Having these data, we can calculate our invariant by the inductive process,
coinciding identically with the one described in \S~3 of \cite{V-11} or \S~4.2
of \cite{V-7} (where, however, the characteristic numbers sometimes are called
"actuality indices", and all (quasi)doodles should be replaced by
(quasi)ornaments or (singular) knots).

\begin{remark}
Of course, the characteristic numbers corresponding to different degeneration
modes of the same regular quasidoodle satisfy some natural relations. For
instance, if two degeneration modes differ only by a reordering of markings,
preserving their order inside any group of the $A$-configuration, then the
corresponding characteristic numbers coincide.

Less trivial identities, relating different degeneration modes inside the same
group, follow from the differentials in the chain complex of {\em connected
hypergraphs}, see \cite{V-10}, \cite{BW}.
\end{remark}

\section{Invariants of doodles in terms of the resolved discriminant}

We shall work with the space $\K\equiv C^\infty(S^1,\R^2)$ as with an Euclidean
space of a very large but finite dimension $\Delta$. The justification of this
assumption uses the finite-dimensional approximations of this space and is
similar to that given in \cite{V-11}, \cite{V-7}. A rigorous reader can
everywhere below consider $\K$ as a generic finitedimensional subspace in
$C^\infty(S^1, \R^2).$
\medskip

In particular, we shall use the Alexander duality formula
\begin{equation}
\label{alex} \tilde H^i(\K \setminus \Sigma) \simeq \bar
H_{\Delta-1-i}(\Sigma),
\end{equation}
where $\tilde H^*$ is the usual reduced cohomology group (we are especially
interested in the group $\tilde H^0(\K \sm \Sigma)$ of invariants taking zero
value on the trivial doodle), and $\bar H_*$ is the {\em Borel--Moore homology
group}, i.e. the homology group of the one-point compactification reduced
modulo the added point.

\subsection{Simplicial resolution of the discriminant variety.}

Denote by $\Psi$ the {\em configuration space} of all unordered collections of
three points in $S^1,$ so that $\Psi=(S^1)^3/S(3).$ It is a smooth
3-dimensional manifold with corners, homotopy equivalent to $S^1$. More
precisely, it is the space of an orientable fiber bundle over $S^1,$ whose
projection $p$ sends a triple of points to their sum in the Lie group $S^1,$
the fiber is a closed filled triangle, and the monodromy over the entire base
$S^1$ provides the cyclic permutation of sides and vertices of the triangles.
The "zero section", consisting of centers of these fibers, consists of
configurations, all whose points are at the distance $2\pi/3$ one from the
other.

Let us fix a space $\R^N$ of a huge dimension (much greater than that of the
space $\K$) and fix a generic embedding $\lambda: \Psi \to \R^N.$ For any point
$\phi \in \Sigma$ consider all such points $\{x,y,z\} \in \Psi$ that one of
three conditions holds:

a) $x \ne y \ne z \ne x$ and $\phi(x)=\phi(y)=\phi(z)$;

b) $x=z \ne y$ and $\phi'(x)=0, \phi(x)=\phi(y)$;

c) $x=y=z$ and $\phi'(x)=\phi''(x)=0.$

Then consider all points $\lambda(\{x,y,z\}) \in \R^N$ for all such triples
$\{x,y,z\}$. Since our embedding is generic (and $N$ is sufficiently large)
then for any $\phi \in \Sigma$ the convex hull of all such points is a simplex
with vertices at all these points. (Using the {\em generic} finite-dimensional
approximations of the space $\K$ we can ignore the situation when the number of
such triples is infinite, moreover, we can assume that the number of such
triples has a finite upper estimate, uniform over all $\phi \in \Sigma.$)

Denote this simplex by $\sigma(\phi).$

Finally, define the {\em resolution set} $\sigma \subset \K \times \R^N$ as the
union of all simplices of the form $\phi \times \sigma(\phi)$ over all $\phi
\in \Sigma$.
\medskip

The obvious projection $\K \times \R^N \to \K$ provides the map $\pi: \sigma
\to \Sigma.$ By definition, this map is surjective, and by the previous
"finiteness assumption" it is also proper.

\begin{proposition}[cf. \cite{V-7}, \cite{V-11}]
\label{homeq} The map $\pi$ provides the homotopy equivalence of one-point
compactifications of spaces $\sigma$ and $\Sigma$. \quad $\Box$
\end{proposition}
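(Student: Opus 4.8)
The plan is to realize $\sigma$ as the standard simplicial (Vassiliev) resolution of the discriminant $\Sigma$ and to prove that the proper surjection $\pi$ induces a homotopy equivalence of one-point compactifications by the usual fiber-by-fiber argument. The essential point is that over each stratum of $\Sigma$ the fibers of $\pi$ are contractible (indeed simplices or faces thereof), so that after one-point compactification $\pi$ is a proper map with contractible fibers between reasonable spaces, hence a homotopy equivalence.

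First I would stratify $\Sigma$ according to the combinatorial type of the set $T(\phi) \subset \Psi$ of "forbidden triples" $\{x,y,z\}$ attached to each $\phi \in \Sigma$, i.e. those satisfying one of conditions (a), (b), (c). On the open dense part of $\Sigma$ (maps with exactly one transverse triple point and no other degeneracies) the set $T(\phi)$ is a single point of $\Psi$, so $\sigma(\phi)$ is a single vertex and $\pi$ is a homeomorphism there. On the deeper strata—where several triple points coexist, or where a degeneration of type (b) or (c) occurs in the closure—the set $T(\phi)$ is finite with a uniformly bounded number of elements (by the genericity of $\lambda$ and the finiteness assumption invoked just before the statement), and $\sigma(\phi)$ is the full simplex spanned by $\lambda(T(\phi))$. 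The key structural input, already secured by the genericity of the embedding $\lambda:\Psi\to\R^N$ into a space of huge dimension, is that for every $\phi$ the images of the relevant triples are affinely independent, so $\sigma(\phi)$ is genuinely a simplex and its faces correspond bijectively to subsets of $T(\phi)$.

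Next I would verify that $\pi$ is proper and that the preimage of each point is contractible: over $\phi$ the fiber $\pi^{-1}(\phi)$ is exactly the simplex $\sigma(\phi)$, which is convex, hence contractible; and properness is precisely the "finiteness assumption" stated in the paragraph preceding the proposition. I would then compactify: passing to one-point compactifications $\bar\sigma$ and $\bar\Sigma$, the map $\bar\pi$ remains proper with all point-preimages either a contractible simplex (over points of $\Sigma$) or the single added point. The standard theorem on resolutions—that a proper map whose fibers are all contractible and which is a quasifibration over each stratum of a reasonable (semialgebraic, or here finite-dimensional-approximation) stratification induces an isomorphism on Borel–Moore homology, and in fact a homotopy equivalence of the compactifications—then finishes the proof. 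This is exactly the mechanism of \cite{V-7}, \cite{V-11} cited in the statement, so I would formally reduce to those references for the general homotopy-equivalence principle and only check that the present $\sigma$, $\Sigma$, and $\pi$ satisfy its hypotheses.

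The main obstacle is not the abstract resolution theorem but the verification of its hypotheses at the boundary strata of $\Psi$, where the configuration space degenerates. Because $\Psi=(S^1)^3/S(3)$ is a manifold with corners whose diagonal boundary encodes precisely the collisions underlying conditions (b) (two points merging at a zero of $\phi'$) and (c) (all three merging at a zero of $\phi'=\phi''$), one must check that the assignment $\phi \mapsto T(\phi)$ varies continuously and that the limiting triples in cases (b),(c) are genuinely included in $T(\phi)$ for $\phi$ in the closure—so that the simplex $\sigma(\phi)$ does not drop dimension discontinuously and the map on compactifications is continuous. Here the explicit description of $\Psi$ as an orientable bundle over $S^1$ with triangle fiber, given just above, is what lets one control these degenerations uniformly: the monodromy merely cyclically permutes the vertices, so the combinatorics of collisions is governed by the corner structure of the triangle fibers and is therefore tame. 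Once this continuity and the uniform finiteness of $T(\phi)$ are in hand, the proposition follows immediately from the cited general principle.
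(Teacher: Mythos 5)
Your proposal is correct and follows essentially the same route as the paper, which offers no independent argument for Proposition~\ref{homeq} but delegates it (via the ``cf.~\cite{V-7}, \cite{V-11}'') to the standard simplicial-resolution principle: $\pi$ is a proper surjection whose fibers $\sigma(\phi)$ are simplices, hence contractible, and the genericity of $\lambda$ together with the uniform finiteness assumption guarantees the hypotheses of that principle. Your additional attention to the boundary strata of $\Psi$ and to the inclusion of the degenerate conditions (b), (c) in $T(\phi)$ --- which is exactly what makes $\sigma$ closed and $\pi$ proper --- is the right verification and matches the intent of the construction preceding the proposition.
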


In particular, the Borel--Moore homology groups (see (\ref{alex})) of these
spaces are canonically isomorphic.

\subsection{$A$-cliques and the main filtration of the resolved discriminant.}
\label{mainfil}

The space $\sigma$ admits a natural filtration, which can be defined in two
equivalent ways. To do it, we need to extend the notion of an $A$-configuration
used in \S~\ref{codcal}. Again, let $A$ be a finite series of $\#A$ integer
numbers $A = (a_1 , a_2, \ldots , a_{\#A})$, all of which are $\ge 3.$

\begin{definition} An $A$-{\em clique} is an unordered collection of
$a_1 + \cdots + a_{\#A}$ points in $S^1,$ divided into groups of cardinalities
$a_1, \ldots, a_{\#A},$ such that a) points of different groups do not coincide
geometrically; b) points inside a group can coincide, but with multiplicity at
most 3. Again, the {\em complexity} of a clique $J$ is the number $|A|-\#A$;
another important characteristic, $\rho(J)$, is the number of geometrically
distinct points in it, i.e. the dimension of the space of cliques equivalent to
it.

The map $\phi:S^1 \to \R^2$ {\em respects} an $A$-clique if it glues together
all geometrically distinct points inside any its group, satisfies the condition
$\phi'=0$ at all points of multiplicity 2, and satisfies the condition $\phi'=
\phi''= 0$ at all points of multiplicity 3. $\phi$ {\em strictly respects} it,
if additionally it does not respect any cliques of larger complexity.
\end{definition}

For any $A$-clique $J$, consider all triples of points in $S^1$ belonging to
the same its group. All such triples are the points of the configuration space
$\Psi.$ Consider the images in $\R^N$ of all these points under the embedding
$\lambda$ and define the simplex $\sigma(J) \subset \K \times \R^N $ as the
convex hull of all such points.

\begin{example} Suppose that $\#A=1,$ $a_1=4,$ and the unique group
of our $A$-clique is a quadruple of points $(x,x,y,z),$ exactly two of which
coincide. Then the simplex $\sigma(J)$ is a triangle with 3 vertices $(x,x,y)$,
$(x,x,z)$, $(x,y,z)$.
\end{example}

Now, for any natural $i$ we take all quasidoodles, {\em strictly} respecting
all possible $A$-cliques of complexities $\le i,$ then consider the union of
their complete preimages in $\sigma$ and, finally, define the term $\sigma_i$
of our {\em main filtration} as the closure of this union. It contains also
some points of the form $\phi \times \theta \in \K \times \R^N,$ where $\phi$
is a quasidoodle of complexity $ >i$, and $\theta$ is some boundary point of
the corresponding simplex $\sigma(\phi).$

Equivalently, for any $A$-clique $J$ we can define the linear subspace $\chi(J)
\subset \K$ consisting of all quasidoodles respecting (strictly or not) this
clique. The term $\sigma_i \subset \sigma$ of the main filtration is then
defined as the union of all subsets $\chi(J) \times \sigma(J) \subset \K \times
\R^N$ over all cliques $J$ of complexity $\le i$. It is easy to see that these
two definitions of the main filtration are equivalent.

\begin{definition} An element of the group
$\bar H_*(\Sigma) \equiv \bar H_*(\sigma)$ is {\em of order} $i$ if it can be
realized by a locally finite cycle lying in the term $\sigma_i$ of this
filtration. In particular, an invariant of doodles is of order $i$ if its class
in the group (\ref{alex}) can be realized as a linking number with the direct
image of a cycle lying in $\sigma_i.$
\end{definition}

\begin{proposition}[cf. \cite{V-11}, Theorem 7]
\label{equival} The last definition of the order of invariants is equivalent to
that given in \S~\ref{elemdef}. \quad $\Box$
\end{proposition}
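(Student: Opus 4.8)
The plan is to identify the two definitions by going through Alexander duality (\ref{alex}) and the homotopy equivalence of Proposition~\ref{homeq}, which lets us replace $\Sigma$ by its resolution $\sigma$ carrying the main filtration of \S~\ref{mainfil}. A normalized invariant $M \in \tilde H^0(\K \sm \Sigma)$ is dual to a locally finite $(\Delta-1)$-cycle $c_M$ in $\sigma$; the elementary order counts the largest complexity on which the associated characteristic numbers survive, while the geometric order is the least $j$ with $[c_M]$ representable inside $\sigma_j$. I will show that these two quantities coincide, the argument being a transcription of the proof of Theorem~7 in \cite{V-11} from ornaments to doodles.

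The core step is to read the characteristic numbers of \S~\ref{elemdef} as pairings of $M$ with the normal generators of the filtration. Fix a regular quasidoodle $\phi$ strictly respecting an $A$-clique $J$ of complexity $i$, and a degeneration mode of $J$. Each step of the mode corresponds to traversing exactly one locally irreducible, cooriented branch of the discriminant: for a step that creates a new triple group this is the coorientation of \S~\ref{coorient}, and for a step joining a further branch to an existing multiple point it is the index comparison of \S~\ref{elemdef}. Iterating the single-step differences (positive resolution minus negative) along the whole mode produces precisely the iterated finite difference of $M$ across these successive cooriented walls, which by the definition of the linking pairing equals the intersection number of $c_M$ with the transverse normal slice that the degeneration process cuts out near the stratum $\chi(J) \times \sigma(J)$. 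Consequently the characteristic numbers attached to all $A$-cliques of complexity $i$ and all their degeneration modes are exactly the coordinates of the image of $[c_M]$ in the group generated by the cells of $\sigma_i$ modulo $\sigma_{i-1}$.

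Granting this dictionary, the equivalence follows formally. The class $[c_M]$ lies in $\sigma_j$ if and only if it pairs to zero with every normal generator sitting over a stratum of complexity strictly greater than $j$; by the previous paragraph those pairings are exactly the characteristic numbers of quasidoodles of complexity $>j$, and their simultaneous vanishing is the elementary definition of ``order $\le j$''.

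The main obstacle is the identification asserted in the middle paragraph: that the recursively defined alternating sum is a genuine homological intersection number, and that the slices cut out by degeneration modes generate the relevant relative homology $\bar H_*(\sigma_i, \sigma_{i-1})$. This is where the orientation and coorientation bookkeeping of \S~\ref{coorient} must be carried through with care, and where one uses that each elementary degeneration crosses a single codimension-one branch, so that its two sides really are the $\pm$ resolutions. One must also verify that the lower strata of types (\ref{second}) and (\ref{tert}) contribute nothing to these pairings, which follows from their higher codimension together with the density statements of the propositions in \S~1.
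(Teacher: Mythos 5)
Your proposal is correct and is essentially the paper's own (implicit) proof: the paper proves nothing beyond citing Theorem~7 of \cite{V-11}, and your sketch is precisely that argument transcribed to doodles --- identify the invariant with a locally finite $(\Delta-1)$-cycle in $\sigma$ via (\ref{alex}) and Proposition~\ref{homeq}, read the iterated finite differences (characteristic numbers) as intersection indices with the normal slices of the strata $\chi(J)\times\sigma(J)$ indexed by degeneration modes, and conclude that vanishing of all of them in complexity $>j$ is equivalent to representability of the class in $\sigma_j$. One small correction of attribution: the fact that cliques with multiple points (degenerations of types (\ref{second}), (\ref{tert})) cannot carry coefficients of a $(\Delta-1)$-cycle is not a ``density'' statement from \S~1 but the dimension Lemma of \S~\ref{revbl}, by which only blocks over $A$-configurations attain dimension $\Delta-1$ while the degenerate blocks have dimension $\le\Delta-2$.
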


\section{Calculation of invariants of doodles}

Consider the spectral sequence $E^r_{p,q}$ calculating the Borel--Moore
homology group of the space $\sigma$ and generated by our filtration. Its term
$E^1_{p,q}$ is isomorphic to $\bar H_{p+q}(\sigma_p \sm \sigma_{p-1})$.

\begin{proposition}[cf. \cite{V-11}]
\label{dimres} All groups $E^1_{p,q}$ with $p+q \ge \Delta$ are equal to 0.
\end{proposition}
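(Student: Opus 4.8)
The plan is to compute $\bar H_*(\sigma_p \sm \sigma_{p-1})$ through the geometric stratification of this space by equivalence classes of $A$-cliques of complexity exactly $p$, and to bound the top nonzero Borel--Moore degree of each stratum by $\Delta-1$. For a fixed equivalence class of $A$-cliques $J$ with $|A|-\#A=p$, let $\Theta_J$ be the set of pairs $(\phi,\theta)$ such that $\phi$ strictly respects some clique $J'$ equivalent to $J$ and $\theta$ lies in $\sigma(J')\sm\sigma_{p-1}$. Since a strictly respected clique is recovered from $\phi$, the map $\Theta_J\to B_J$ to the configuration space $B_J$ of cliques equivalent to $J$ (with $\dim B_J=\rho(J)$) is a locally trivial fiber bundle.

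First I would identify the fiber as the product $\chi^\circ(J)\times\Xi_J$, where $\chi^\circ(J)$ is the open subset of the affine subspace $\chi(J)\subset\K$ of maps that strictly respect the clique, so $\dim\chi^\circ(J)=\Delta-\operatorname{codim}\chi(J)$, and $\Xi_J=\sigma(J)\sm(\sigma(J)\cap\sigma_{p-1})$ is the simplex $\sigma(J)$ with the union of the faces lying in lower filtration terms deleted. Naively this fiber has large dimension: already for one group of four distinct points $\chi^\circ(J)$ has dimension $\Delta-6$, $B_J$ has dimension $4$, and $\sigma(J)$ is a tetrahedron, so the stratum has dimension $\Delta+1$. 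Thus a bare dimension count cannot work, and the argument must instead exploit that the Borel--Moore homology of $\Xi_J$ is concentrated in low degrees.

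The heart of the estimate is therefore a degree bound on $\Xi_J$. Because $\sigma(J)$ is the join of the simplices attached to its $\#A$ groups and the deleted faces are exactly those spanned by triples that generate sub-cliques of smaller complexity, $\bar H_*(\Xi_J)$ is the shifted homology of the complex of connected hypergraphs on the groups of $J$; I would show it is concentrated in degrees $\le |A|-2\#A-1$ (for one group of $a$ points this reads $\le a-3$; e.g. a group of four distinct points gives a tetrahedron minus its four vertices, whose one-point compactification is $\sigma(J)/\{4\text{ vertices}\}$, with $\bar H_*=\Z^3$ in degree $1=a-3$). Combining this with the Künneth formula for the product fiber and the Leray spectral sequence of $\Theta_J\to B_J$, the top nonzero degree of $\bar H_*(\Theta_J)$ is at most
$$\rho(J)+\bigl(\Delta-\operatorname{codim}\chi(J)\bigr)+\bigl(|A|-2\#A-1\bigr).$$
For a non-degenerate $J$ (all points distinct) one has $\rho(J)=|A|$ and $\operatorname{codim}\chi(J)=2(|A|-\#A)=2p$, and this bound collapses to exactly $\Delta-1$. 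I would then check that the extra derivative conditions $\phi'=0$ and $\phi'=\phi''=0$ imposed at the multiple interior points of a degenerate clique only decrease $\dim\chi^\circ(J)$ and $\rho(J)$, and shrink $\sigma(J)$, so the estimate can only drop further, leaving $\Delta-1$ as a universal bound over all clique types.

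Finally I would assemble the strata: ordering the finitely many clique types by degeneration so that their closures filter $\sigma_p\sm\sigma_{p-1}$ by closed subsets with successive differences the $\Theta_J$, the associated long exact sequences (equivalently, the spectral sequence of this secondary filtration with $E^1=\bigoplus_J\bar H_*(\Theta_J)$) show $\bar H_k(\sigma_p\sm\sigma_{p-1})=0$ as soon as every $\bar H_k(\Theta_J)=0$, i.e. for all $k\ge\Delta$. This is precisely the claim $E^1_{p,q}=\bar H_{p+q}(\sigma_p\sm\sigma_{p-1})=0$ for $p+q\ge\Delta$. The main obstacle is the connectivity bound on $\Xi_J$: it is exactly this concentration of the simplicial fiber's homology in low degree, rather than any dimension inequality, that forces the Borel--Moore homology of the individually higher-dimensional strata back below $\Delta$.
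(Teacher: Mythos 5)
Your strategy is sound and your conclusion is the right one, but your route differs genuinely from the paper's. The paper never bounds the homology of the simplicial fibers directly: in \S~\ref{revbl} it replaces each block $B(\J)$ by the \emph{visible block} $VB(\J)$, in which the fiber $\sigma(J)$ (minus lower-filtration faces) is replaced by the order complex $\Diamond(J)$ of the poset $\Pi(J)$ of subspaces; Proposition~\ref{homeq2} (cited to \cite{V-11}, \cite{Phasis}) asserts this inclusion is a homotopy equivalence of one-point compactifications, and Proposition~\ref{dimres} then follows from a \emph{bare dimension count}: $\dim VB(\J) \le \rho(\J) + \bigl(\Delta - 2(|A|-\#A)\bigr) + \bigl(|A|-2\#A-1\bigr) \le \Delta-1$, since the maximal chains in $\Pi(J)$ have length $|A|-2\#A$. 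You instead keep the original simplex-fibered strata, correctly observe that their dimensions exceed $\Delta$ (your $\Delta+1$ count for a $(4)$-configuration is right), and substitute a homological concentration statement, $\bar H_*(\Xi_J)$ in degrees $\le |A|-2\#A-1$, for the paper's geometric retraction. These are two faces of the same fact: $\Diamond(J)\sm\Diamond_{p-1}$ is precisely a proper deformation retract of $\Xi_J$, of dimension $|A|-2\#A-1$, so the concentration you need is exactly what the visible-block construction delivers; conversely, your lemma is available independently from the literature on connected-hypergraph complexes and $k$-equal partition lattices (\cite{V-10}, \cite{BW} --- note the paper's own footnote in \S~\ref{primstrat} identifying the deleted faces with non-connected $3$-hypergraphs). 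What the paper's route buys is reusability: the revised resolution is then used throughout to compute the columns $p=3,4$. Your route is more self-contained at this single proposition, but the key lemma remains an appeal to external results (you verify it only for $a=4$), and the join bookkeeping --- $\sigma(J)$ is the join over the $\#A$ groups, with Borel--Moore degrees adding with a shift of $\#A-1$, giving $\sum(a_i-3)+(\#A-1)=|A|-2\#A-1$ --- should be written out, as it is what makes the multi-group bound collapse to $\Delta-1$.

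One inaccuracy to repair: your strata $\Theta_J$, with $\phi$ required to \emph{strictly} respect the clique, do not cover $\sigma_p \sm \sigma_{p-1}$. The paper notes explicitly that $\sigma_i$ contains points $\phi\times\theta$ where $\phi$ has complexity $>i$ and $\theta$ lies in $\sigma(J)$ for a complexity-$i$ clique $J$ respected, but not strictly respected, by $\phi$; such pairs lie in none of your $\Theta_J$, so the closures of the $\Theta_J$ cannot have the claimed successive differences. The fix is the paper's own convention in \S~\ref{primstrat}: take the fiber to be the full linear subspace $\chi(J)$, with the bundle projection to the clique space read off from the minimal face of the simplex containing $\theta$ rather than from $\phi$. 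Since $\dim\chi(J)=\dim\chi^\circ(J)=\Delta-2p$ (the derivative conditions at multiple points exactly compensate the coincidences, so the codimension is $2(|A|-\#A)$ for degenerate cliques too), the numerical estimate and your conclusion are unchanged.
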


The proof of this proposition will be given in \S~\ref{revbl}.
\medskip

Hence, for the calculation of the group of invariants (by (\ref{alex})
coinciding with $\bar H_{\Delta-1}(\sigma)$) only the $(\Delta-1)-$ and
$(\Delta-2)$-dimensional Borel--Moore homology groups of these spaces $\sigma_i
\sm \sigma_{i-1}$ are interesting.

In this section we calculate these groups for $i \le 4.$

\subsection{Stratification of the resolved discriminant.}
\label{primstrat}

By construction, any space $\sigma_i \sm \sigma_{i-1}$ consists of several
$\J$-{\em blocks}, numbered by all equivalence classes $\J$ of $A$-cliques of
complexity exactly $i$. Given such an equivalence class $\J,$ the corresponding
block $B(\J)$ is the space of a fiber bundle, whose base is the space of all
$A$-cliques $J$ of this class, and the fiber over a clique $J$ is the direct
product of

a) a linear subspace of codimension $2i$ in $\K$, consisting of all
quasidoodles respecting $J$ (the vector bundle of such subspaces is always
orientable) and

b) a dense subset of the simplex $\sigma(J)$ (namely, this simplex minus some
its faces, which may belong to $\sigma_{i-1}$\footnote{If $J$ is an
$A$-configuration, i.e. has no multiple points, then these are exactly the
faces corresponding to {\em not connected} 3-hypergraphs, see \cite{V-10},
\cite{BW}}).
\medskip

By the Thom isomorphism, the Borel--Moore homology group $\bar H_*$ of any such
block is canonically isomorphic to the group $\bar H_{*-(\Delta-2i)}$ of the
space of only the second bundle of complexes b). Such spaces will be called the
{\em reduced} $\J$-blocks.

The bases of these fiber bundles are $\rho(\J)$-dimensional manifolds, where
$\rho(\J)$ is the number of geometrically distinct points in any clique $J$ of
the class $\J.$

\subsection{The auxiliary filtration.}

\begin{definition} We introduce the {\em auxiliary filtration}
in the space $\sigma_i \sm \sigma_{i-1}$, defining its term $F_\alpha$ as the
union of all above-described blocks over all classes $\J$ of $A$-cliques such
that $\rho(\J) \le \alpha.$
\end{definition}

\begin{example} The term $\sigma_2$ of the main filtration
consists of exactly 3 terms $F_1 \subset F_2 \subset F_3 \equiv \sigma_2$ of
the auxiliary filtration, because the $(3)$-cliques $\{x,y,z\}$ can consist of
1, 2 or 3 geometrically different points.
\end{example}

\begin{definition} The spectral sequence, calculating the group
$\bar H_*(\sigma_i \sm \sigma_{i-1})$ and generated by this auxiliary
filtration, is called the {\em auxiliary spectral sequence} in contrast to the
{\em main} one generated by the main filtration and calculating the homology
groups of entire $\sigma$.
\end{definition}

\subsection{Columns $p=1$ and $p=2$ of the main spectral sequence.}

The term $\sigma_1$ is empty, because there are no cliques of complexity 1.

\prop \label{si2} The colum $\{p=2\}$ of the term $E^1$ of the main spectral
sequence has only the following nontrivial elements: $E^1_{2,\Delta-5} \simeq
\Z$ and $E^1_{2,\Delta-6} \simeq \Z.$ \eprop

\begin{proof}
The term $\sigma_2$ is the space of a fiber bundle, whose base is the
configuration space $\Psi,$ and the fiber over its point $\{x,y,z\}$ is the
linear subspace of codimension 4 in $\K$ consisting of all quasidoodles
respecting the corresponding $(3)$-clique. This bundle is orientable, therefore
$\bar H_*(\sigma_2) = \bar H_{*-\Delta+4}(\Psi) \simeq H_{*-\Delta+4}(S^1).$
\end{proof}

\subsection{Revised resolution.}
\label{revbl}

Before continuing, we improve slightly the construction of our $\J$-blocks,
described in the subsection \ref{primstrat}. Namely, any of these blocks
contains a deformation retract having the same Borel--Moore homology group; it
will be convenient to consider these new blocks $VB(\J)$, which we (in
accordance with the terminology of \cite{V-11}) shall call the {\em visible
blocks.}

Again, any such block, corresponding to an equivalence class $\J$ of
$A$-cliques, is the space of a fibered product of two bundles, whose base and
the first factor of the fiber are the same as previously (i.e. respectively the
space of all cliques $J \in \J$ and an oriented vector space of dimension
$\Delta-2(|A|-\#A)$). However, the second factors of the fibers of this new
bundle are some subcomplexes of the barycentric subdivision of the
corresponding fibers of the former bundle of simplices.

Namely, consider any $A$-clique $J$ and the corresponding simplex $\sigma(J)$.
Any vertex of this simplex, i.e. a triple of points of our $A$-clique lying
inside one its group, defines a vector subspace of codimension $4$ in $\K$.
Consider all these subspaces corresponding to our clique. (E.g., if the clique
is an $A$-configuration, then there are exactly $\sum_{i=1}^{\#A}
\binom{a_i}{3}$ such different subspaces.) These subspaces together with all
their possible intersections form a partially ordered set $\Pi(J)$ with respect
to the relation of the (inverse) inclusion. This poset has unique maximal
element: the subspace $\chi(J)$, i.e. the intersection of all our subspaces.

Having such a partially ordered set, we can define its {\em order complex}
$\Diamond(J)$ (see e.g. \cite{B}): this is a formal simplicial complex, whose
simplices are all the strictly monotone sequences of elements of our poset.

E.g., if the $A$-clique $J$ is an $A$-configuration, then such simplices of the
maximal dimension in $\Diamond (J)$ are nothing but the degeneration modes of
$J$ described in \S~\ref{codcal}.

This order complex $\Diamond(J)$ can be naturally considered as a subcomplex of
the barycentric subdivision of the simplex $\sigma(J)$ (while its maximal
element $\{\chi(J)\}$ corresponds to the center of $\sigma(J)$).

If the complexity $|A|-\#A$ of $J$ is equal to $j$, then the set $\Diamond(J)
\cap \sigma_{j-1}$ of "marginal" faces of this complex consists of all its
faces not containing the main vertex $\{\chi(J)\}$.

Define the {\em visible block} $VB(\J)$ corresponding to our equivalence class
$\J$ of $A$-cliques as the subset of the previously defined block $B(\J)$,
described in \S~\ref{primstrat}, in which elements of the fibers $\sigma(J)$
should belong to the subcomplex $\Diamond(J) \subset \sigma(J).$

Define the revised resolved discriminant $\Diamond \subset \sigma$ as the union
of all such revised blocks $VB(\J)$. It has a natural {\em main} filtration
$\{\Diamond_i\}$ induced by the identical embedding from the main filtration in
$\sigma$. Similarly, in any space $\Diamond_i \sm \Diamond_{i-1} $ the
auxiliary filtration is induced from that in $\sigma_i \sm \sigma_{i-1}.$

\begin{proposition}[see \cite{V-11}, \cite{Phasis}]
\label{homeq2} The inclusion $\Diamond \hookrightarrow \sigma$ induces a
homotopy equivalence of one-point compactifications of these spaces. The same
is true for the inclusion $\Diamond_j \hookrightarrow \sigma_j$ of any terms of
the main filtrations and also for the inclusions $VB(\J) \hookrightarrow B(\J)$
of blocks in them corresponding to the same classes of equivalent $A$-cliques
of complexity $j$. In particular this inclusion induces an isomorphism of both
the main and auxiliary spectral sequences calculating the Borel--Moore homology
groups of all these objects. \quad $\Box$
\end{proposition}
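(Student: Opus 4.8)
The plan is to reduce all three assertions to the single local statement about one block, $VB(\J) \hookrightarrow B(\J)$, and then reassemble. First I would establish that block statement. The filtered statement $\Diamond_j \hookrightarrow \sigma_j$ then follows by induction on $j$, comparing the Borel--Moore long exact sequences of the pairs $(\sigma_j,\sigma_{j-1})$ and $(\Diamond_j,\Diamond_{j-1})$: by construction the successive quotients $\sigma_j \sm \sigma_{j-1}$ and $\Diamond_j \sm \Diamond_{j-1}$ are the disjoint unions of the blocks $B(\J)$, respectively $VB(\J)$, over all classes $\J$ of complexity $j$, so an isomorphism on each block together with the five lemma gives the isomorphism on the $j$-th term; passing to the limit over $j$ yields $\Diamond \hookrightarrow \sigma$. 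The asserted isomorphism of both spectral sequences is then automatic, since these inclusions respect the main and the auxiliary filtrations and induce isomorphisms on every $E^1$-term.

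To treat one block, recall that $B(\J)$ and $VB(\J)$ are total spaces of two fiber bundles over the \emph{same} base, the $\rho(\J)$-dimensional manifold of $A$-cliques of class $\J$, whose fibers share the same first factor, the oriented vector space $\chi(J)$ of dimension $\Delta-2(|A|-\#A)$, and differ only in the second factor: a dense subset of the simplex $\sigma(J)$ for $B(\J)$ versus the order complex $\Diamond(J) \subset \mathrm{sd}\,\sigma(J)$ for $VB(\J)$. Because the vector-space factor is orientable, the Thom isomorphism reduces the block statement to a fiberwise claim: for each clique $J$ the inclusion of $\Diamond(J)$ into the open part of $\sigma(J)$ (the part lying in $\sigma_j \sm \sigma_{j-1}$) is a homotopy equivalence of one-point compactifications, naturally in $J$.

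For the fiberwise claim I would use the collapsing map $\mathrm{sd}\,\sigma(J) \to \Diamond(J)$ that sends a face of $\sigma(J)$ — that is, a set of triples — to the subspace of $\K$ cut out by those triples, i.e. to the corresponding element of the poset $\Pi(J)$. This is the simplicial map associated with the order-preserving surjection from the face poset of $\sigma(J)$ onto $\Pi(J)$; each of its point-preimages is a subposet of faces possessing a greatest element, hence a cone, hence contractible, and the canonical embedding $\Diamond(J) \hookrightarrow \mathrm{sd}\,\sigma(J)$ is a section of it. A standard acyclic-fibers argument, or equivalently an explicit fibrewise deformation retraction of the open simplex onto $\Diamond(J)$ dragging each point toward the center $\chi(J)$ along the subdivision, then gives the equivalence. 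The crucial compatibility is that this retraction preserves the stratification by complexity: the marginal faces of $\Diamond(J)$, namely those not containing the main vertex $\chi(J)$, are exactly $\Diamond(J) \cap \sigma_{j-1}$, and the faces of $\sigma(J)$ lying in $\sigma_{j-1}$ are carried into them, so the equivalence descends to the open parts and to every term of the filtration.

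The hard part will be globalizing the fiberwise retraction. The combinatorial type of the poset $\Pi(J)$, and hence of the complex $\Diamond(J)$, can jump as $J$ approaches the boundary of its clique stratum, where points of a group collide up to the allowed multiplicity $3$; one must check that the individual retractions assemble into a continuous, proper bundle map and that properness survives passage to one-point compactifications — this is what makes the conclusion a statement about Borel--Moore homology rather than about ordinary homotopy. This is precisely the verification carried out for ornaments in \cite{V-11} and \cite{Phasis}, and I expect the argument to transfer essentially verbatim; the only new feature is the presence of degenerate vertices of multiplicity $2$ and $3$ coming from conditions (\ref{second}) and (\ref{tert}), which enlarge the vertex set of $\sigma(J)$ but leave the poset formalism, and hence the whole retraction scheme, unchanged.
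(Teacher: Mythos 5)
Your proposal is correct and is essentially the paper's own approach: the paper proves nothing internally for this proposition, referring instead to \cite{V-11} and \cite{Phasis}, and the argument there is precisely your scheme --- the order-preserving collapse $\mathrm{sd}\,\sigma(J) \to \Diamond(J)$ with contractible point-preimages (equivalently, a fiberwise deformation retraction of the simplex onto the order complex), checked to be filtration-preserving via the characterization of marginal faces, and then globalized over the clique strata with a properness verification, the degenerate vertices of multiplicities $2$ and $3$ changing nothing in the poset formalism. One small correction: your five-lemma induction on the Borel--Moore long exact sequences yields only the homology isomorphisms and the spectral-sequence statement, not the asserted homotopy equivalence of one-point compactifications (the compactified spaces need not be simply connected, so homology isomorphism does not upgrade automatically); but this costs you nothing, since your globalized retraction is itself a proper, filtration-preserving deformation retraction and hence gives the homotopy equivalences for the blocks, for each $\Diamond_j \hookrightarrow \sigma_j$, and for $\Diamond \hookrightarrow \sigma$ directly, rendering the inductive scaffolding superfluous.
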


The Proposition \ref{dimres} follows immediately from this one and the
following lemma.
\medskip

\begin{lemma} The dimension of any block $VB(\J)$
of $\Diamond$ does not exceed $\Delta-1.$
\end{lemma}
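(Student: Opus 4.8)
The plan is to read off the dimension of $VB(\J)$ directly from its description in \S\ref{revbl} as a fibered product, and then to bound its only nonlinear ingredient, the dimension of the order complex $\Diamond(J)$, by a chain–length count performed one group of the clique at a time. By construction $VB(\J)$ fibers over the $\rho(\J)$-dimensional space of cliques $J\in\J$, the fiber being the product of the oriented vector space $\chi(J)$, of dimension $\Delta-2(|A|-\#A)$, with the complex $\Diamond(J)$. Writing $i=|A|-\#A$ for the complexity, this gives
\[
\dim VB(\J)=\rho(\J)+(\Delta-2i)+\dim\Diamond(J),
\]
so the assertion is equivalent to the purely combinatorial estimate $\rho(\J)+\dim\Diamond(J)\le 2i-1$.

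Now $\dim\Diamond(J)$ is one less than the number of elements of a maximal chain in the poset $\Pi(J)$ of intersections of the codimension-$4$ subspaces attached to the vertices of $\sigma(J)$. The key point is that $\Pi(J)$ factors as a product over the $\#A$ groups of the clique: subspaces attached to different groups are mutually transverse, since different groups involve geometrically distinct points. Hence a maximal chain raises, at each step, the constraint inside a single group by one elementary step. Inside a fixed group of cardinality $a_l$ the codimension runs through the even values $0,4,6,\ldots,2(a_l-1)$ — every vertex subspace has codimension $4$, so the first constraint imposed on a group jumps the codimension from $0$ to $4$, and every later intersection raises it by an even amount of at least $2$, up to $\mathrm{codim}\,\chi=2(a_l-1)$. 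This accounts for exactly $a_l-2$ steps per group, so a maximal chain of $\Pi(J)$ has $\sum_l(a_l-2)=|A|-2\#A$ elements and
\[
\dim\Diamond(J)\le |A|-2\#A-1.
\]

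Substituting this together with the evident bound $\rho(\J)\le|A|$ (the number of geometrically distinct points never exceeds the total count) into the dimension formula yields
\[
\dim VB(\J)\le |A|+\bigl(\Delta-2(|A|-\#A)\bigr)+\bigl(|A|-2\#A-1\bigr)=\Delta-1,
\]
which is the claim; equality forces $\rho(\J)=|A|$, i.e. the configuration case. The main obstacle is the middle step: one must verify that clusters of size exactly two never arise, so that the first constraint on each group costs $4$ rather than $2$ (a cheaper first step would break the count), and that the per-group conditions are transverse across groups. In the presence of coincidences inside a group this requires the bookkeeping that, using the derivative conditions $\phi'=0$ and $\phi'=\phi''=0$ from the definition of a respecting map, every vertex subspace still has codimension $4$ and $\mathrm{codim}\,\chi(J)$ is still $2(|A|-\#A)$; granting this, the codimensions in $\Pi(J)$ remain even and the chain-length estimate is unchanged.
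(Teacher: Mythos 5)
Your proof is correct and takes essentially the same route as the paper's: the identical three-part dimension count $\rho(\J)+\bigl(\Delta-2(|A|-\#A)\bigr)+\dim\Diamond(J)$, the same bound $\rho(\J)\le|A|$, and the same maximal-chain estimate $\dim\Diamond(J)\le|A|-2\#A-1$ derived from even codimension jumps in $\Pi(J)$ with a jump of $4$ at the first step of each group. Your per-group bookkeeping (in particular the check that degenerate vertices such as $(x,x,y)$ and $(x,x,x)$ still give codimension $4$ via the conditions $\phi'=0$ and $\phi'=\phi''=0$, and that $\mathrm{codim}\,\chi(J)=2(|A|-\#A)$ persists for cliques with multiple points) merely makes explicit what the paper asserts in one line.
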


\begin{proof}
This dimension consists of three numbers:

a) the dimension $\rho(\J) $ of the space of cliques of our class (which does
not exceed the number $|A|$);

b) the dimension $\Delta-2(|A|-\#A)$ of the standard fiber $\chi(J)$ of the
first (vector) bundle;

c) the dimension of the order complex of subspaces associated with the clique
$J$.

The last dimension is equal to the length of the maximal monotone chain of
subspaces constituting our poset $\Pi(J)$ minus 1. This length is equal to
$|A|-2\#A$. Indeed, all our subspaces are of even codimension in $\K$, their
maximal element is the space $\chi(J)$ of codimension $2(|A|-\#A)$, and there
are exactly $\#A$ steps in the chain, when the dimension jumps by 4 (when we
start a new group of points; the very first step is one of them).

Finally, we obtain that the dimension of our block is not greater than $|A| +
\Delta-2(|A|-\#A)+ |A|-2\#A-1.$
\end{proof}

\noindent {\bf Corollary.} Calculating the invariants of doodles, we can
consider only the $\J$-blocks corresponding to such $A$-cliques $J$, that
$|A|-\rho(A) \le 1,$ i.e. either all their points are geometrically distinct or
there is at most one point of multiplicity 2. Moreover, the blocks of the
latter kind can provide only relations in the group of invariants $\bar
H_{\Delta-1}(\sigma)$, but not its generators.

\subsection{The third column of the spectral sequence.}

\thm \label{s3} The group $E^1_{3,q} = \bar H_{3+q}(\Diamond_3 \sm \Diamond_2)$
of the main spectral sequence is trivial for all numbers $q$ not equal to
$\Delta-6$ and $\Delta-7$, while $E^1_{3,\Delta-6} \simeq \Z \simeq
E^1_{3,\Delta-7}.$ \etheorem

The proof of this theorem occupies the rest of this subsection.
\medskip

The term $\Diamond_3 \sm \Diamond_2$ consists of exactly 4 $\J$-blocks
corresponding to different equivalence classes $\J$ of cliques of complexity 3:
the main block A (all 4 points in the cliques are distinct), the block B of
auxiliary filtration 3 (exactly two points coincide), and two blocks of
auxiliary filtration 2: C (one simple point and one point of multiplicity 3)
and D (two double points).

Consider the corresponding reduced blocks (see \S~\ref{primstrat}) $[A], [B],
[C]$ and $[D].$
\medskip

The main reduced block $[A]$ is the space of a fiber bundle, whose base $\J$ is
the configuration space $B(S^1,4)$ of subsets of cardinality 4 in $S^1,$ and
the fiber is a cross with its four endpoints removed. The vertices of such a
cross correspond to all possible choices of some 3 points of the 4-point
configuration; this notation is transparent in Fig.~\ref{str3dif}a.

The base $B(S^1,4),$ in its turn, is the space of the fiber bundle
\begin{equation}
\label{projconf} p: B(S^1,4) \to S^1
\end{equation}
where the projection sends a quadruple of points in $S^1 = \R/\Z$ to their sum
(mod $\Z$), and the fiber is diffeomorphic to an open 3-dimensional disc. The
monodromy over the basis circle of (\ref{projconf}) violates the orientation of
the bundle of 3-dimensional discs and acts on the bundle of crosses as a cyclic
permutation of their edges.

Thus the Wang exact sequence of this bundle gives us the following assertion.

\prop \label{mstrat3} The Borel--Moore homology group of the reduced main block
$[A]$
 of $\Diamond_3\sm \Diamond_2$
is nontrivial only in dimensions $5$ and $4$ and is isomorphic to $\Z$ in these
dimensions. The generator of the $5$-dimensional group is swept out by the
bundle over $B(S^1,4)$ of 1-chains shown in Fig.~\ref{one}b $\equiv$
Fig.~\ref{str3dif}a (i.e. the unique chains antiinvariant under the monodromy
action). The generator of the $4$-dimensional group is swept out by the fiber
bundle, whose base is the fiber $p^{-1}(0)$ of the bundle (\ref{projconf}), and
fibers are 1-chains shown in these pictures by the sum of two right-hand
arrows. \quad $\Box$ \eprop

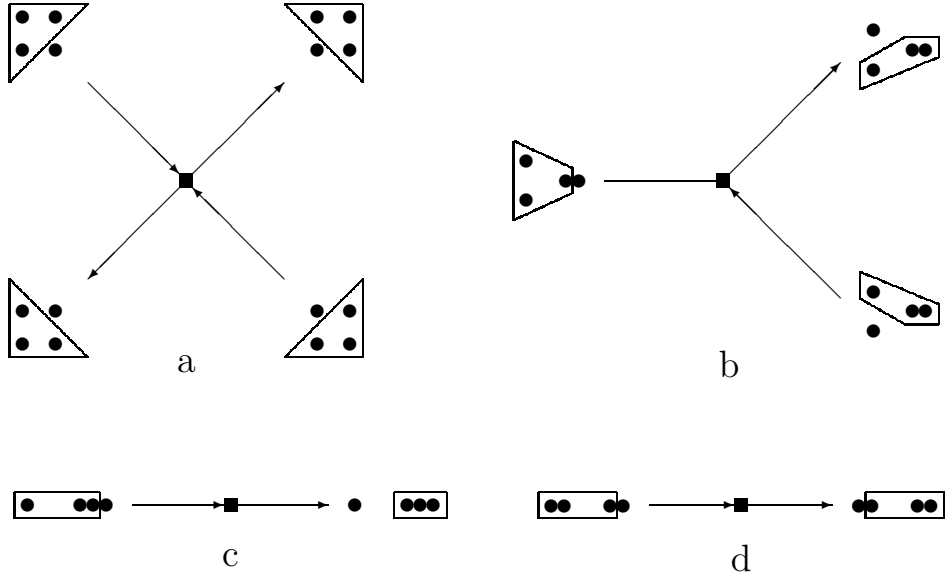
\begin{figure}
\begin{center}
\unitlength 0.87mm \linethickness{0.4pt}
\begin{picture}(143.00,57.00)
\put(3.00,5.00){\circle*{2.00}} \put(8.00,5.00){\circle*{2.00}}
\put(8.00,10.00){\circle*{2.00}} \put(3.00,10.00){\circle*{2.00}}
\put(27.00,29.00){\rule{2.00\unitlength}{2.00\unitlength}}
\put(43.00,15.00){\vector(-1,1){14.00}}
\put(27.00,29.00){\vector(-1,-1){14.00}}
\put(13.00,45.00){\vector(1,-1){14.00}} \put(29.00,31.00){\vector(1,1){14.00}}
\put(48.00,5.00){\circle*{2.00}} \put(53.00,5.00){\circle*{2.00}}
\put(53.00,10.00){\circle*{2.00}} \put(48.00,10.00){\circle*{2.00}}
\put(3.00,50.00){\circle*{2.00}} \put(8.00,50.00){\circle*{2.00}}
\put(8.00,55.00){\circle*{2.00}} \put(3.00,55.00){\circle*{2.00}}
\put(48.00,50.00){\circle*{2.00}} \put(53.00,50.00){\circle*{2.00}}
\put(53.00,55.00){\circle*{2.00}} \put(48.00,55.00){\circle*{2.00}}
\multiput(55.00,45.00)(-0.12,0.12){100}{\line(0,1){0.12}}
\put(43.00,57.00){\line(1,0){12.00}}
\put(55.00,57.00){\line(0,-1){12.00}}
\multiput(55.00,15.00)(-0.12,-0.12){100}{\line(0,-1){0.12}}
\put(43.00,3.00){\line(1,0){12.00}}
\put(55.00,3.00){\line(0,1){12.00}}
\multiput(1.00,15.00)(0.12,-0.12){100}{\line(0,-1){0.12}}
\put(13.00,3.00){\line(-1,0){12.00}}
\put(1.00,3.00){\line(0,1){12.00}}
\multiput(1.00,45.00)(0.12,0.12){100}{\line(0,1){0.12}}
\put(13.00,57.00){\line(-1,0){12.00}}
\put(1.00,57.00){\line(0,-1){12.00}}
\put(28.00,2.00){\makebox(0,0)[cc]{{\large a}}}
\put(80.00,27.00){\circle*{2.00}} \put(80.00,33.00){\circle*{2.00}}
\put(86.00,30.00){\circle*{2.00}} \put(88.00,30.00){\circle*{2.00}}
\put(87.00,28.00){\line(0,1){4.00}}
\multiput(87.00,32.00)(-0.26,0.12){34}{\line(-1,0){0.26}}
\put(78.00,36.00){\line(0,-1){12.00}}
\multiput(78.00,24.00)(0.26,0.12){34}{\line(1,0){0.26}}
\put(109.00,29.00){\rule{2.00\unitlength}{2.00\unitlength}}
\put(133.00,47.00){\circle*{2.00}} \put(133.00,53.00){\circle*{2.00}}
\put(139.00,50.00){\circle*{2.00}} \put(141.00,50.00){\circle*{2.00}}
\put(133.00,7.00){\circle*{2.00}} \put(133.00,13.00){\circle*{2.00}}
\put(139.00,10.00){\circle*{2.00}} \put(141.00,10.00){\circle*{2.00}}
\put(143.00,8.00){\line(-1,0){5.00}}
\multiput(138.00,8.00)(-0.21,0.12){34}{\line(-1,0){0.21}}
\put(131.00,12.00){\line(0,1){4.00}}
\multiput(131.00,16.00)(0.29,-0.12){42}{\line(1,0){0.29}}
\put(143.00,11.00){\line(0,-1){3.00}}
\put(143.00,52.00){\line(-1,0){5.00}}
\multiput(138.00,52.00)(-0.21,-0.12){34}{\line(-1,0){0.21}}
\put(131.00,48.00){\line(0,-1){4.00}}
\multiput(131.00,44.00)(0.29,0.12){42}{\line(1,0){0.29}}
\put(143.00,49.00){\line(0,1){3.00}}
\put(128.00,12.00){\vector(-1,1){17.00}}
\put(111.00,31.00){\vector(1,1){17.00}}
\put(109.00,30.00){\line(-1,0){17.00}}
\put(111.00,2.00){\makebox(0,0)[cc]{{\large b}}}
\end{picture}
\vspace{1.5cm} \unitlength 0.87mm \linethickness{0.4pt}
\begin{picture}(143.00,12.00)
\put(53.00,10.00){\circle*{2.00}} \put(61.00,10.00){\circle*{2.00}}
\put(63.00,10.00){\circle*{2.00}} \put(65.00,10.00){\circle*{2.00}}
\put(3.00,10.00){\circle*{2.00}} \put(11.00,10.00){\circle*{2.00}}
\put(13.00,10.00){\circle*{2.00}} \put(15.00,10.00){\circle*{2.00}}
\put(14.00,12.00){\line(0,-1){4.00}}
\put(1.00,8.00){\line(0,1){4.00}}
\put(59.00,8.00){\line(0,1){4.00}}
\put(59.00,12.00){\line(1,0){8.00}}
\put(67.00,12.00){\line(0,-1){4.00}}
\put(67.00,8.00){\line(-1,0){8.00}}
\put(33.00,9.00){\rule{2.00\unitlength}{2.00\unitlength}}
\put(34.00,2.00){\makebox(0,0)[cc]{{\large c}}}
\put(14.00,8.00){\line(-1,0){13.00}}
\put(1.00,12.00){\line(1,0){13.00}}
\put(83.00,10.00){\circle*{2.00}} \put(85.00,10.00){\circle*{2.00}}
\put(92.00,10.00){\circle*{2.00}} \put(94.00,10.00){\circle*{2.00}}
\put(130.00,10.00){\circle*{2.00}} \put(132.00,10.00){\circle*{2.00}}
\put(139.00,10.00){\circle*{2.00}} \put(141.00,10.00){\circle*{2.00}}
\put(131.00,8.00){\line(0,1){4.00}}
\put(131.00,12.00){\line(1,0){12.00}}
\put(143.00,12.00){\line(0,-1){4.00}}
\put(143.00,8.00){\line(-1,0){12.00}}
\put(93.00,8.00){\line(0,1){4.00}}
\put(93.00,12.00){\line(-1,0){12.00}}
\put(81.00,12.00){\line(0,-1){4.00}}
\put(81.00,8.00){\line(1,0){12.00}}
\put(111.00,9.00){\rule{2.00\unitlength}{2.00\unitlength}}
\put(113.00,10.00){\vector(1,0){13.00}} \put(98.00,10.00){\vector(1,0){13.00}}
\put(112.00,2.00){\makebox(0,0)[cc]{{\large d}}}
\put(19.00,10.00){\vector(1,0){14.00}} \put(35.00,10.00){\vector(1,0){14.00}}
\end{picture}
\end{center}
\caption{Order complexes for blocks in $\Diamond_3$} \label{str3dif}
\end{figure}

The block $[B]$ also is the space of a (trivial) fiber bundle, whose base is
the space of all configurations of 3 points in $S^1,$ one of which (the double
point) is distinguished, and the fiber is a star with 3 rays without endpoints.
These fibers and their endpoints are shown in Fig.~\ref{str3dif}b: two
right-hand endpoints correspond to the subspaces in $\K$ given by the
conditions of the form $\phi(x)=\phi(y), \phi'(x)=0$ and $\phi(x)=\phi(z),
\phi'(x)=0$, while the left endpoint corresponds to the equation
$\phi(x)=\phi(y)= \phi(z)$, cf. Example in \S~\ref{mainfil}.

The base of this bundle is diffeomorphic to the direct product $S^1 \times
B^2$, where $B^2$ is an open 2-dimensional disc. Monodromy along the basic
circle $S^1$ acts trivially on the bundle of 3-stars. Therefore we have the
following statement.

\prop \label{strB3} The Borel--Moore homology group of the reduced block $[B]$
is nontrivial only in dimensions $4$ and $3$ it is isomorphic to $\Z^2$ in both
these dimensions. \quad $\Box$ \eprop

In a similar way we get the following statements.

\prop \label{strC3} The Borel--Moore homology group of the reduced block $[C]$
is nontrivial only in dimensions $3$ and $2$ and is isomorphic to $\Z$ in both
these dimensions. \quad $\Box$ \eprop

\prop \label{strD3} The Borel--Moore homology group of the reduced block $[D]$
is nontrivial only in dimensions $3$ and $2$ and is isomorphic to $\Z$ in both
these dimensions. \quad $\Box$ \eprop

The corresponding order complexes are shown in Figs.~\ref{str3dif}c and
\ref{str3dif}d, respectively.
\medskip

\noindent {\bf Corollary.} {\em The term $E^1$ of the spectral sequence,
calculating the Borel--Moore homology group of the (not reduced) term
$\Diamond_3 \sm \Diamond_2$ of the main filtration of our resolved discriminant
and generated by the auxiliary filtration in this term, looks as is shown in
Fig.~\ref{ss3a} (i.e., all its cells $E^1_{p,q}$ other than six indicated there
are trivial).}

\begin{figure}
\begin{center}
\unitlength 1.00mm \linethickness{0.4pt}
\begin{picture}(52.00,24.00)
\put(12.00,6.00){\vector(0,1){18.00}} \put(12.00,6.00){\vector(1,0){40.00}}
\put(9.00,22.00){\makebox(0,0)[cc]{$q$}}
\put(5.00,9.00){\makebox(0,0)[cc]{$\Delta-6$}}
\put(5.00,15.00){\makebox(0,0)[cc]{$\Delta-5$}}
\put(17.00,2.00){\makebox(0,0)[cc]{2}} \put(17.00,9.00){\makebox(0,0)[cc]{${\bf
Z}^2$}} \put(17.00,15.00){\makebox(0,0)[cc]{${\bf Z}^2$}}
\put(26.00,9.00){\makebox(0,0)[cc]{${\bf Z}^2$}}
\put(26.00,15.00){\makebox(0,0)[cc]{${\bf Z}^2$}}
\put(35.00,15.00){\makebox(0,0)[cc]{${\bf Z}$}}
\put(35.00,9.00){\makebox(0,0)[cc]{${\bf Z}$}}
\put(35.00,2.00){\makebox(0,0)[cc]{4}} \put(26.00,2.00){\makebox(0,0)[cc]{3}}
\put(49.00,2.00){\makebox(0,0)[cc]{$a$}}
\put(12.00,12.00){\line(1,0){33.00}}
\put(45.00,18.00){\line(-1,0){33.00}}
\put(21.00,6.00){\line(0,1){15.00}}
\put(30.00,21.00){\line(0,-1){15.00}}
\put(39.00,6.00){\line(0,1){15.00}}
\end{picture}
\end{center}
\caption{Auxiliary spectral sequence for the column $p=3$.} \label{ss3a}
\end{figure}
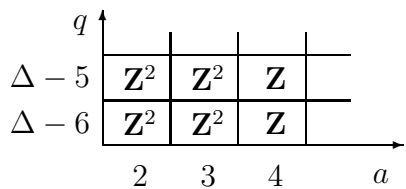

\prop \label{acyclABC} The Borel--Moore homology group of the subspace in
$\Diamond_3 \sm \Diamond_2$ formed only by the blocks A, B and C, is acyclic in
all dimensions. \eprop

This follows immediately from the shape of all these blocks and their
generators, and from accounting the limit positions of subspaces in $\K$
corresponding to the cliques of types A and B when they degenerate and form
configurations of types B and C, respectively.

E.g. let us consider the $(4)$-clique :: drawn at any endpoint of
Fig.~\ref{str3dif}a and let the two right-hand points of it move one towards
the other, forming at the last instant a clique of type B. The subspaces of
codimension 2 in $\K,$ corresponding to all endpoints of the cross, tend to
similar subspaces for endpoints of the star of Fig.~\ref{str3dif}b. Namely, to
both left-hand endpoints of the cross $\times$ there corresponds the unique
left-hand endpoint of the star, and other two endpoints "remain unmoved".
Therefore the boundary under this degeneration of the basic cycle shown in
Fig.~\ref{str3dif}a is equal to the basic cycle in $\bar H_{\Delta-3}(the \
block \ B)$ depicted by two arrows in Fig.~\ref{str3dif}b (i.e., swept out by
the fiber bundle of such 1-chains over entire base $S^1 \times B^2$ of this
block).

Other boundary operators $\bar H_*(A) \to \bar H_*(B)$ and $\bar H_*(B) \to
\bar H_*(C)$ can be considered in a similar way and give the assertion of
Proposition \ref{acyclABC}. \quad $\Box$
\medskip

\noindent {\bf Corollary.} {\em The Borel--Moore homology group of the space
$\Diamond_3 \sm \Diamond_2$ coincides with that of unique its block $D$
(described in Proposition \ref{strD3}).}
\medskip

This terminates the proof of Theorem \ref{s3}. \quad $\Box$

\subsection{Invariants of order 4.}

In this subsection, we shall be interested only in the $(\Delta-1)$-dimensional
homology group of the space $\Diamond_4 \sm \Diamond_3,$ which can provide
invariants of doodles.

In accordance with the Corollary of Proposition \ref{homeq2}, we shall consider
only blocks of complexity $4$ having at most one double point.

\prop \label{classcomp4} There are exactly 4 $\J$-blocks of complexity 4
corresponding to classes $\J$ of $A$-configurations (i.e. of $A$-cliques, all
whose points are geometrically distinct). Three of them correspond to
$(3,3)$-cliques consisting of 6 points in $S^1$ separated into two triples in
one of ways shown in Fig.~\ref{two}a. The fourth corresponds to the unique
class of $(5)$-configurations.

Also, there are exactly 5 blocks of complexity 4 corresponding to cliques with
exactly one point of multiplicity 2. Four of them also are of type $A=(3,3)$
and can be obtained from first two pictures in Fig.~\ref{two}a by some
degenerations, see Figs.~\ref{degen}a and \ref{degen}b respectively. The fifth
$\J$-block corresponds to $(5)$-cliques with exactly one double points, see
Fig.~\ref{degen}c. \quad $\Box$ \eprop

\begin{figure}
\unitlength 1.00mm \linethickness{0.4pt}
\begin{picture}(124.67,17.67)
\multiput(10.00,17.00)(0.79,-0.09){2}{\line(1,0){0.79}}
\multiput(11.59,16.82)(0.30,-0.11){5}{\line(1,0){0.30}}
\multiput(13.09,16.28)(0.17,-0.11){8}{\line(1,0){0.17}}
\multiput(14.43,15.42)(0.11,-0.11){10}{\line(0,-1){0.11}}
\multiput(15.54,14.27)(0.12,-0.20){7}{\line(0,-1){0.20}}
\multiput(16.37,12.91)(0.10,-0.30){5}{\line(0,-1){0.30}}
\multiput(16.86,11.39)(0.07,-0.80){2}{\line(0,-1){0.80}}
\multiput(17.00,9.80)(-0.11,-0.79){2}{\line(0,-1){0.79}}
\multiput(16.77,8.22)(-0.12,-0.30){5}{\line(0,-1){0.30}}
\multiput(16.19,6.73)(-0.11,-0.16){8}{\line(0,-1){0.16}}
\multiput(15.29,5.42)(-0.13,-0.12){9}{\line(-1,0){0.13}}
\multiput(14.11,4.34)(-0.20,-0.11){7}{\line(-1,0){0.20}}
\multiput(12.72,3.55)(-0.38,-0.11){4}{\line(-1,0){0.38}}
\put(11.19,3.10){\line(-1,0){1.59}}
\multiput(9.60,3.01)(-0.52,0.09){3}{\line(-1,0){0.52}}
\multiput(8.03,3.28)(-0.25,0.10){6}{\line(-1,0){0.25}}
\multiput(6.56,3.90)(-0.16,0.12){8}{\line(-1,0){0.16}}
\multiput(5.27,4.84)(-0.12,0.13){9}{\line(0,1){0.13}}
\multiput(4.22,6.05)(-0.11,0.20){7}{\line(0,1){0.20}}
\multiput(3.48,7.46)(-0.10,0.39){4}{\line(0,1){0.39}}
\put(3.07,9.00){\line(0,1){1.60}}
\multiput(3.03,10.60)(0.11,0.52){3}{\line(0,1){0.52}}
\multiput(3.34,12.16)(0.11,0.24){6}{\line(0,1){0.24}}
\multiput(4.01,13.61)(0.11,0.14){9}{\line(0,1){0.14}}
\multiput(4.98,14.88)(0.14,0.11){9}{\line(1,0){0.14}}
\multiput(6.22,15.89)(0.24,0.12){6}{\line(1,0){0.24}}
\multiput(7.65,16.59)(0.59,0.10){4}{\line(1,0){0.59}}
\put(4.00,7.00){\line(1,0){12.00}}
\multiput(16.00,7.00)(-0.18,-0.12){34}{\line(-1,0){0.18}}
\multiput(10.00,3.00)(-0.18,0.12){34}{\line(-1,0){0.18}}
\multiput(4.00,13.00)(0.35,0.12){26}{\line(1,0){0.35}}
\put(10.00,3.00){\circle*{1.33}} \put(16.00,7.00){\circle*{1.33}}
\put(4.00,7.00){\circle*{1.33}} \put(4.00,13.00){\circle*{1.33}}
\multiput(30.00,17.00)(0.79,-0.09){2}{\line(1,0){0.79}}
\multiput(31.59,16.82)(0.30,-0.11){5}{\line(1,0){0.30}}
\multiput(33.09,16.28)(0.17,-0.11){8}{\line(1,0){0.17}}
\multiput(34.43,15.42)(0.11,-0.11){10}{\line(0,-1){0.11}}
\multiput(35.54,14.27)(0.12,-0.20){7}{\line(0,-1){0.20}}
\multiput(36.37,12.91)(0.10,-0.30){5}{\line(0,-1){0.30}}
\multiput(36.86,11.39)(0.07,-0.80){2}{\line(0,-1){0.80}}
\multiput(37.00,9.80)(-0.11,-0.79){2}{\line(0,-1){0.79}}
\multiput(36.77,8.22)(-0.12,-0.30){5}{\line(0,-1){0.30}}
\multiput(36.19,6.73)(-0.11,-0.16){8}{\line(0,-1){0.16}}
\multiput(35.29,5.42)(-0.13,-0.12){9}{\line(-1,0){0.13}}
\multiput(34.11,4.34)(-0.20,-0.11){7}{\line(-1,0){0.20}}
\multiput(32.72,3.55)(-0.38,-0.11){4}{\line(-1,0){0.38}}
\put(31.19,3.10){\line(-1,0){1.59}}
\multiput(29.60,3.01)(-0.52,0.09){3}{\line(-1,0){0.52}}
\multiput(28.03,3.28)(-0.25,0.10){6}{\line(-1,0){0.25}}
\multiput(26.56,3.90)(-0.16,0.12){8}{\line(-1,0){0.16}}
\multiput(25.27,4.84)(-0.12,0.13){9}{\line(0,1){0.13}}
\multiput(24.22,6.05)(-0.11,0.20){7}{\line(0,1){0.20}}
\multiput(23.48,7.46)(-0.10,0.39){4}{\line(0,1){0.39}}
\put(23.07,9.00){\line(0,1){1.60}}
\multiput(23.03,10.60)(0.11,0.52){3}{\line(0,1){0.52}}
\multiput(23.34,12.16)(0.11,0.24){6}{\line(0,1){0.24}}
\multiput(24.01,13.61)(0.11,0.14){9}{\line(0,1){0.14}}
\multiput(24.98,14.88)(0.14,0.11){9}{\line(1,0){0.14}}
\multiput(26.22,15.89)(0.24,0.12){6}{\line(1,0){0.24}}
\multiput(27.65,16.59)(0.59,0.10){4}{\line(1,0){0.59}}
\put(24.00,7.00){\line(1,0){12.00}}
\multiput(36.00,7.00)(-0.18,-0.12){34}{\line(-1,0){0.18}}
\multiput(30.00,3.00)(-0.18,0.12){34}{\line(-1,0){0.18}}
\put(30.00,3.00){\circle*{1.33}} \put(36.00,7.00){\circle*{1.33}}
\put(24.00,7.00){\circle*{1.33}}
\multiput(56.00,17.00)(0.79,-0.09){2}{\line(1,0){0.79}}
\multiput(57.59,16.82)(0.30,-0.11){5}{\line(1,0){0.30}}
\multiput(59.09,16.28)(0.17,-0.11){8}{\line(1,0){0.17}}
\multiput(60.43,15.42)(0.11,-0.11){10}{\line(0,-1){0.11}}
\multiput(61.54,14.27)(0.12,-0.20){7}{\line(0,-1){0.20}}
\multiput(62.37,12.91)(0.10,-0.30){5}{\line(0,-1){0.30}}
\multiput(62.86,11.39)(0.07,-0.80){2}{\line(0,-1){0.80}}
\multiput(63.00,9.80)(-0.11,-0.79){2}{\line(0,-1){0.79}}
\multiput(62.77,8.22)(-0.12,-0.30){5}{\line(0,-1){0.30}}
\multiput(62.19,6.73)(-0.11,-0.16){8}{\line(0,-1){0.16}}
\multiput(61.29,5.42)(-0.13,-0.12){9}{\line(-1,0){0.13}}
\multiput(60.11,4.34)(-0.20,-0.11){7}{\line(-1,0){0.20}}
\multiput(58.72,3.55)(-0.38,-0.11){4}{\line(-1,0){0.38}}
\put(57.19,3.10){\line(-1,0){1.59}}
\multiput(55.60,3.01)(-0.52,0.09){3}{\line(-1,0){0.52}}
\multiput(54.03,3.28)(-0.25,0.10){6}{\line(-1,0){0.25}}
\multiput(52.56,3.90)(-0.16,0.12){8}{\line(-1,0){0.16}}
\multiput(51.27,4.84)(-0.12,0.13){9}{\line(0,1){0.13}}
\multiput(50.22,6.05)(-0.11,0.20){7}{\line(0,1){0.20}}
\multiput(49.48,7.46)(-0.10,0.39){4}{\line(0,1){0.39}}
\put(49.07,9.00){\line(0,1){1.60}}
\multiput(49.03,10.60)(0.11,0.52){3}{\line(0,1){0.52}}
\multiput(49.34,12.16)(0.11,0.24){6}{\line(0,1){0.24}}
\multiput(50.01,13.61)(0.11,0.14){9}{\line(0,1){0.14}}
\multiput(50.98,14.88)(0.14,0.11){9}{\line(1,0){0.14}}
\multiput(52.22,15.89)(0.24,0.12){6}{\line(1,0){0.24}}
\multiput(53.65,16.59)(0.59,0.10){4}{\line(1,0){0.59}}
\multiput(56.00,3.00)(-0.18,0.12){34}{\line(-1,0){0.18}}
\put(56.00,3.00){\circle*{1.33}} \put(62.00,7.00){\circle*{1.33}}
\put(50.00,7.00){\circle*{1.33}}
\multiput(76.00,17.00)(0.79,-0.09){2}{\line(1,0){0.79}}
\multiput(77.59,16.82)(0.30,-0.11){5}{\line(1,0){0.30}}
\multiput(79.09,16.28)(0.17,-0.11){8}{\line(1,0){0.17}}
\multiput(80.43,15.42)(0.11,-0.11){10}{\line(0,-1){0.11}}
\multiput(81.54,14.27)(0.12,-0.20){7}{\line(0,-1){0.20}}
\multiput(82.37,12.91)(0.10,-0.30){5}{\line(0,-1){0.30}}
\multiput(82.86,11.39)(0.07,-0.80){2}{\line(0,-1){0.80}}
\multiput(83.00,9.80)(-0.11,-0.79){2}{\line(0,-1){0.79}}
\multiput(82.77,8.22)(-0.12,-0.30){5}{\line(0,-1){0.30}}
\multiput(82.19,6.73)(-0.11,-0.16){8}{\line(0,-1){0.16}}
\multiput(81.29,5.42)(-0.13,-0.12){9}{\line(-1,0){0.13}}
\multiput(80.11,4.34)(-0.20,-0.11){7}{\line(-1,0){0.20}}
\multiput(78.72,3.55)(-0.38,-0.11){4}{\line(-1,0){0.38}}
\put(77.19,3.10){\line(-1,0){1.59}}
\multiput(75.60,3.01)(-0.52,0.09){3}{\line(-1,0){0.52}}
\multiput(74.03,3.28)(-0.25,0.10){6}{\line(-1,0){0.25}}
\multiput(72.56,3.90)(-0.16,0.12){8}{\line(-1,0){0.16}}
\multiput(71.27,4.84)(-0.12,0.13){9}{\line(0,1){0.13}}
\multiput(70.22,6.05)(-0.11,0.20){7}{\line(0,1){0.20}}
\multiput(69.48,7.46)(-0.10,0.39){4}{\line(0,1){0.39}}
\put(69.07,9.00){\line(0,1){1.60}}
\multiput(69.03,10.60)(0.11,0.52){3}{\line(0,1){0.52}}
\multiput(69.34,12.16)(0.11,0.24){6}{\line(0,1){0.24}}
\multiput(70.01,13.61)(0.11,0.14){9}{\line(0,1){0.14}}
\multiput(70.98,14.88)(0.14,0.11){9}{\line(1,0){0.14}}
\multiput(72.22,15.89)(0.24,0.12){6}{\line(1,0){0.24}}
\multiput(73.65,16.59)(0.59,0.10){4}{\line(1,0){0.59}}
\put(82.00,7.00){\circle*{1.33}} \put(12.33,16.34){\circle*{1.33}}
\put(13.67,16.00){\circle*{1.33}} \put(36.00,13.00){\circle*{1.33}}
\multiput(36.00,13.00)(-0.42,0.12){23}{\line(-1,0){0.42}}
\put(27.00,16.00){\circle*{1.33}} \put(25.99,15.66){\circle*{1.33}}
\put(62.00,13.33){\circle*{1.33}}
\multiput(62.00,13.33)(-0.12,-0.21){50}{\line(0,-1){0.21}}
\multiput(50.00,7.00)(0.23,0.12){53}{\line(1,0){0.23}}
\multiput(62.00,7.00)(-0.16,0.12){67}{\line(-1,0){0.16}}
\put(52.00,15.67){\circle*{1.33}} \put(51.00,14.66){\circle*{1.33}}
\put(82.00,13.00){\circle*{1.33}}
\multiput(82.00,13.00)(-0.12,-0.12){78}{\line(-1,0){0.12}}
\put(72.00,4.00){\circle*{1.33}} \put(73.34,3.66){\circle*{1.33}}
\multiput(82.00,7.00)(-0.12,0.20){50}{\line(0,1){0.20}}
\multiput(76.00,17.00)(-0.19,-0.12){34}{\line(-1,0){0.19}}
\multiput(69.67,13.00)(0.25,-0.12){50}{\line(1,0){0.25}}
\put(76.00,16.67){\circle*{1.33}} \put(70.00,13.00){\circle*{1.33}}
\multiput(117.00,17.00)(0.79,-0.09){2}{\line(1,0){0.79}}
\multiput(118.59,16.82)(0.30,-0.11){5}{\line(1,0){0.30}}
\multiput(120.09,16.28)(0.17,-0.11){8}{\line(1,0){0.17}}
\multiput(121.43,15.42)(0.11,-0.11){10}{\line(0,-1){0.11}}
\multiput(122.54,14.27)(0.12,-0.20){7}{\line(0,-1){0.20}}
\multiput(123.37,12.91)(0.10,-0.30){5}{\line(0,-1){0.30}}
\multiput(123.86,11.39)(0.07,-0.80){2}{\line(0,-1){0.80}}
\multiput(124.00,9.80)(-0.11,-0.79){2}{\line(0,-1){0.79}}
\multiput(123.77,8.22)(-0.12,-0.30){5}{\line(0,-1){0.30}}
\multiput(123.19,6.73)(-0.11,-0.16){8}{\line(0,-1){0.16}}
\multiput(122.29,5.42)(-0.13,-0.12){9}{\line(-1,0){0.13}}
\multiput(121.11,4.34)(-0.20,-0.11){7}{\line(-1,0){0.20}}
\multiput(119.72,3.55)(-0.38,-0.11){4}{\line(-1,0){0.38}}
\put(118.19,3.10){\line(-1,0){1.59}}
\multiput(116.60,3.01)(-0.52,0.09){3}{\line(-1,0){0.52}}
\multiput(115.03,3.28)(-0.25,0.10){6}{\line(-1,0){0.25}}
\multiput(113.56,3.90)(-0.16,0.12){8}{\line(-1,0){0.16}}
\multiput(112.27,4.84)(-0.12,0.13){9}{\line(0,1){0.13}}
\multiput(111.22,6.05)(-0.11,0.20){7}{\line(0,1){0.20}}
\multiput(110.48,7.46)(-0.10,0.39){4}{\line(0,1){0.39}}
\put(110.07,9.00){\line(0,1){1.60}}
\multiput(110.03,10.60)(0.11,0.52){3}{\line(0,1){0.52}}
\multiput(110.34,12.16)(0.11,0.24){6}{\line(0,1){0.24}}
\multiput(111.01,13.61)(0.11,0.14){9}{\line(0,1){0.14}}
\multiput(111.98,14.88)(0.14,0.11){9}{\line(1,0){0.14}}
\multiput(113.22,15.89)(0.24,0.12){6}{\line(1,0){0.24}}
\multiput(114.65,16.59)(0.59,0.10){4}{\line(1,0){0.59}}
\put(124.00,10.00){\circle*{1.33}} \put(110.00,10.00){\circle*{1.33}}
\put(117.00,17.00){\circle*{1.33}} \put(116.33,3.00){\circle*{1.33}}
\put(117.67,3.00){\circle*{1.33}} \put(109.00,1.00){\makebox(0,0)[cc]{{\large
c}}} \put(66.00,1.00){\makebox(0,0)[cc]{{\large b}}}
\put(20.00,1.00){\makebox(0,0)[cc]{{\large a}}}
\end{picture}
\caption{Degenerated blocks of complexity 4} \label{degen}
\end{figure}
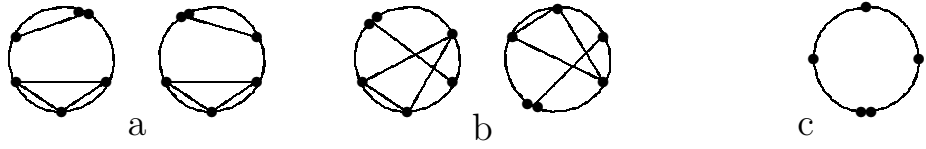

Let us study all these $\J$-blocks and their homology groups.

\prop \label{two_hom} All three $\J$-blocks corresponding to three pictures of
Fig.~\ref{two}a are smooth orientable manifolds diffeomorphic to $S^1 \times
\R^{\Delta-2},$ in particular any of them has only two nontrivial Borel--Moore
homology groups $\bar H_{\Delta-1} \simeq \Z \simeq \bar H_{\Delta-2}.$ \quad
$\Box$ \eprop

More precisely, in the first and the third (respectively, in the second) case
the corresponding space $\J$ of equivalent cliques is diffeomorphic to a
nonorientable (respectively, orientable) fiber bundle over $S^1$ with fiber
$B^5.$ The order complexes $\Diamond(J)$ in all cases are homeomorphic to open
intervals, whose endpoints correspond to subspaces of codimension 2 in $\K$
defined by the $(3)$-cliques forming the triangles, and the center corresponds
to the subspace of codimension 4, defined by their intersection. The bundle of
these intervals is (non)orientable exactly in the same cases when the
corresponding configuration space is.

\prop \label{three_hom} The $(\Delta-1)$-dimensional Borel--Moore homology
group of the $\J$-block in $\Diamond_4 \sm \Diamond_3,$ corresponding to the
unique class $\J$ of $(5)$-{\em configurations}, is equal to $\Z^2.$ \eprop

(I thank very much A.~B.~Merkov, who proved this proposition, and also
suggested the following notation, convenient for the homological study of such
blocks.)

\begin{proof} Let $J$ be a configuration of 5 different points in
$S^1$. The corresponding order complex $\Diamond(J)$ is two-dimensional. Its 20
simplices of dimension 2 are in a natural one-to-one correspondence with the
triples of the form \{some 3 points of $J$; some 4 points containing these
three; all five points\}. Denote such a simplex by the arrow, connecting two
points {\em not participating} in the first triple and directed towards the
point not participating in the quadruple. The one-dimensional simplices of the
same order complex $\Diamond(J)$ are the segments of the following three kinds.

A) connecting (the vertex corresponding to the subspace in $\K$ defined by) a
triple of our points and (the vertex corresponding to its subspace defined by)
a quadruple containing this triple. Such edges do not belong to $\Diamond_4$
and are not interesting for us.

B) connecting (the vertex corresponding to) a triple and (that corresponding
to) the maximal element $\chi(J) \in \Diamond(J)$ (defined by the entire
$(5)$-clique $J$). These 10 edges are denoted by non-oriented edges connecting
two points not participating in the triple.

C) connecting (the vertices corresponding to) a quadruple and $\chi(J)$. These
5 edges are denoted by marking the point not participating in the quadruple.
\medskip

In this notation, the boundary of an arrow (i.e., a 2-simplex of $\Diamond(J)$)
is equal to the edge, obtained from this arrow by forgetting the orientation,
minus the endpoint of the arrow.

\begin{lemma}
The Borel--Moore homology group of this complex is located in dimension $2$ and
is isomorphic to $\Z^6.$
\end{lemma}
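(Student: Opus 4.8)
The plan is to read off the relative cellular chain complex of $\Diamond(J)$ directly from the arrow/edge notation just introduced, and then identify the top homology with the cycle space of a graph. Since we want the Borel--Moore homology of the open part $\Diamond(J)\sm(\Diamond(J)\cap\sigma_{j-1})$, we quotient out the marginal cells, i.e.\ all faces not containing the main vertex $\chi(J)$: these are the $1$-simplices of type A and every $0$-simplex other than $\chi(J)$ itself. What remains is the complex $C_2$ generated by the $20$ arrows (equivalently the ordered pairs $(p\to q)$ of distinct points of $J$, the complementary three points forming the triple), $C_1$ generated by the $10$ type B edges (indexed by unordered pairs $\{p,q\}$, i.e.\ by triples) together with the $5$ type C generators (indexed by the marked point $q$, i.e.\ by quadruples), and $C_0\simeq\Z$ generated by $\chi(J)$ alone.

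Using the stated boundary rule, and recalling that the type A face of each arrow is marginal and therefore dies in this relative complex, the second boundary becomes
\[
\partial_2(p\to q)\;=\;\{p,q\}\;-\;q ,
\]
where $\{p,q\}$ is the type B edge obtained by forgetting the orientation and $q$ is the type C generator at the head of the arrow. Likewise $\partial_1$ sends every type B and every type C generator to $\chi(J)$ (its lower endpoint, a triple or a quadruple, being marginal), so $\partial_1$ is surjective and $H_0=0$.

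The main computation is $H_2=\ker\partial_2$, and this is the step I expect to carry the content. Writing a general $2$-chain as $\sum a_{pq}(p\to q)$, the coefficient of the type B generator $\{p,q\}$ in $\partial_2$ is $a_{pq}+a_{qp}$, while the coefficient of the type C generator $q$ is $-\sum_{p}a_{pq}$. Hence a chain is a cycle exactly when $a$ is antisymmetric, $a_{qp}=-a_{pq}$, and has vanishing divergence, $\sum_{p}a_{pq}=0$ for every $q$. This is precisely the condition that $a$ be a $1$-cycle on the complete graph on the five points of $J$, so $\ker\partial_2$ is the cycle space of that graph, a free abelian group of rank $\binom{5}{2}-5+1=6$, giving $H_2\simeq\Z^6$ as claimed.

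It remains to see that nothing survives below the top dimension and that there is no torsion, and the cleanest confirmation uses the poset structure. Because $\chi(J)$ is the unique maximal element of $\Pi(J)$, the order complex $\Diamond(J)$ is a cone with apex $\chi(J)$, whose base (the marginal part) is the order complex of the remaining poset of triples and quadruples ordered by inclusion --- that is, the bipartite incidence graph of the $3$- and $4$-element subsets of $J$, with $15$ vertices and $20$ edges. This graph is connected, so its reduced homology is free and concentrated in degree $1$, of rank $20-15+1=6$; since the Borel--Moore homology of a cone relative to its base is the reduced homology of the base shifted by one, this simultaneously reconfirms $H_2\simeq\Z^6$ and yields $H_1=H_0=0$ with no torsion. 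The only delicate point is the bookkeeping of which faces are marginal and which contain $\chi(J)$; once that is fixed the direct chain computation and the cone picture agree, and the lemma follows.
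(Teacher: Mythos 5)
Your proof is correct, and its main computation is the same as the paper's: your antisymmetry condition $a_{qp}=-a_{pq}$, read off from the type B edges, is exactly the paper's observation that each arrow can occur in a cycle only together with its opposite taken with opposite coefficient (whence the ``double arrows''), and your divergence condition $\sum_p a_{pq}=0$, read off from the type C generators, is exactly the paper's identification of the $2$-cycles with the cycles of the complete graph on the $5$ points, whose $H_1$ has rank $\binom{5}{2}-5+1=6$. Where you genuinely add something is the second half: the paper's proof only exhibits $\ker\partial_2\simeq\Z^6$ and is silent on the vanishing of $\bar H_1$ and $\bar H_0$, i.e.\ on the ``located in dimension~$2$'' clause of the lemma, while you settle it by noting that $\Diamond(J)$, being the order complex of a poset with unique maximal element $\chi(J)$, is a cone with apex $\chi(J)$ over the marginal part, which is the connected bipartite incidence graph of triples versus quadruples ($15$ vertices, $20$ edges); the Borel--Moore homology of the open part is then the once-shifted reduced homology of this graph, which is free of rank $20-15+1=6$ in degree $1$ and trivial in degree $0$. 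This simultaneously kills $\bar H_1$ and $\bar H_0$, rules out torsion (which a pure rank count via $\partial_1$ surjectivity would not), and cross-checks the rank $6$ by a different count --- a worthwhile tightening of the paper's argument at essentially no extra cost.
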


\begin{proof}
Indeed, the generating it 2-cycles look as follows.

First of all, any arrow can appear in such a cycle only together with its
opposite, taken with opposite coefficient: otherwise the boundary of this cycle
will contain an edge of type B) with a non-zero coefficient. Thus it is
sufficient to consider the 10-dimensional group, generated by the linear
combinations of the form \{an arrow minus its opposite\}. Such an element will
be depicted by a double arrow directed as the first arrow in this combination,
see Fig.~\ref{two}b. A linear combination of such double arrows is a cycle of
the complex of closed chains of $\Diamond(J) \sm \Diamond_3$ if and only if the
correspondingly oriented segments form a cycle of the complete graph with 5
vertices. The group $H_1$ of this complete graph is isomorphic to $\Z^6,$ and
lemma is proved. \end{proof}

Further, our $\J$-block is the space of a fiber bundle, whose base is the
configuration space $B(S^1,5) \cong S^1 \times B^4,$ and the fiber over the
configuration $J$ is the direct product of the oriented
$(\Delta-8)$-dimensional subspace $\chi(J) \subset \K$ and the complex
$\Diamond(J) \sm \Diamond_3$. The monodromy over the generator $S^1$ of the
fundamental group of the base acts on this complex (and its homology) as a
cyclic permutation of 5 vertices. Thus by the Wang exact sequence the group
considered in Proposition \ref{three_hom} is generated exactly by all cycles of
a complete 5-graph which are invariant under this action. This group is
two-dimensional; its generators are shown in Fig.~\ref{two}b.
\end{proof}

We have found all possible generators of the group
\begin{equation}
\label{sig4} \bar H_{\Delta-1}(\Diamond_4 \sm \Diamond_3),
\end{equation}
namely the following statement holds.

\prop \label{gensig4} Any element of the group (\ref{sig4}) is a linear
combination of five chains shown in Fig.~\ref{two}. \quad $\Box$ \eprop

Now let us study the boundaries of these chains in other blocks.

\prop \label{trivtwo} Two chains corresponding to two left pictures in
Fig.~\ref{two}a cannot participate in an element of the group (\ref{sig4}) with
nonzero coefficients. \eprop

Indeed, the boundary of the first (respectively, the second) of them contains
the sum of generators of $(\Delta-2)$-dimensional homology groups of two blocks
shown in Fig.~\ref{degen}a (respectively, \ref{degen}b). These generators do
not appear in the boundaries of any other of our 5 chains. \quad $\Box$
\medskip

\prop \label{triv3} Two chains corresponding to two pictures in Fig.~\ref{two}b
cannot participate in an element of the group (\ref{sig4}) with nonzero
coefficients. \eprop

To prove this proposition, let us consider the homology group of the
$\J$-block, corresponding to $(5)$-cliques $J = (x,x,y,z,w)$, as shown in
Fig.~\ref{degen}c. The corresponding order complex $\Diamond(J)$ again is
two-dimensional.

\prop \label{rel4} For any $(5)$-clique $J$ consisting of exactly 4
geometrically different points in $S^1,$ the group $\bar H_*(\Diamond(J) \sm
\Diamond_3)$ is concentrated in dimension 2 and is isomorphic to $\Z^3$. The
Borel--Moore homology group of the corresponding block in $\Diamond_4 \sm
\Diamond_3$ is concentrated in dimension $\Delta-2$ and also is isomorphic to
$\Z^3$. \eprop

\begin{proof}
Exactly as in the proof of Proposition \ref{three_hom}, almost all
two-di\-men\-si\-onal simplices of the complex $\Diamond(J)$ are naturally
depicted by arrows connecting some of 4 geometrically distinct points of the
clique $J$. (The unique extra triangle is the triple of subspaces, whose first
element is defined by three points of multiplicity 1: it should be denoted by a
loop edge, connecting the double point with itself. This triangle cannot
participate with non-zero coefficient in any cycle of the complex $\Diamond(J)
\sm \Diamond_3$.)

Again, the cycles of this complex are depicted by double arrows, forming cycles
(in the usual sense) of the complete graph on our 4 vertices. This proves the
first assertion of Proposition \ref{rel4}.

The entire $J$-block is the space of a fiber bundle over the space of all
cliques of this type (which is diffeomorphic to $S^1 \times B^3$), namely, of a
fibered product of an orientable $(\Delta-8)$-dimensional vector bundle and the
(trivial) bundle of complexes $\Diamond(J)\sm \Diamond_3$. This proves the last
assertion of Proposition. Moreover, the $(\Delta-2)$-dimensional cycles of the
block are in a one-to-one (K\"unneth) correspondence with 2-dimensional cycles
of $\Diamond(J) \sm \Diamond_3,$ where $J$ is any clique of this class.
\end{proof}

\prop \label{bound3} The boundaries in this block of two basic
$(\Delta-1)$-dimensional cycles, shown in Fig.~\ref{two}b, are the two cycles
shown in Fig.~\ref{boun3}a. \quad $\Box$ \eprop

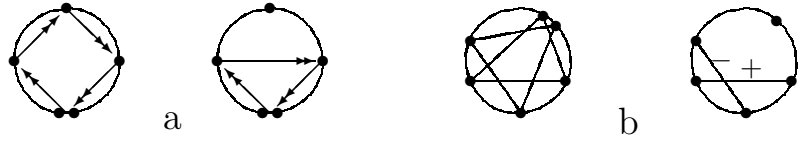
\begin{figure}
\begin{center}
\unitlength 1.00mm \linethickness{0.4pt}
\begin{picture}(107.00,17.67)
\multiput(10.00,17.00)(0.79,-0.09){2}{\line(1,0){0.79}}
\multiput(11.59,16.82)(0.30,-0.11){5}{\line(1,0){0.30}}
\multiput(13.09,16.28)(0.17,-0.11){8}{\line(1,0){0.17}}
\multiput(14.43,15.42)(0.11,-0.11){10}{\line(0,-1){0.11}}
\multiput(15.54,14.27)(0.12,-0.20){7}{\line(0,-1){0.20}}
\multiput(16.37,12.91)(0.10,-0.30){5}{\line(0,-1){0.30}}
\multiput(16.86,11.39)(0.07,-0.80){2}{\line(0,-1){0.80}}
\multiput(17.00,9.80)(-0.11,-0.79){2}{\line(0,-1){0.79}}
\multiput(16.77,8.22)(-0.12,-0.30){5}{\line(0,-1){0.30}}
\multiput(16.19,6.73)(-0.11,-0.16){8}{\line(0,-1){0.16}}
\multiput(15.29,5.42)(-0.13,-0.12){9}{\line(-1,0){0.13}}
\multiput(14.11,4.34)(-0.20,-0.11){7}{\line(-1,0){0.20}}
\multiput(12.72,3.55)(-0.38,-0.11){4}{\line(-1,0){0.38}}
\put(11.19,3.10){\line(-1,0){1.59}}
\multiput(9.60,3.01)(-0.52,0.09){3}{\line(-1,0){0.52}}
\multiput(8.03,3.28)(-0.25,0.10){6}{\line(-1,0){0.25}}
\multiput(6.56,3.90)(-0.16,0.12){8}{\line(-1,0){0.16}}
\multiput(5.27,4.84)(-0.12,0.13){9}{\line(0,1){0.13}}
\multiput(4.22,6.05)(-0.11,0.20){7}{\line(0,1){0.20}}
\multiput(3.48,7.46)(-0.10,0.39){4}{\line(0,1){0.39}}
\put(3.07,9.00){\line(0,1){1.60}}
\multiput(3.03,10.60)(0.11,0.52){3}{\line(0,1){0.52}}
\multiput(3.34,12.16)(0.11,0.24){6}{\line(0,1){0.24}}
\multiput(4.01,13.61)(0.11,0.14){9}{\line(0,1){0.14}}
\multiput(4.98,14.88)(0.14,0.11){9}{\line(1,0){0.14}}
\multiput(6.22,15.89)(0.24,0.12){6}{\line(1,0){0.24}}
\multiput(7.65,16.59)(0.59,0.10){4}{\line(1,0){0.59}}
\put(9.00,3.00){\circle*{1.33}} \put(11.00,3.00){\circle*{1.33}}
\put(17.00,10.00){\circle*{1.33}} \put(10.00,17.00){\circle*{1.33}}
\put(3.00,10.00){\circle*{1.33}}
\multiput(37.00,17.00)(0.79,-0.09){2}{\line(1,0){0.79}}
\multiput(38.59,16.82)(0.30,-0.11){5}{\line(1,0){0.30}}
\multiput(40.09,16.28)(0.17,-0.11){8}{\line(1,0){0.17}}
\multiput(41.43,15.42)(0.11,-0.11){10}{\line(0,-1){0.11}}
\multiput(42.54,14.27)(0.12,-0.20){7}{\line(0,-1){0.20}}
\multiput(43.37,12.91)(0.10,-0.30){5}{\line(0,-1){0.30}}
\multiput(43.86,11.39)(0.07,-0.80){2}{\line(0,-1){0.80}}
\multiput(44.00,9.80)(-0.11,-0.79){2}{\line(0,-1){0.79}}
\multiput(43.77,8.22)(-0.12,-0.30){5}{\line(0,-1){0.30}}
\multiput(43.19,6.73)(-0.11,-0.16){8}{\line(0,-1){0.16}}
\multiput(42.29,5.42)(-0.13,-0.12){9}{\line(-1,0){0.13}}
\multiput(41.11,4.34)(-0.20,-0.11){7}{\line(-1,0){0.20}}
\multiput(39.72,3.55)(-0.38,-0.11){4}{\line(-1,0){0.38}}
\put(38.19,3.10){\line(-1,0){1.59}}
\multiput(36.60,3.01)(-0.52,0.09){3}{\line(-1,0){0.52}}
\multiput(35.03,3.28)(-0.25,0.10){6}{\line(-1,0){0.25}}
\multiput(33.56,3.90)(-0.16,0.12){8}{\line(-1,0){0.16}}
\multiput(32.27,4.84)(-0.12,0.13){9}{\line(0,1){0.13}}
\multiput(31.22,6.05)(-0.11,0.20){7}{\line(0,1){0.20}}
\multiput(30.48,7.46)(-0.10,0.39){4}{\line(0,1){0.39}}
\put(30.07,9.00){\line(0,1){1.60}}
\multiput(30.03,10.60)(0.11,0.52){3}{\line(0,1){0.52}}
\multiput(30.34,12.16)(0.11,0.24){6}{\line(0,1){0.24}}
\multiput(31.01,13.61)(0.11,0.14){9}{\line(0,1){0.14}}
\multiput(31.98,14.88)(0.14,0.11){9}{\line(1,0){0.14}}
\multiput(33.22,15.89)(0.24,0.12){6}{\line(1,0){0.24}}
\multiput(34.65,16.59)(0.59,0.10){4}{\line(1,0){0.59}}
\put(36.00,3.00){\circle*{1.33}} \put(38.00,3.00){\circle*{1.33}}
\put(44.00,10.00){\circle*{1.33}} \put(37.00,17.00){\circle*{1.33}}
\put(30.00,10.00){\circle*{1.33}} \put(10.00,3.00){\vector(-1,1){6.00}}
\put(10.00,3.00){\vector(-1,1){5.00}} \put(3.00,10.00){\vector(1,1){6.00}}
\put(3.00,10.00){\vector(1,1){5.00}} \put(10.00,17.00){\vector(1,-1){6.00}}
\put(10.00,17.00){\vector(1,-1){5.00}} \put(17.00,10.00){\vector(-1,-1){6.00}}
\put(17.00,10.00){\vector(-1,-1){5.00}} \put(37.00,3.00){\vector(-1,1){6.00}}
\put(37.00,3.00){\vector(-1,1){5.00}} \put(30.00,10.00){\vector(1,0){13.00}}
\put(30.00,10.00){\vector(1,0){12.00}} \put(44.00,10.00){\vector(-1,-1){6.00}}
\put(44.00,10.00){\vector(-1,-1){5.00}}
\put(24.00,2.00){\makebox(0,0)[cc]{{\large a}}}
\multiput(70.00,17.00)(0.79,-0.09){2}{\line(1,0){0.79}}
\multiput(71.59,16.82)(0.30,-0.11){5}{\line(1,0){0.30}}
\multiput(73.09,16.28)(0.17,-0.11){8}{\line(1,0){0.17}}
\multiput(74.43,15.42)(0.11,-0.11){10}{\line(0,-1){0.11}}
\multiput(75.54,14.27)(0.12,-0.20){7}{\line(0,-1){0.20}}
\multiput(76.37,12.91)(0.10,-0.30){5}{\line(0,-1){0.30}}
\multiput(76.86,11.39)(0.07,-0.80){2}{\line(0,-1){0.80}}
\multiput(77.00,9.80)(-0.11,-0.79){2}{\line(0,-1){0.79}}
\multiput(76.77,8.22)(-0.12,-0.30){5}{\line(0,-1){0.30}}
\multiput(76.19,6.73)(-0.11,-0.16){8}{\line(0,-1){0.16}}
\multiput(75.29,5.42)(-0.13,-0.12){9}{\line(-1,0){0.13}}
\multiput(74.11,4.34)(-0.20,-0.11){7}{\line(-1,0){0.20}}
\multiput(72.72,3.55)(-0.38,-0.11){4}{\line(-1,0){0.38}}
\put(71.19,3.10){\line(-1,0){1.59}}
\multiput(69.60,3.01)(-0.52,0.09){3}{\line(-1,0){0.52}}
\multiput(68.03,3.28)(-0.25,0.10){6}{\line(-1,0){0.25}}
\multiput(66.56,3.90)(-0.16,0.12){8}{\line(-1,0){0.16}}
\multiput(65.27,4.84)(-0.12,0.13){9}{\line(0,1){0.13}}
\multiput(64.22,6.05)(-0.11,0.20){7}{\line(0,1){0.20}}
\multiput(63.48,7.46)(-0.10,0.39){4}{\line(0,1){0.39}}
\put(63.07,9.00){\line(0,1){1.60}}
\multiput(63.03,10.60)(0.11,0.52){3}{\line(0,1){0.52}}
\multiput(63.34,12.16)(0.11,0.24){6}{\line(0,1){0.24}}
\multiput(64.01,13.61)(0.11,0.14){9}{\line(0,1){0.14}}
\multiput(64.98,14.88)(0.14,0.11){9}{\line(1,0){0.14}}
\multiput(66.22,15.89)(0.24,0.12){6}{\line(1,0){0.24}}
\multiput(67.65,16.59)(0.59,0.10){4}{\line(1,0){0.59}}
\put(63.67,7.33){\circle*{1.33}} \put(76.33,7.33){\circle*{1.33}}
\put(63.67,12.67){\circle*{1.33}} \put(70.33,3.00){\circle*{1.33}}
\put(73.33,16.00){\circle*{1.33}} \put(75.00,14.67){\circle*{1.33}}
\multiput(75.00,14.67)(-0.67,-0.12){17}{\line(-1,0){0.67}}
\multiput(63.67,12.67)(0.12,-0.17){56}{\line(0,-1){0.17}}
\multiput(70.33,3.00)(0.12,0.30){39}{\line(0,1){0.30}}
\put(76.33,7.33){\line(-1,0){12.66}}
\multiput(63.67,7.33)(0.13,0.12){73}{\line(1,0){0.13}}
\multiput(73.33,16.00)(0.12,-0.33){26}{\line(0,-1){0.33}}
\multiput(100.00,17.00)(0.79,-0.09){2}{\line(1,0){0.79}}
\multiput(101.59,16.82)(0.30,-0.11){5}{\line(1,0){0.30}}
\multiput(103.09,16.28)(0.17,-0.11){8}{\line(1,0){0.17}}
\multiput(104.43,15.42)(0.11,-0.11){10}{\line(0,-1){0.11}}
\multiput(105.54,14.27)(0.12,-0.20){7}{\line(0,-1){0.20}}
\multiput(106.37,12.91)(0.10,-0.30){5}{\line(0,-1){0.30}}
\multiput(106.86,11.39)(0.07,-0.80){2}{\line(0,-1){0.80}}
\multiput(107.00,9.80)(-0.11,-0.79){2}{\line(0,-1){0.79}}
\multiput(106.77,8.22)(-0.12,-0.30){5}{\line(0,-1){0.30}}
\multiput(106.19,6.73)(-0.11,-0.16){8}{\line(0,-1){0.16}}
\multiput(105.29,5.42)(-0.13,-0.12){9}{\line(-1,0){0.13}}
\multiput(104.11,4.34)(-0.20,-0.11){7}{\line(-1,0){0.20}}
\multiput(102.72,3.55)(-0.38,-0.11){4}{\line(-1,0){0.38}}
\put(101.19,3.10){\line(-1,0){1.59}}
\multiput(99.60,3.01)(-0.52,0.09){3}{\line(-1,0){0.52}}
\multiput(98.03,3.28)(-0.25,0.10){6}{\line(-1,0){0.25}}
\multiput(96.56,3.90)(-0.16,0.12){8}{\line(-1,0){0.16}}
\multiput(95.27,4.84)(-0.12,0.13){9}{\line(0,1){0.13}}
\multiput(94.22,6.05)(-0.11,0.20){7}{\line(0,1){0.20}}
\multiput(93.48,7.46)(-0.10,0.39){4}{\line(0,1){0.39}}
\put(93.07,9.00){\line(0,1){1.60}}
\multiput(93.03,10.60)(0.11,0.52){3}{\line(0,1){0.52}}
\multiput(93.34,12.16)(0.11,0.24){6}{\line(0,1){0.24}}
\multiput(94.01,13.61)(0.11,0.14){9}{\line(0,1){0.14}}
\multiput(94.98,14.88)(0.14,0.11){9}{\line(1,0){0.14}}
\multiput(96.22,15.89)(0.24,0.12){6}{\line(1,0){0.24}}
\multiput(97.65,16.59)(0.59,0.10){4}{\line(1,0){0.59}}
\put(93.67,7.33){\circle*{1.33}} \put(106.33,7.33){\circle*{1.33}}
\put(93.67,12.67){\circle*{1.33}} \put(100.33,3.00){\circle*{1.33}}
\put(104.33,15.33){\circle*{1.33}}
\put(106.33,7.33){\line(-1,0){12.66}}
\multiput(93.67,12.67)(0.12,-0.17){56}{\line(0,-1){0.17}}
\put(84.67,2.00){\makebox(0,0)[cc]{{\large b}}}
\put(101.00,9.00){\makebox(0,0)[cc]{\small +}}
\put(97.00,10.00){\makebox(0,0)[cc]{{\small $-$}}}
\end{picture}
\end{center}
\caption{Boundaries of basic cycles of Fig.~\protect\ref{two}b and
Fig.~\protect\ref{md}} \label{boun3}
\end{figure}

\prop \label{mdcycle} The chain in $\Diamond_4,$ shown in Fig.~\ref{md}
($\equiv$ the right-hand picture in Fig.~\ref{two}a), defines a cycle in
$\Diamond_4 \sm \Diamond_3.$ \eprop

\begin{proof}
The $(3,3)$-cliques of this type {\Large $*$} can degenerate only in the
following way: some two neighboring points of {\em different} groups coincide,
thus forming a $(5)$-con\-fi\-gu\-ra\-tion, see the left picture of
Fig.~\ref{boun3}b. Given a $(5)$-con\-fi\-gu\-ra\-tion $J$, the $\J'$-block of
the type {\Large $*$} adjoins the corresponding complex $\Diamond(J)$ exactly 5
times, because such coincidence can happen at any of its 5 points. The boundary
position of complexes $\Diamond(J'),$ when $J' \in ${\Large $*$} tends to $J$
in the way shown in Fig.~\ref{boun3}b left, is (in the notation used in the
proof of Proposition \ref{three_hom}) equal to the difference of two
1-dimensional simplices in $\Diamond(J)$ denoted by two edges in
Fig.~\ref{boun3}b right. The sum of such differences over all 5 vertices of the
configuration $J$ is equal to 0, and Proposition \ref{mdcycle} is proved.
\end{proof}

\begin{remark}
A much more general fact was proved in \cite{Merx-2}: any "horizontal" boundary
operator $d^1$ of the auxiliary spectral sequence, corresponding to the
collision of two points of two different groups, always is trivial.
\end{remark}

Summarizing the Propositions \ref{classcomp4}--\ref{mdcycle}, we get the
following statement.

\thm The group $\bar H_{\Delta-1}(\Diamond_4 \sm \Diamond_3)$ is isomorphic to
$\Z$ and is generated by the fundamental cycle of the $\J$-block corresponding
to the $(3,3)$-cliques shown in Fig.~\ref{md}. \etheorem

As the $(\Delta-1)$- and $(\Delta-2)$-dimensional Borel--Moore homology groups
of both spaces $\Diamond_2 $ and $\Diamond_3 \sm \Diamond_2$ are trivial (see
Proposition \ref{si2} and Theorem \ref{s3}), this implies Theorem \ref{doodinv}
of the Introduction.

\subsection{A nontrivial doodle.}
\label{merxdood}

The theory of finite-order invariants provides a method of constructing a
priori nontrivial (and nonequivalent) objects. E.g., imagine that we do not
know any nontrivial knot in $\R^3$ and wish to construct it. To do it, we can
calculate the simplest finite-order knot invariant (given by the chord diagram
$\bigoplus$), then draw the simplest singular knot respecting this diagram
(i.e. having two transverse selfintersections), and then consider four knots
obtained from it by all possible local resolutions of both these points. At
least one of obtained knots surely will be nonequivalent to the others (and
indeed, if we do all this in the simplest possible way, we get three trivial
knots and one trefoil).

In exactly the same way, we can construct the simplest quasidoodle with two
generic triple points, respecting the triangular diagram of Fig.~\ref{md}.
Perturbing it in four different ways, we shall obtain three trivial (equivalent
to a circle) doodles, and one equivalent to Fig.~\ref{merx}a.

(However, A.~B.~Merkov, who discovered this doodle, came to it from very
different considerations.)

\section{Invariants of I-doodles}

In this and the next sections we consider only the immersed curves in $\R^2$.
To any immersion $\phi: S^1 \to \R^2$ there corresponds a map $S^1 \to S^1:$
any point $x \in S^1$ goes to the direction of the tangent vector $\phi'(x).$
Accordingly to S.~Smale \cite{Smale}, this correspondence is a homotopy
equivalence between spaces $I \K \equiv Imm(S^1, \R^2)$ and $C(S^1,S^1).$ These
spaces split into countably many components labeled by the "winding numbers"
(i.e. the indices of corresponding maps $S^1 \to S^1$). Any of these components
is homotopy equivalent to $S^1,$ the homotopy equivalence being provided by the
image of (the tangent direction of $\phi$ at) the distinguished point of $S^1$.

The discriminant $I\Sigma$ in the space $I \K$ is just the intersection of this
space $I \K$ with the discriminant set $\Sigma \subset \K$ considered in the
previous sections. Its resolution $I\Diamond$ is a subset in $\Diamond,$ namely
the complete preimage of $I\Sigma.$

In its decomposition into $\J$-blocks only the $J$-{\em configurations}, i.e.
the $\J$-cliques without multiple points, can take part.

\prop \label{smale} For any connected component $\CC$ of the space $I \K$ and
for any $A$-con\-fi\-gu\-ra\-tion $J$ in $S^1,$ the space of immersions $S^1
\to \R^2$ respecting this configuration and lying in this component is a {\em
path-connected} open submanifold of the space $\chi(J) \subset \K.$ \eprop

This follows easily from the Smale's theorem. \quad $\Box$
\medskip

For any component $\CC,$ denote by $\CC I \Diamond$ and $\CC I \Diamond_i$  the
intersection of the space $I \Diamond $ (respectively, $I \Diamond_i$) with the
preimage of $\CC$ under the projection $\Diamond \to \K$.

\begin{example} The stratum $I\Diamond_2$ is
an open subset in the space of a fiber bundle, almost coinciding with that
considered in Proposition \ref{si2}, with unique difference that its base is
not the entire space $\Psi =(S^1)^3/S(3),$ but its open part $B(S^1,3)$. The
{\em strangeness} is the linking number in $\CC$ with the direct image of the
fundamental cycle of this subset.  The existence of the strangeness as an
integer-valued invariant is due to the fact that this configuration space
$B(S^1,3)$ is orientable.
\end{example}

All other $(\Delta-1)$-dimensional blocks in all spaces $I\Diamond_3 \sm
I\Diamond_2 $ and $I\Diamond_4 \sm I\Diamond_3 $ are the open subsets of
similar blocks considered in the previous section; the corresponding virtual
generators of the group of invariants of I-doodles are shown in
Figs.~\ref{one}b, \ref{two}a and \ref{two}b.

All these generators define elements of corresponding groups $\bar
H_{\Delta-1}(\Diamond_i \sm \Diamond_{i-1}) \equiv E^1_{i,\Delta-i-1}$. For two
generators shown in Fig.~\ref{two}b this follows from the fact that the entire
boundaries of corresponding blocks in $I\Diamond_4 \sm I\Diamond_3 $ are empty.
For two remaining blocks of Fig.~\ref{two}a we should additionally check that
their boundaries in the block corresponding to the $(5)$-configurations are
trivial; the proof of this fact essentially coincides with that of Proposition
\ref{mdcycle}.

So, for any fixed component of the space $I \K$ the domain in the table
$\{E^1_{p,q}\}$ responsible for the calculation of invariants of orders 2, 3
and 4 of I-doodles from this component looks as is shown in Fig.~\ref{ssimm}.

\begin{figure}
\begin{center}
\unitlength=1.00mm \special{em:linewidth 0.4pt} \linethickness{0.4pt}
\begin{picture}(52.00,38.00)
\put(12.00,6.00){\vector(0,1){32.00}} \put(12.00,6.00){\vector(1,0){40.00}}
\put(49.00,3.00){\makebox(0,0)[cc]{$p$}}
\put(6.00,27.00){\makebox(0,0)[cc]{$\Delta-3$}}
\put(6.00,21.00){\makebox(0,0)[cc]{$\Delta-4$}}
\put(6.00,15.00){\makebox(0,0)[cc]{$\Delta-5$}}
\put(17.00,27.00){\makebox(0,0)[cc]{${\bf Z}$}}
\put(17.00,21.00){\makebox(0,0)[cc]{${\bf Z}$}}
\put(27.00,21.00){\makebox(0,0)[cc]{${\bf Z}$}}
\put(27.00,15.00){\makebox(0,0)[cc]{${\bf Z}$}}
\put(37.00,15.00){\makebox(0,0)[cc]{${\bf Z}^5$}}
\put(9.00,35.00){\makebox(0,0)[cc]{$q$}}
\put(17.00,15.00){\makebox(0,0)[cc]{0}} \put(27.00,9.00){\makebox(0,0)[cc]{0}}
\put(27.00,27.00){\makebox(0,0)[cc]{0}} \put(37.00,21.00){\makebox(0,0)[cc]{0}}
\put(17.00,33.00){\makebox(0,0)[cc]{0}} \put(17.00,3.00){\makebox(0,0)[cc]{2}}
\put(27.00,3.00){\makebox(0,0)[cc]{3}} \put(37.00,3.00){\makebox(0,0)[cc]{4}}
\put(12.00,12.00){\line(1,0){33.00}} \put(45.00,18.00){\line(-1,0){33.00}}
\put(12.00,24.00){\line(1,0){33.00}} \put(45.00,30.00){\line(-1,0){33.00}}
\put(22.00,31.00){\line(0,-1){25.00}} \put(32.00,6.00){\line(0,1){25.00}}
\put(42.00,31.00){\line(0,-1){25.00}}
\end{picture}
\end{center}
\caption{Spectral sequence for invariants of orders 2, 3 and 4} \label{ssimm}
\end{figure}

\prop \label{degenss} For any connected component $\CC$ of the space $I \K$,
the fragment of the spectral sequence shown in Fig.~\ref{ssimm} degenerates at
the term $E^1,$ i.e. all its elements extend to well defined Borel--Moore
homology classes of the space $\CC I\Diamond.$ \eprop

\begin{proof}
For the group  $E_{2, \Delta-3}$ this is obvious.

The group $E^1_{2,\Delta-4}$ is generated by the fundamental cycle of the
submanifold in $\CC I\Diamond_2 $, consisting of such pairs of the form \{a
3-configuration $(x,y,z) \in B(S^1,3)$; a map $\phi:S^1 \to \R^2$\} that $x+y+z
\equiv 0 (mod \, 2\pi).$ It is obviously a cycle in entire $\CC I\Diamond $,
let us prove that it is not homologous to zero. As $H^2(\CC) \simeq 0,$ it is
sufficient to construct two 1-dimensional cycles in $\CC \sm I\Sigma,$ defining
the same element in $H^1(\CC)$ but such that some (and then any) 2-chain
realizing the homology between these cycles has nonzero intersection number
with this fundamental cycle.

Consider a map $\phi \in I\Sigma \cap \CC$ with unique generic triple point,
and let $\phi_1, \phi_2$ be two its small nondiscriminant perturbations
resolving this triple point in two different ways, see Fig.~\ref{reidem}c.

For any $i=1,2,$ denote by $[\phi_i]$ the 1-cycle in $\CC \sm \Sigma$ swept out
by all maps obtained from $\phi_i$ by all cyclic reparametrizations of the
issue circle $S^1.$ These two cycles are obviously homologous in $\CC,$ and the
intersection index of such a homology with the above manifold is equal to $\pm
3.$

This proves the assertion of Proposition for cells $E_{2,\Delta-4}$ and
$E_{3,\Delta-4}$.

In particular, there exist two elements of the group $H_0(\CC \sm \Sigma)$,
i.e. two linear combinations of doodles in $\CC$, which cannot be distinguished
by the "strangeness" (generating the group $E^\infty_{2,\Delta-3}$ of second
order 0-cohomology classes), but can be distinguished by the invariant
generating the group $E^\infty_{3,\Delta-4}$. (In accordance with
\S~\ref{merxdood}, we can find these combinations by resolving unique point of
multiplicity 4. Indeed, the linear combination of four doodles, locally
situated as in Fig.~\ref{diffour} and coinciding outside it, provides such a
chain.)

\begin{figure}
\unitlength 0.90mm \linethickness{0.4pt}
\begin{picture}(131.00,22.00)
\put(10.00,10.00){\vector(1,0){20.00}} \put(20.00,0.00){\vector(0,1){20.00}}
\put(7.00,3.00){\vector(1,1){19.00}} \put(42.00,10.00){\vector(1,0){20.00}}
\put(52.00,0.00){\vector(0,1){20.00}} \put(39.00,3.00){\vector(1,1){19.00}}
\put(29.00,3.00){\vector(-1,1){18.00}} \put(59.00,1.00){\vector(-1,1){19.00}}
\put(74.00,10.00){\vector(1,0){20.00}} \put(84.00,0.00){\vector(0,1){20.00}}
\put(106.00,10.00){\vector(1,0){20.00}} \put(116.00,0.00){\vector(0,1){20.00}}
\put(93.00,3.00){\vector(-1,1){18.00}} \put(123.00,1.00){\vector(-1,1){19.00}}
\put(80.00,0.00){\vector(1,1){19.00}} \put(112.00,0.00){\vector(1,1){19.00}}
\put(101.00,10.00){\makebox(0,0)[cc]{{\large $+$}}}
\put(68.00,10.00){\makebox(0,0)[cc]{{\large $-$}}}
\put(36.00,10.00){\makebox(0,0)[cc]{{\large $-$}}}
\put(4.00,10.00){\makebox(0,0)[cc]{{\large $+$}}}
\end{picture}
\caption{The chain of I-doodles nontrivial by the invariant of order 3}
\label{diffour}
\end{figure}
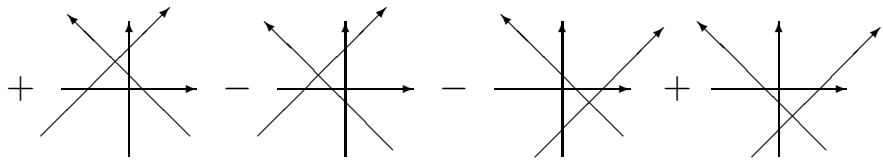

Exactly as above we produce from them an 1-cycle in $\CC \sm \Sigma$,
homologous to zero in $\CC$, and having a nonzero intersection index with the
chain generating the group $E^1_{3,\Delta-5}.$ This proves our Proposition.
\end{proof}

In particular, we have proved that for any component $\CC$ of the space $I \K$
all 7 generators mentioned in Theorem \ref{idoodinv} and shown in
Figs.~\ref{one}, \ref{two} define independent elements of the group $\bar
H_{\Delta-1}(\Sigma \cap \CC).$

Finally, it is obvious that the intersection indices with all these
$(\Delta-1)$-di\-men\-si\-o\-nal Borel--Moore homology classes of the
discriminant define zero elements in the 1-dimensional cohomology group of the
component $\CC$, and hence the linking numbers with them are well defined
invariants of I-doodles.

Theorem \ref{idoodinv} is thus completely proved.

\section{1-dimensional cohomology of the space of immersions
$S^1 \to \R^2$ without points of multiplicity 4}

Define the discriminant $I \Sigma 4 \subset I \K$ as the set of immersions
$\phi: S^1 \to \R^2$ such that images of some 4 different points coincide.

Its resolution $I \Diamond 4$
 is constructed in essentially the same
way as it was done above for the set $I\Sigma.$ In this section we are
interested in the 1-dimensional cohomology classes of the space $I\K \setminus
I\Sigma 4$ or, which is the same, in the $(\Delta-2)$-dimensional Borel--Moore
homology classes of spaces $I\Sigma 4$ or $I\Diamond 4.$

The first nonempty term of this resolution is of filtration 3. This is the
space of an orientable $(\Delta-6)$-dimensional vector bundle over the
configuration space $B(S^1,4).$ This configuration space is non-orientable,
therefore the fundamental cycle of this term defines a class only in the group
$\bar H_{\Delta-2}(I \Sigma_4, \Z_2),$ but not in the integer homology group,
see in Theorem \ref{4ptfree} the statement "$\F_3 \sim \Z_2$ over $\Z_2$".
\medskip

The next term $I {\Diamond 4}_4 \sm I {\Diamond 4}_3$ of our filtration also is
the space of a fiber bundle, whose base is the configuration space $B(S^1,5),$
and the fibers are direct products of stars $\star$ with 5 rays (without
endpoints, which belong to the smaller term of the filtration) and some
(canonically oriented) $(\Delta-8)$-dimensional vector subspaces in $\K$.

The base $B(S^1,5)$ of this bundle is orientable (and diffeomorphic to $S^1
\times \R^4$), and the monodromy over the circle generating the group
$\pi_1(B(S^1,5)) \sim \Z$ acts on the fibration of 5-stars $\star$ by cyclic
permutations of their rays.

Therefore the $(\Delta-2)$-dimensional Borel--Moore homology group of this term
coincides with the subgroup of the group $\bar H_1(\star)$ consisting of
elements invariant under the rotations of these stars. For any coefficient
group $G$ this group is isomorphic to $G^4$. If in $G$ the condition $5a=0$
implies $a=0,$ then its invariant subgroup is trivial; in the case $G=\Z_5$
this group is isomorphic to $\Z_5,$ see statement "$\F_4 \simeq \Z_5$" over
$\Z_5$ of Theorem \ref{4ptfree}.

Finally, consider the term $I{\Diamond 4}_5 \sm I{\Diamond 4}_4$ of our
filtration. It is the space of a fiber bundle over $B(S^1,6),$ whose fiber is
the product of $\R^{\Delta-10}$ and some two-dimensional order complex. This
complex is similar to the one considered in the proof of Proposition
\ref{three_hom}, with unique difference: its vertices correspond to choices of
some 4, 5 or 6 points of our 6, and not of 3, 4 or 5 points of 5. Absolutely as
previously, the two-dimensional cycles of this complex are the linear
combinations of (double) arrows with starts and ends at these 6 points, forming
the cycles (in the usual sense) of the complete graph on these 6 vertices.

However, unlike the case of 5-configurations, the base space $B(S^1,6)$ is
non-orientable. Therefore the $(\Delta-2)$-dimensional Borel--Moore homology
classes of our block are in a one-to-one correspondence with such cycles of the
complete 6-graph, which are {\em anti-invariant} under the cyclic permutations
of its 6 vertices.

The group of such cycles can be easily calculated and is isomorphic to $\Z^2$;
the pictures of its generators are given in Fig.~\ref{hexagon}.

By dimensional reasons, these homology classes of the space $I{\Diamond 4}_5
\sm I{\Diamond 4}_4$ can be extended to these of the space $I{\Diamond 4}_5,$
and hence to the 1-dimensional cohomology classes (of order 5) of the entire
space of immersions $S^1 \to \R^2$ without 4-fold points.

This proves Theorem \ref{4ptfree} of the Introduction.

\end{document}